\documentclass[10pt]{article}
\usepackage{amssymb, amsmath, amsthm, hyperref, color, geometry, fancyhdr, mathtools, enumerate, scalerel, color, accents, fancybox, rotating, colortbl, stmaryrd,  cleveref, setspace, tikz, tikz-3dplot, graphics, comment}

  \usepackage{mathabx}
\usepackage[all,2cell]{xy}
   \UseAllTwocells
   \SilentMatrices
\usepackage[dvips]{epsfig}

\newcommand{\bbullet}{
\begin{tikzpicture}
  \draw[fill=black] circle (0.55ex);
\end{tikzpicture} 
}

\usetikzlibrary{arrows, decorations.markings, decorations, patterns,
positioning, decorations.pathreplacing, decorations.pathmorphing,
intersections, fit}
\tikzset{->-/.style={decoration={markings, mark=at position .5 with {\arrow{>}}},postaction={decorate}}}
\tikzset{-<-/.style={decoration={markings, mark=at position .5 with {\arrow{<}}},postaction={decorate}}}

\usetikzlibrary{arrows, decorations.markings, decorations, patterns,
positioning, decorations.pathreplacing, decorations.pathmorphing, external}
\tikzset{->-/.style={decoration={markings, mark=at position .5 with {\arrow{>}}},postaction={decorate}}}
\tikzset{-<-/.style={decoration={markings, mark=at position .5 with {\arrow{<}}},postaction={decorate}}}

\newcommand{\projective}[1]{\ensuremath{P_{#1}}}

\newcommand{\Kh}{\ensuremath{\mathrm{Kh}}}
\newcommand{\CKh}{\ensuremath{\mathrm{CKh}}}
\newcommand{\SKh}{\ensuremath{\mathrm{SKh}}}
\newcommand{\VKh}{\ensuremath{\mathrm{VKh}}}
\newcommand{\PKh}{\ensuremath{\mathrm{PKh}}}
\newcommand{\TKh}{\ensuremath{\mathrm{TKh}}}
\newcommand{\imagesfolder}{.}
\newcommand{\NB}[1]{\ensuremath{\vcenter{\hbox{#1}}}}

\newcommand{\mapH}{\ensuremath{\eta}}

\newcommand{\jw}{\mathtt{V}}
\newcommand{\pjw}{\mathtt{P}}
\newcommand{\hjw}{\mathtt{H}}

\newcommand{\hl}{\ensuremath{\mathit{h}}}
\newcommand{\el}{\ensuremath{\mathit{e}}}
\newcommand{\fl}{\ensuremath{\mathit{f}}}

\newcommand{\tikztilting}[5]{
  \begin{scope}[xshift=#1 cm, yshift=#2 cm, xscale=#3, yscale=#4]
    \filldraw[fill=pink, draw = black] (-1.5, -0.6) rectangle (1.5, 0.6) coordinate[pos=0.5] (A);
    \begin{scope}
      \foreach \i in {-0.5, -0.4, ..., 0.6}{
      \draw[thin] (-1.5, \i) -- ( -1.3, \i );
      }
\end{scope}
    \node at (A) {#5};
\end{scope}}

\normalfont\upshape

\theoremstyle{definition}
\newtheorem{thm}{Theorem}[section]
\newtheorem{cor}[thm]{Corollary}

\newtheorem{lem}[thm]{Lemma}
\newtheorem{rem}[thm]{Remark}

\newtheorem{defn}[thm]{Definition}
\newtheorem{example}[thm]{Example}

\newtheorem*{thm*}{Theorem}

\numberwithin{equation}{section}

\def\N{{\mathbb N}}

\def\Z{{\mathbb Z}}

\renewcommand{\to}{\rightarrow}

\newcommand{\id}{{\rm id}}

\newcommand{\End}{{\rm End}}

\def\dif{\partial}

\def\lra{{\longrightarrow}}
   
\def\dmod{{\mathrm{\mbox{\textrm{-}}mod}}}

\def\Id{\mathrm{Id}}
\def\mc{\mathcal}

        \def\shuffle{\,\raise 1pt\hbox{$\scriptscriptstyle\cup{\mskip
               -4mu}\cup$}\,}
\def\udmod{{\mathrm{\textrm{-}\underline{mod}}}}

\newcommand{\refequal}[1]{\xy {\ar@{=}^{#1}
(-1,0)*{};(1,0)*{}};
\endxy}

\newcommand{\mH}{\mathrm{H}} 
\newcommand{\Ul}{{\mathsf U}}

\newcommand{\ul}{{\mathsf u}}

\newcommand{\dotnewtoni}[1][i]{\ensuremath{\textcolor[rgb]{0,0,0.6}{\bbullet_{#1}}}}

\DeclareMathOperator{\Dist}{Dist}

\newcommand{\TL}{\mathsf{TL}}

\DeclareMathOperator{\tl}{tl}

\renewcommand{\bar}[1]{\overline{#1}}

\def\JS#1{\textcolor[rgb]{0.40,0.00,0.90}{\bf [JS: #1]}}

\title{Colored Khovanov homology theories at prime roots of unity}
\author{You Qi, Louis-Hadrien Robert, Joshua Sussan and Emmanuel Wagner}

 \hypersetup{
    pdftoolbar=true,        
    pdfmenubar=true,        
    pdffitwindow=false,     
    pdfstartview={FitH},    
    pdftitle={On some colored Khovanov homology theories at prime
roots of unity},    
    pdfauthor={You Qi, Louis-Hadrien Robert, Joshua Sussan and Emmanuel Wagner},     
    pdfsubject={On some colored Khovanov homology theories at prime
roots of unity},   
    pdfcreator={You Qi, Louis-Hadrien Robert, Joshua Sussan and Emmanuel Wagner},   
    pdfproducer={You Qi, Louis-Hadrien Robert, Joshua Sussan and Emmanuel Wagner}, 
    pdfkeywords={}, 
    pdfnewwindow=true,      
    colorlinks=true,       
    linkcolor=darkgray,          
    citecolor=teal,        
    filecolor=magenta,      
    urlcolor=violet,          
    linkbordercolor=red,
    citebordercolor=teal,
    urlbordercolor=violet,  
    linktocpage=true
    }
\date{\today}

\begin{document}

\maketitle

\begin{abstract}
We equip Khovanov's colored $\mathfrak{gl}(2)$-homology groups with a degree-2 derivation. Over a field of characteristic $p>2$, the derivation specializes to a $p$-nilpotent differential, which leads to a categorification of colored $\mathfrak{gl}(2)$-homology at a $2p$th root of unity.
\end{abstract}

\setcounter{tocdepth}{2} \tableofcontents

\section{Introduction}
Khovanov homology \cite{KhJones} categorifies the Jones polynomial where the quantum parameter is assumed to be generic.  This was extended to a categorification of the colored Jones polynomial in \cite{Khcolored}.
Beliakova and Wehrli \cite{BW} showed that this construction could be done over $\mathbb{Z}$ (with 2 inverted).  Von Merkl \cite{VM} showed that various constructions in \cite{Khcolored} are isomorphic. These categorifications all yield finite-dimensional homology theories.  We will not discuss various interesting categorifications of the colored Jones polynomial which have infinite-dimensional homologies.

The WRT 3-manifold invariant may be defined as a sum of colored Jones polynomials evaluated at a root of unity. 
An approach to categorifying this invariant was introduced by Khovanov \cite{Hopforoots} who suggested looking for $p$-differentials on algebraic structures defined over a field of characteristic $p$.  These are derivations $\dif$ such that $\dif^p=0$.

In \cite{QRSW2, QRSW3, QRSW4}, the authors studied $\mathfrak{sl}(2)$-actions on equivariant Khovanov--Rozansky homology.  Working over a field of characteristic $p$, one obtains a $p$-differential $\dif=f$ where $f$ is a generator of $\mathfrak{sl}(2)$.

In this paper we show the following. 
\begin{thm*}
    Colored Khovanov homology (colored by tilting modules over the distribution algebra for $\mathfrak{sl}_2$ in characteristic $p$) may be equipped with a $p$-differential yielding a categorification of the colored Jones polynomial at a $2p$th root of unity.  In particular, when the colors of the components are less than $p$, we obtain a categorification of the standard colored Jones polynomial at $2p$th roots of unity. This colored homology theory is functorial.
\end{thm*}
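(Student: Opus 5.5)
The plan follows Khovanov's cabling-and-projector construction of colored $\mathfrak{gl}(2)$-homology, with every ingredient made compatible with a degree-2 derivation $\dif$. The first step is the uncolored case. I will use the equivariant $\mathfrak{gl}(2)$ Frobenius algebra $A=\Bbbk[x_1,x_2]$ and its module $\Bbbk[x]$ over the symmetric polynomials. The derivation is the $\mathfrak{sl}(2)$ generator $\dif = \sum x_i^2\,\partial/\partial x_i$, acting on the foam / $\mathfrak{gl}(2)$-web category as in \cite{QRSW2,QRSW3,QRSW4}. Every foam generator (saddle, cup, cap, dot) must commute with $\dif$ up to a twist by a polynomial. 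I will fix these twists by modifying the $\dif$-action on each web by a $1$-cocycle, so that saddles and the crossing maps become $\dif$-equivariant. Then the Khovanov complex of a tangle becomes a complex of $p$-DG modules over the $p$-DG algebra $(H^n,\dif)$, where $\dif^p=0$ holds because $\mathrm{char}\,\Bbbk=p$ and $\dif$ acts as a derivation. Invariance under the Reidemeister moves will be proved by checking that the Gaussian-elimination homotopy equivalences are $\dif$-equivariant, or at least equivariant up to $\dif$-contractible pieces. These are relative homotopy equivalences in the sense of $p$-DG module categories.

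The second step is coloring. For a component colored by a tilting module $T$ of the distribution algebra $\Dist(\mathfrak{sl}_2)$, I write $T$ as a summand of $V^{\otimes n}$. Here the idempotent comes from $\End(V^{\otimes n})\cong \TL_n$ at $q=1$ in characteristic $p$. Khovanov's colored theory cables each component $n$ times and inserts a categorified idempotent. In our setting this is a complex built from the categorified Jones--Wenzl-type projector $\pjw$ associated to $T$. The key lemma is that this projector complex can be chosen $\dif$-stable. Concretely, it should be an idempotent $p$-DG bimodule complex over $H^n$, or a cone of $\dif$-equivariant maps between cabled Khovanov complexes. When the colors are less than $p$, $T$ is the Weyl module $\jw_n$ and the idempotent is the usual Jones--Wenzl one, which has no denominators divisible by $p$. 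For larger colors I will build the projector inductively from the tilting tensor-product decomposition $T\otimes V\cong T'\oplus T''$, cabling one strand at a time.

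The third step is decategorification. I will take the compact derived category of $p$-DG modules and its Grothendieck group, where $K_0$ of $p$-complexes over $\Bbbk$ is $\mathbb{Z}[q]/(\Phi_{2p}(q))$ since $\dif$ has degree $2$. Then I check that the class of the colored complex equals the colored Jones polynomial evaluated at a primitive $2p$th root of unity. For colors $<p$ this is the standard colored Jones polynomial; in general it is the version colored by the tilting module. Functoriality will follow by extending the $\dif$-equivariant foam maps to movie moves, as in the uncolored equivariant theory, and then composing with the $\dif$-stable projectors.

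I expect the main obstacle to be the second step: proving that the colored projectors (especially the tilting ones with colors $\ge p$) admit $\dif$-equivariant models whose idempotence holds up to relative homotopy. My first attempt will be to realize the projector as the image of the corresponding $\Dist(\mathfrak{sl}_2)$-idempotent under a $\dif$-equivariant action of the symmetric group or nil-Hecke algebra on cabled webs. I will then check that the twisting cocycles are invariant under this action.
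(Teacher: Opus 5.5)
Your uncolored step matches the paper, which imports the $\mathfrak{sl}(2)$-action of \cite{QRSW3} with equivariant parameters set to zero. The genuine gap is in Step 2, where you leave the key lemma open (a $\dif$-stable ``categorified projector'').

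The route you sketch there is harder than necessary and ill-suited to your Step 3. Cooper--Krushkal/Rozansky-type projector complexes are unbounded, already for colors $<p$. The resulting colored complexes would therefore not lie in $\mc{D}^b(H,d)$, where Rickard's equivalence and $\chi_{\mathbb{O}}$ are defined. For colors $\ge p$, there is moreover no characteristic-$p$ Jones--Wenzl projector to categorify.

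Your fallback through a symmetric-group action also runs into trouble. The Grigsby--Licata--Wehrli $S_{\bf n}$-action exists only on homology, which is exactly why the paper's $\SKh$ is defined only on homology and only for $n_i<p$. In addition, for $n\ge p$ the tilting idempotent is not a symmetrizer.

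The missing idea is that no categorified projector is needed.
\begin{itemize}
\item The tilting idempotent $E_n\in\End_{\TL_{\Bbbk,-1}}(n)$ is the $p$-Jones--Wenzl idempotent. At $\xi=-1$ a circle evaluates to $2$, matching the torus in Khovanov's TQFT.
\item $E_n$ is itself a $\Bbbk$-linear combination of undotted annular cobordisms $E_n\times\mathbb{S}^1$, or $E_n(K)$ along a knotted component. These act on the \emph{bounded} Khovanov complex of the anti-parallel cable $L_{//}$.
\item There $E_n$ is an idempotent in the homotopy category. Its image exists, uniquely up to homotopy, by Balmer--Schlichting idempotent completeness of $\mc{C}^b$.
\item Compatibility with the $p$-differential is then automatic rather than an obstacle. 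Since $\dif_c$ annihilates cup, cap and saddle foams, every such cobordism map commutes with $\dif_c$, so $E_{\bf n}$ passes to the $p$-DG setting.
\item For $n_i<p$ one simply has $E_n=\jw_n$.
\end{itemize}

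Two smaller points.

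First, $K_0(H\udmod)$ is $\mathbb{O}_p=\Z[q,q^{-1}]/(1+q^2+\cdots+q^{2(p-1)})$, i.e.\ the quotient by $\Phi_p(q)\Phi_{2p}(q)$, not $\Z[q]/(\Phi_{2p}(q))$. Also, since your objects are chain complexes of $H$-modules, $\chi_{\mathbb{O}}$ should go through the relative homotopy category and Rickard's equivalence $H\udmod\simeq\mc{D}^b(H,d)/\mc{P}_H$, not a ``compact derived category of $p$-DG modules''.

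Second, for functoriality, ``composing with the projectors'' is not enough. One sets $\TKh(\Sigma)=E_{\bf n^2}\circ\CKh(\Sigma_{//})\circ E_{\bf n^1}$ and pushes idempotents to the ends of each movie. This fails for MM11, where an idempotent sits on a circle that is born inside the movie and cannot be pushed to either end. There you need a separate observation:
\begin{itemize}
\item every non-identity term of $E_n$ contains a $\cap\times\mathbb{S}^1$;
\item composed with the births, such a term produces an undotted sphere and so vanishes;
\item hence the idempotent acts as the identity in that movie.
\end{itemize}
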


An approach to a categorification of the (colored) Jones polynomial using $\mathfrak{gl}(-2)$-homology was constructed in \cite{QiSussanLink, QRSW1} and is orthogonal to this work.

\subsubsection{Acknowledgements}
The authors are grateful to Alexis Guérin, Mikhail Khovanov, Felix Roz, and Paul Wedrich for helpful comments.

Y.~Q. is partially supported by the Simons Foundation and the National Science Foundation (DMS-2401376).  J.~S.{} is
partially supported by the NSF grant DMS-2401375 and a Simons Foundation Travel Support Grant. E.~W.{} is partially supported by the ANR project Caget (ANR-25-CE40-4162). 

This material is based upon work supported by the National Science Foundation under Grant No. DMS-1929284 while the author was in residence at the Institute for Computational and Experimental Research in Mathematics in Providence, RI, during the ``Categorification of WRT invariants of 3-manifolds" Collaborate@ICERM program.

The authors used GPT-5.6 Sol to proofread the manuscript.

\section{Background}

\subsection{\texorpdfstring{$p$}{p}-DG  theory} \label{sec-DG-Frob} 
For the sake of completeness, we review the necessary $p$-DG theory for link homology in this section.  See \cite{QRSW1} for more details.

Let $\Bbbk$ be a field of characteristic $p>2$. An unadorned tensor product ``$\otimes$'' will be understood to be taken over $\Bbbk$.

Let $H = \Bbbk[\dif]/(\dif^p)$ be the graded truncated polynomial algebra generated
by a degree $2$ generator $\dif$. We endow $H$ with a graded Hopf algebra structure by defining comultiplication $\Delta:H\to H\otimes H$, counit $\epsilon: H \to \Bbbk$ and antipode $S: H\to H$ by the following formulas:
\begin{equation}\label{eqn-Hopf-algebra-H-prime}
    \Delta (\dif) = \dif \otimes 1 + 1\otimes \dif, \qquad 
    \epsilon(\dif)=0, \qquad \text{and}\qquad
    S(\dif)=-\dif.
  \end{equation}

Since $H$ is a graded Hopf algebra, there is a natural grading-reversing duality on $H$-modules, which we denote by $(\mbox{-})^*$. Given an $H$-module $M$ and $\dif \in H$, $\dif$ acts on the dual vector space $M^*$ of $M$ by
\begin{equation}\label{eqn-dual-rep}
(\dif \cdot f)(v):=-f(\dif \cdot v),
\end{equation}
for any $f\in M^*$ and $v\in M$.

\subsubsection{$p$-Complexes} Since $H$  is graded
commutative and cocommutative, its category of graded modules, denoted
$H\dmod$, is a symmetric monoidal abelian
category, where the monoidal structure is given by the usual vector
space tensor product $\otimes$. 
A graded module over $H$ will be referred to as a \emph{$p$-complex}
since, when $p=2$, such an object reduces to the usual notion of a
(cohomological) complex in characteristic two.

Note that $H$ being a $\Bbbk$-vector space, any $p$-complex is in
particular a $\Bbbk$-vector space. Hence when dealing with $p$-complexes, we
are working in characteristic $p$.

Graded $H$-modules constitute a Krull--Schmidt category, with
indecomposable $p$-complexes over $\Bbbk$ classified as follows. Set
\begin{equation}
  U_i:=H/(\dif^{i+1}) ,\quad \quad 0\leq i \leq p-1,
\end{equation}
so that $U_i$ has dimension $i+1$. Given any $a\in \Z$, denote by
$q^aU_i$ the module $U_i$ with grading shifted up by $a$. Then
$q^aU_i$ is concentrated in degrees $a, a+2,\dots, a+2i$:
\begin{equation}
q^aU_i=\left(
    \NB{\tikz[scale = 1.5 ]{
        \node (Fa) at (0,0) {$\overset{a}{\Bbbk}$};
        \node (Fa2) at (1,0) {$\overset{a+2}{\Bbbk}$};
        \node (dots) at (2,0) {$\cdots$};
        \node (Fa2i) at (3,0) {$\overset{a+2i}{\Bbbk}$};
        \draw[double equal sign distance] (Fa) -- (Fa2);
        \draw[double equal sign distance] (Fa2) -- (dots);
        \draw[double equal sign distance] (dots) -- (Fa2i);
      }}\right) \ .
\end{equation}
The objects $q^a U_i$, $a\in \Z$ and $i\in \{0,\dots, p-1\}$
constitute a full list of indecomposable graded $H$-modules.

As a finite-dimensional Hopf algebra, $H$ is graded Frobenius
\cite{LaSw}, meaning that the full subcategory of graded injective
modules in $H\dmod$ coincides with that of graded projectives. The
\emph{(graded) stable module category} $H\udmod$ is the quotient of
$H\udmod$ by the class of graded projective-injective modules. It is a
triangulated category \cite[Theorem 2.6]{Hap88}. For any graded
$H$-module $M$, the \emph{homological shift functor} $t$ is defined
by taking a graded injective embedding $\jmath_M$:
\begin{equation}
  0 \lra M \stackrel{\jmath_M}{\lra} I_M \lra \mathrm{Coker}(\jmath_M) \lra 0, 
\end{equation}
and declaring $tM:=\mathrm{Coker}(\jmath_M)$. The inverse functor
$t^{-1}$ is constructed by taking a graded projective cover $\rho_M$:
\begin{equation}
  0 \lra \mathrm{Ker}(\rho_M) \lra P_M \stackrel{\rho_M}{\lra} M \lra 0,
\end{equation}
and setting $t^{-1}M:=\mathrm{Ker}(\rho_M)$.

The symmetric monoidal structure on $H\dmod$ descends to an exact
symmetric monoidal structure on $H\udmod$, which we still denote by
$\otimes_{\Bbbk}$.
This follows from the fact that, given an $H$-module $M$ of
dimension $n$, $H\otimes_{\Bbbk} M$ is free of rank $n$ (see, for instance,
\cite[Proposition 2.1]{QYHopf} for details).

Similarly, the category $H\udmod$ inherits exact degree shift
functors from $H\dmod$, which we will still write as $q^i(\mbox{-})$ for any
$i \in \Z$. One has functorial-in-$M$ isomorphisms
\begin{equation}
  q^iM \cong M\otimes (q^i\Bbbk) \cong (q^i\Bbbk) \otimes M.
\end{equation}

Let $U_{p-2}$ be the $(p-1)$-dimensional $p$-complex which is graded
isomorphic to $H/(\dif^{p-1})$. Then there is an isomorphism
\begin{equation}\label{eqn-V-p-2}
  tM \cong q^{2-2p}U_{p-2}\otimes M,
\end{equation}
functorial in $M$. This is because, as alluded to above, $H\otimes M$
is free of rank $n$, and thus the embedding $M \lra H\otimes M$,
$m\mapsto \dif^{p-1}\otimes m$ can be used as $\jmath_M$. Furthermore,
this choice of $\jmath_M$ is clearly functorial.

Applying the shift functor $t$ twice to the module $\Bbbk$ shows that
\begin{equation}
  t^2\Bbbk\cong q^{-2p} \Bbbk,
\end{equation}
and thus the functor isomorphism
$t^2(\mbox{-})\cong q^{-2p}(\mbox{-})$. This follows from the easily
verified
fact that the tensor product $q^{4-2p}U_{p-2}\otimes U_{p-2}$ decomposes as $U_0 \oplus \bigoplus_{i=0}^{p-3} q^{4-2p+ 2i}H$.

\subsubsection{Slash cohomology} Associated to a $p$-complex $M$, one can
define its {\sl slash cohomology} as in \cite[Section 2.1]{KQ}. This
is an analogue of the usual homology functor for chain complexes.

For each $0\le k \le p-2$, form the graded vector space
\[ \mH_{/k}(M) = \dfrac{\mathrm{Ker}(\dif^{k+1}_M)}{\mathrm{Im}
  (\dif^{p-k-1}_M)+\mathrm{Ker} (\dif^{k}_M)}.\] The original
$\Z$-grading on $M$ gives a decomposition
\[ \mH^{\bullet}_{/k}(M) = \bigoplus_{i\in \Z} \mH^{ i }_{/k}(M) .\]
 Define the {\sl
slash cohomology} of $M$ as
$\displaystyle{ \mH^{\bullet }_{/}(M) =  \bigoplus_{k=0}^{p-2} \mH^{\bullet }_{/k}(M)}$  and
let
$\displaystyle{\mH^{i}_/(M) :=
  \bigoplus_{k=0}^{p-2}\mH_{/k}^{i}(M)}$. We have the decompositions
\begin{equation}
  \mH^{\bullet}_/(M) = \bigoplus_{i\in \Z} \mH^{i}_/(M)  = \bigoplus_{k=0}^{p-2}
  \mH^{\bullet}_{/k}(M)=\bigoplus_{i\in \Z}\bigoplus_{k=0}^{p-2} \mH_{/k}^{ i}(M).
\end{equation}
The differential $\dif_M$ induces an operator also denoted $\dif_M$, of bidegree $(-1,2)$ on 
the bigraded $\Bbbk$-vector space $\mH_/^{\bullet}(M)$.

Forgetting the $k$-grading gives us a graded vector space
$\mH^{\bullet}_/(M)$ with differential $\dif_M$, which we can also
view as a graded $H$-module.  There is a decomposition
\begin{equation}\label{eqn-decomp-of-M-into-cohomology-and-free}
  M\cong \mH_/^{\bullet}(M)\oplus P(M)
\end{equation}
in the abelian category of $H$-modules, where $P(M)$ is a maximal
projective direct summand of $M$. It follows that $\mH_/^{\bullet}(M)$
is isomorphic to $M$ in the stable category $H\udmod$.

In particular, we have
\begin{equation}
  \mH_/^\bullet(q^aU_i) = 
  \begin{cases}
    q^aU_i , & a\in \Z, \quad i=0,\dots, p-2, \\
    0 & i=p-1 .
  \end{cases}
\end{equation}
The slash cohomology group $\mH_/^{\bullet}(M)$, viewed as an
$H$-module, does not contain any direct summand isomorphic to a graded
free $H$-module.

The assignment $M\mapsto \mH_{/}^\bullet(M)$ is functorial in $M$ and
can be viewed as a functor $H\dmod \lra H\udmod$ or as a functor
$H\udmod \lra H\udmod$. The latter functor is then isomorphic to the
identity functor.

\subsubsection{A theorem of Rickard} For later use, we will record an
alternative description of $H\udmod$ that is due to Rickard \cite{Ri}.

For a graded algebra $A$, we will let $\mc{C}(A,d)$ stand for the
usual graded homotopy category of chain complexes ($d^2=0$) of
$A$-modules. Likewise, we let $\mc{D}(A,d)$ stand for the usual
derived category of $A$. By standard convention, we let
$\mc{C}^b(A,d)$ and $\mc{D}^b(A,d)$ be the usual {\sl bounded}
homotopy and derived categories of $A$. These categories are
triangulated, with homological degree shifts denoted by $t_d^{\pm1}$ 
in contrast to $t^{\pm1}$ 
on $H\udmod$ recalled earlier.

For a Hopf algebra $H$, $\mc{D}^b(H,d)$ is equipped with a monoidal
structure as follows. If $M=(\oplus_{i}M^i,d_M)$ and
$N=(\oplus_{i} N^i,d_N)$ are complexes of $H$-modules, then set
$M\otimes N$ to be the complex whose homogeneous degree $i$ term and
differential action are given by
\begin{equation}
  (M\otimes N)^i:=\bigoplus_{k} M^k\otimes N^{i-k},\quad d_{M\otimes N} (x\otimes y):= d_M(x)\otimes y +(-1)^k x\otimes d_N(y)
\end{equation}
for any $x\in M^k$ and $y\in N^{i-k}$.

A complex of graded $H$-modules is called \emph{perfect} if it is
quasi-isomorphic to a bounded complex of projective $H$-modules. The
full subcategory of perfect complexes in $\mc{D}(H,d)$ will be denoted
$\mc{P}_H$, and it is idempotent closed or \emph{thick} \cite[Theorem
2.1]{Ri}. Furthermore, $\mc{P}_H$ is a monoidal ideal in $\mc{D}(H,d)$
again because $H\otimes M$ and $M\otimes H$ are projective for any
$H$-module $M$.
One can thus form the Verdier quotient category
$\mc{D}^b(H,d)/\mc{P}_H$, which inherits the triangulated monoidal
category structure from that of $\mc{D}^b(H,d)$.

\begin{thm}\label{thm-Rickard}
  There is an equivalence of triangulated monoidal categories
  \begin{equation}
    \mc{R}: H\udmod \lra   \mc{D}^b(H,d)/\mc{P}_H 
  \end{equation}
\end{thm}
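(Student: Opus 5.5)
The functor $\mc{R}$ is the composite of the inclusion of $H$-modules as complexes concentrated in homological degree zero, $H\dmod \to \mc{D}^b(H,d)$, followed by the Verdier quotient functor to $\mc{D}^b(H,d)/\mc{P}_H$. The steps are as follows.

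\textbf{Step 1: $\mc{R}$ is well defined.} A graded projective $H$-module, viewed as a complex in degree zero, is perfect. Hence it becomes zero in the quotient. Since $H$ is self-injective, a morphism factoring through a projective maps to zero, so the composite descends to an additive functor on $H\udmod$.

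\textbf{Step 2: $\mc{R}$ is triangulated.} First, $\mc{R}$ commutes with $t$ (identified with $t_d$). Take the functorial injective embedding $0\to M\to H\otimes M\to tM\to 0$ and view it as a short exact sequence of complexes. It gives a distinguished triangle $M\to H\otimes M\to tM\to t_dM$ in $\mc{D}^b(H,d)$. The middle term is perfect, so in the quotient $tM\cong t_dM$, naturally in $M$. Second, distinguished triangles in $H\udmod$ are isomorphic to images of short exact sequences. These become triangles in $\mc{D}^b$ and hence in the quotient.

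\textbf{Step 3: $\mc{R}$ is monoidal.} The tensor product of complexes concentrated in degree zero is the module tensor product, so monoidality is immediate. Compatibility with the induced structures holds because both ideals (projectives, respectively $\mc{P}_H$) are tensor ideals, as recorded above.

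\textbf{Step 4: essential surjectivity.} Every bounded complex is built from its terms by iterated cones, so the image contains a triangulated subcategory containing all modules in all degrees. More directly, induct on the length of the complex. The stupid truncation triangle expresses $X$ as a cone of a map $t_d^{-1}X'\to M^n$ with $X'$ shorter. By induction $X'\cong\mc{R}(N)$, and $t_d^{-1}\mc{R}(N)\cong \mc{R}(t^{-1}N)$. So the map is $\mc{R}(t^{-1}N)\to\mc{R}(M^n)$. Once Step 5 is known, this map is $\mc{R}(f)$ for some $f$ in $H\udmod$, and its cone is $\mc{R}(\mathrm{Cone} f)$.

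\textbf{Step 5: full faithfulness (main obstacle).} For modules $M,N$ I must show
\[
\mathrm{Hom}_{\mc{D}^b/\mc{P}_H}(M,N)\cong \underline{\mathrm{Hom}}_H(M,N).
\]
I would use Rickard's argument, or equivalently Buchweitz's. A morphism in the Verdier quotient is a roof $M\leftarrow X\to N$ whose cone is perfect. The key facts are:
\begin{enumerate}[(a)]
\item Any perfect complex maps into a complex of injectives, and $H$ is self-injective.
\item For a module $N$ and a bounded complex of projectives $P$, morphisms $P\to t_d^kN$ in $\mc{D}^b$ can be computed via $\mathrm{Ext}$ groups. Using injective coresolutions of $N$ by projective-injectives, one replaces $N$ by a sufficiently high truncation $t^{m}$ syzygy.
\end{enumerate}
The concrete plan is to show that $\mathrm{Hom}_{\mc{D}^b/\mc{P}_H}(M,N)$ equals the colimit $\varinjlim_n \mathrm{Hom}_{\mc{D}^b}(\Omega^{n}M, \Omega^{n}N)$. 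Here the syzygies $\Omega^n$ come from functorial projective covers, and the triangle $\Omega M\to P_M\to M$ identifies $M\cong t_d\,\Omega M$ in the quotient. For modules, $\mathrm{Hom}_{\mc{D}^b}(\Omega^nM,\Omega^nN)=\mathrm{Hom}_H$. The transition maps of the colimit stabilize to $\underline{\mathrm{Hom}}_H(M,N)$, since $\Omega$ is an autoequivalence of $H\udmod$ and morphisms factoring through projectives die after one step. The hardest bookkeeping is the first identification: showing every roof can be pushed to one whose perfect cone is killed after replacing $M$ by a syzygy. I would handle this by brutally truncating the perfect cone and using that $\mathrm{Hom}_{\mc{D}^b}(\Omega^nM, P[k])=0$ for $k>0$ projective $P$ (self-injectivity), with $n$ larger than the length of the cone.
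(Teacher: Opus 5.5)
Your Steps 1--4 are sound, and together with the monoidal remark they contain everything the paper itself says. The paper's proof only invokes Rickard's Theorem 2.1, remarks that the argument adapts verbatim to graded modules, and notes that $\mc{R}$ places $M$ in homological degree zero, which makes monoidality clear.

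The problem is Step 5, which is the actual content of Rickard's theorem: the colimit formula you propose is false. Between modules $\mathrm{Hom}_{\mc{D}^b}=\mathrm{Hom}_H$, so your transition maps must be $f\mapsto\Omega f$.
\begin{itemize}
\item With minimal projective covers this map is not even well defined in $\mathrm{Hom}_H$, since a lift is unique only up to maps factoring through a projective.
\item With the functorial surjection $H\otimes M\to M$ it is well defined, but it does not kill a map factoring through a projective $P$; it sends it to a map factoring through the projective $\Omega P$.
\end{itemize}
For example, $\Omega H=\ker(H\otimes H\to H)$ is a nonzero free module and $\Omega(\mathrm{id}_H)=\mathrm{id}_{\Omega H}$. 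So $[\mathrm{id}_H]\neq 0$ in $\varinjlim_n\mathrm{Hom}_H(\Omega^nH,\Omega^nH)$, whereas $H\cong 0$ in $\mc{D}^b(H,d)/\mc{P}_H$. Thus ``morphisms factoring through projectives die after one step'' fails for your colimit, and that colimit does not compute the quotient Hom.

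The correct version uses fractions $M\to N'\leftarrow N$ together with the cofinal system $N\to t_d^n\Omega^nN$. These maps are composites of the connecting maps of $0\to\Omega^{k+1}N\to P_k\to\Omega^kN\to0$, and each has perfect cone. This gives
\[
\mathrm{Hom}_{\mc{D}^b/\mc{P}_H}(M,N)\cong\varinjlim_n\mathrm{Hom}_{\mc{D}^b}(M,t_d^n\Omega^nN)=\varinjlim_n\mathrm{Ext}^n_H(M,\Omega^nN).
\]
\begin{itemize}
\item \emph{Cofinality.} If $N\to N'$ has perfect cone $C$, the composite $t_d^{-1}C\to N\to t_d^n\Omega^nN$ vanishes for $n\gg0$, depending on the degrees in which $C$ is supported. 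This holds because $\mathrm{Hom}_{\mc{D}^b}(P,t_d^kY)=\mathrm{Ext}^k_H(P,Y)=0$ for $k\neq0$ and $P$ projective. Hence $N\to t_d^n\Omega^nN$ factors through $N'$.
\item \emph{Identification.} Dimension shifting and self-injectivity give $\mathrm{Ext}^n_H(M,\Omega^nN)\cong\underline{\mathrm{Hom}}_H(\Omega^nM,\Omega^nN)\cong\underline{\mathrm{Hom}}_H(M,N)$ for $n\geq1$. Here a map through a projective really does die at the first step, because $\mathrm{Ext}^1_H(P,-)=0$.
\end{itemize}

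Your final sentence has the right vanishing idea, but attached to the wrong object. $\mathrm{Hom}_{\mc{D}^b}(X,t_d^kP)=0$ for $k>0$ holds for every module $X$, so the syzygy index plays no role there. If you want to keep roofs $M\leftarrow X\to N$, use the maps $t_d^{-n}\Omega^{-n}M\to M$ coming from an injective (equivalently projective) coresolution of $M$; their cones are perfect, and $\mathrm{Hom}_{\mc{D}^b}(t_d^{-n}\Omega^{-n}M,C)=0$ for $n\gg0$.

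Faithfulness on its own is easier. If $\mc{R}(f)=0$, then $f$ factors through a perfect $C$. Since $C$ is a bounded complex of projective-injectives, both maps are chain maps up to homotopy, so $f$ factors through the projective module $C^0$. With these repairs your argument becomes a self-contained reproof of the result the paper simply cites.
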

\begin{proof}
  This is \cite[Theorem 2.1]{Ri}, which is only stated for the
  ungraded situation, but the proof there adapts verbatim to the
  graded situation. We only need to remark that the Rickard functor
  $\mc{R}$ sends an $H$-module $M$ into the complex whose only nonzero
  term is $M$ itself supported in homological degree zero. This is
  clearly compatible with the monoidal structures on $H\udmod$ and
  $\mc{D}^b(H,d)$.
\end{proof}

\begin{rem}
  It follows from the theorem that the triangulated monoidal structure
  on $H\udmod$ can also be described via this Verdier quotient
  structure. For instance, a short exact sequence of chain complexes
  of graded $H$-modules leads to a distinguished triangle in
  $\mc{D}^b(H,d)$, and thus passes to a distinguished triangle in
  $H\udmod$. Any distinguished triangle in $H\udmod$ is isomorphic to
  one arising in this manner.

  As an application, we can match the homological shifts on both sides
  of Rickard's equivalence. One starts with the short exact sequence
  of chain complexes
  \begin{equation}
    \begin{gathered}
      \xymatrix{
      & 0 \ar[r] & q^{2-2p} H \ar[r] \ar@{=}[d] &  q^{2-2p} U_{p-2} \ar[r] \ar@{=}[d] & 0 \\
      0\ar[r] & \Bbbk \ar@{=}[d] \ar[r] & q^{2-2p} H \ar[d] \ar[r] & q^{2-2p}U_{p-2} \ar[r] & 0\\
      0\ar[r] & \Bbbk \ar[r] & 0 & & }
    \end{gathered}
  \end{equation}
  The middle term is exact, and thus vanishes in the
  $\mc{D}^b(H,d)$. We have an isomorphism in $\mc{D}^b(H,d)$:
  \begin{equation}\label{eqn-shift1}
    \left(  0 \lra \underline{\Bbbk} \lra 0
    \right)  \cong   t_d\left(
      0\lra \underline{0} \lra q^{2-2p} H  \lra q^{2-2p} U_{p-2} \lra 0
    \right) 
\cong
    \left(
      0 \lra \underline{ q^{2-2p} H } \lra q^{2-2p} U_{p-2} \lra 0
    \right) ,
  \end{equation}
  where the underlined terms sit in homological degree zero. In the
  quotient category, we have the isomorphism
  \begin{equation}\label{eqn-shift2}
    \left(
      0 \lra \underline{ q^{2-2p} H } \lra q^{2-2p} U_{p-2} \lra 0
    \right)
    \cong
    \left(
      \underline{ 0 } \lra q^{2-2p} U_{p-2} \lra 0
    \right).
  \end{equation}
  It follows, upon shifting \eqref{eqn-shift2} by $t_d$, that
  \begin{equation}
    t_d\Bbbk \cong q^{2-2p}U_{p-2},
  \end{equation}
  and tensoring with $t_d\Bbbk$ gives the correct homological shift
  on $\mc{D}^b(H,d)/\mc{P}_H $ as described in \eqref{eqn-V-p-2}.
\end{rem}

Rickard's Theorem provides a way of taking slash cohomology of a chain
complex of graded $H$-modules. This is analogous to the usual
hyper-cohomology in the setting of $p$-complexes.

\begin{defn}\label{defhyperslash}
  If $M$ is a bounded complex of graded $H$-modules, we will denote
  its image in the stable category via Rickard's equivalence
  $\mc{D}^b(H,d)/\mc{P}_H$ by $\mH^\bullet_/(M)$.
\end{defn}

\subsubsection{Grothendieck ring}
As a matter of notation for later, we will regard $\Z[q,q^{-1}]$ as the Grothendieck ring of graded (chain complexes of) abelian groups. We will often use the usual \emph{quantum integers and binomial coefficients}
\begin{equation}
    [n]:=q^{n-1}+q^{n-3}+\cdots+q^{1-n}, \quad \quad \genfrac[]{0pt}{}{n}{k}:= \dfrac{[n][n-1]\cdots [n-k+1]}{[k][k-1]\cdots [1]} ,
\end{equation}
for any $n\in \N$ and $k\in \{0,\dots, n\}$.

The stable category $H\udmod$ is of particular interest to us because
it categorifies a ring closely related to the ring of cyclotomic integers at a $2p$th root of
unity.

\begin{lem}[{\cite[Proposition 5]{Hopforoots}}]
  The Grothendieck ring of the symmetric tensor triangulated category
  $H\udmod$ is isomorphic to
  \begin{equation}
    K_0(H\udmod) \cong \dfrac{\Z[q,q^{-1}]}{(1+q^2+\dots+q^{2(p-1)})}.
  \end{equation}
\end{lem}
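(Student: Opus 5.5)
The plan is to compute $K_0(H\udmod)$ directly from the classification of indecomposable $p$-complexes and the Krull--Schmidt property. Every finite-dimensional graded $H$-module is a direct sum of the modules $q^aU_i$ with $a\in\Z$ and $0\le i\le p-1$. Passing to the stable category kills exactly the projective-injective summands $q^aU_{p-1}=q^aH$. Hence $K_0(H\udmod)$ is generated as an abelian group by the classes $[q^aU_i]$ with $i\le p-2$, subject to the relations coming from distinguished triangles.

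\textbf{Step 1: a surjection.} I would define a map $\Z[q,q^{-1}]\to K_0(H\udmod)$ sending $q^a\mapsto[q^a\Bbbk]=[q^aU_0]$. It is a ring map, since $q^a\Bbbk\otimes q^b\Bbbk\cong q^{a+b}\Bbbk$. For surjectivity I use the short exact sequences
\[
0\to q^{a+2i}\Bbbk\to q^aU_i\to q^aU_{i-1}\to 0 .
\]
These yield distinguished triangles in $H\udmod$ (by the remark following Theorem~\ref{thm-Rickard}). By induction on $i$ they give
\[
[q^aU_i]=q^a(1+q^2+\dots+q^{2i})=q^a\,[i+1]\,q^{i}.
\]

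\textbf{Step 2: the relation holds.} The same filtration applied to $U_{p-1}=H$, which is zero in $H\udmod$, gives $1+q^2+\dots+q^{2(p-1)}=0$ in $K_0$. So the map factors through the quotient ring in the statement. One could also check this against $t^2\cong q^{-2p}$, together with $[t M]=-[M]$, to see that this is consistent.

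\textbf{Step 3: injectivity.} This is the main obstacle, since one must show there are no further relations. I would construct an inverse using the graded dimension. The map $M\mapsto \sum_a \dim(M_a)q^a$ is additive on short exact sequences of modules, and it sends projectives $q^aH$ into the ideal generated by $\Phi:=1+q^2+\dots+q^{2(p-1)}$. It therefore descends to a map on objects of $H\udmod$ with values in $\Z[q^{\pm1}]/(\Phi)$.

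I must check that it is additive on distinguished triangles. Every triangle is isomorphic to a standard one $M\to N\to \mathrm{Cone}\to tM$, where the cone is built from an injective embedding $M\to N\oplus I_M$ and a genuine short exact sequence of modules. The graded dimension is additive on that sequence, and $I_M$ is free, so its contribution lies in $(\Phi)$. Stable isomorphism changes a module only by free summands, so the function is well defined. Consequently it defines a homomorphism $K_0(H\udmod)\to \Z[q^{\pm1}]/(\Phi)$.

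This homomorphism is inverse to the map of Step 1 on the generators $[q^a\Bbbk]$. Hence the map of Step 1 is an isomorphism of rings. Multiplicativity follows because graded dimension is multiplicative under $\otimes_\Bbbk$.
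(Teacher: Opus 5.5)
Your proof is correct, but it takes a different route from the paper. The paper gives no argument of its own beyond the citation to Khovanov. The only reasoning it offers is the remark right after the lemma, which goes through Rickard's equivalence:
\begin{itemize}
\item the graded Euler characteristic identifies $K_0(\mc{D}^b(H,d))$ with $\Z[q,q^{-1}]$;
\item the thick subcategory $\mc{P}_H$ is sent onto the ideal generated by $\Phi=1+q^2+\dots+q^{2(p-1)}$, since classes of perfect complexes lie in the span of the $[q^aH]=q^a\Phi$;
\item the quotient then follows, implicitly, from right exactness of $K_0(\mc{P}_H)\to K_0(\mc{D}^b(H,d))\to K_0(\mc{D}^b(H,d)/\mc{P}_H)\to 0$.
\end{itemize}

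You instead stay inside $H\udmod$ with its Happel triangulation. Surjectivity comes from filtering $q^aU_i$ by its one-dimensional layers. Injectivity comes from checking directly that graded dimension modulo $\Phi$ is invariant under stable isomorphism and additive on the cone triangles built from $M\hookrightarrow N\oplus I_M$.

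\textbf{What each route buys.} Your route is elementary and self-contained. It needs only the Krull--Schmidt classification and the fact that stably isomorphic finite-dimensional modules differ by projective summands. The Rickard route is more conceptual and avoids explicit manipulation of cones. It also fits directly with the cyclotomic Euler characteristic $\chi_{\mathbb{O}}$ of complexes used later in the paper. Its cost is invoking Rickard's theorem and the localization sequence for $K_0$ of a Verdier quotient.

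\textbf{One point to make explicit.} $H\udmod$ must mean the stable category of \emph{finite-dimensional} graded modules; your Step 3 already presupposes this. If arbitrary modules were allowed, $M\oplus M^{\oplus\infty}\cong M^{\oplus\infty}$ would force $K_0=0$.
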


For the rest of this paper, we denote
\begin{equation}
  \mathbb{O}_p:= \dfrac{\Z[q,q^{-1}]}{(1+q^2+\dots+q^{2(p-1)})}.
\end{equation}
We will refer to the map
\begin{equation}\label{eqn:Eulerchar}
K_0: H\udmod \lra \mathbb{O}_p,
\end{equation}
sending a graded $H$-module $M$ to the symbol
of its slash cohomology $\mH^\bullet_/(M)$ in $K_0(H\udmod)$ as the \emph{cyclotomic
Euler characteristic} of $M$.

Alternatively, via Rickard's Theorem (Theorem \ref{thm-Rickard}), the usual alternating
sum of graded dimensions, or \emph{graded Euler characteristic}, of
chain complexes identifies $K_0(\mc{D}^b(H,d))$ with $\Z[q,q^{-1}]$. Under
the graded Euler characteristic, the tensor ideal $\mc{P}_H$ is sent
to the principal ideal generated by $1+q^2+\dots+ q^{2(p-1)}$. These
identifications are readily seen to respect the monoidal structures
(again via the fact that $M\otimes H$ or $H\otimes M$ is graded free
for any $H$-module $M$). 
Given a bounded chain complex $M^\bullet$ of graded $H$-modules, we will refer to the composition map
\begin{equation}\label{eqn-cyclotomic-Euler-characteristic}
  \chi_{\mathbb{O}}:\mc{C}^b(H,d) \lra \mc{D}^b(H,d)\lra H\udmod \stackrel{K_0}{\lra} \mathbb{O}_p, \quad
  \chi_{\mathbb{O}}(M^\bullet)=\sum_i (-1)^i K_0(M^i)
\end{equation}
sending a complex of graded $H$-modules $M$ to the symbol
of its class in $K_0(H\udmod)$ as the \emph{cyclotomic
Euler characteristic} of $M$.

\subsubsection{Relative homotopy category}
We next extend the cyclotomic Euler characteristic to relative homotopy categories of $p$-DG algebras.

Suppose $(A,\dif_A)$ is a \emph{$p$-differential graded ($p$-DG)
algebra}, i.e., a graded algebra equipped with a differential $\dif_A$
of degree two, satisfying
\begin{equation}
  \dif_A^p(a)\equiv 0, \qquad \text{and} \qquad \dif_A(ab)=\dif_A(a)b+a\dif_A(b),
\end{equation}
for all $a,b\in A$. In other words, $A$ is an algebra object in the
graded module category of the Hopf algebra $H$, where the primitive
degree-two generator $\dif\in H$ acts on $A$ by the differential
$\dif_A$. 

Then, we may form the \emph{smash product algebra} $A\# H$  in this
case. As a $\Bbbk$-vector space, $A\# H$ is isomorphic to
$A\otimes H$; its multiplication is given by:
\begin{equation}
  (a\otimes \dif)(b\otimes \dif)=ab\otimes \dif^2+ a\dif_A(b)\otimes \dif.
\end{equation}
Thus, for a $p$-DG algebra $A$,  $A\otimes 1$ and $1\otimes H$ sit in
$A\# H$ as subalgebras by construction. 
We will often refer to modules in $A\#H\dmod$ as $H$-equivariant $A$-modules (or $p$-DG $A$-modules), and to their morphisms as $H$-equivariant (or $p$-DG) morphisms.

There is an exact forgetful functor between the usual homotopy
categories of chain complexes of graded $A\# H$-modules
\begin{equation}\label{eqn-forgetful-functor}
  \mc{F}: \mc{C}(A\# H)\lra \mc{C}(A).
\end{equation}
An object $K_\bullet$ in $\mc{C}(A\# H)$ lies inside the kernel of
the functor if and only if, when forgetting the $H$-module structure
on each term of $K_\bullet$, the complex of graded $A$-modules
$\mc{F}(K_\bullet)$ is null-homotopic. The null-homotopy map on
$\mc{F}(K_\bullet)$, though, is not required to intertwine
$H$-actions.

\begin{defn}\label{def-relative-homotopy-category}
  Given a $p$-DG algebra $(A,\dif_A)$, the \emph{relative homotopy
  category} is the Verdier quotient
  \[\mc{C}^\dif(A,d):=\dfrac{\mc{C}(A\#
    H,d)}{\mathrm{Ker}(\mc{F})}.\]
\end{defn}
The superscript $\dif$ in the definition is there to remind the reader of the
$H$-module structures on the objects.

The category $\mc{C}^\dif(A,d)$ is triangulated. By construction,
there is a factorization of the forgetful functor
\begin{gather}\label{eqn-localization-diagram}
  \NB{
    \tikz[xscale=3, yscale =0.8]{
      \node (AH) at (-1, 1) {$\mc{C}(A\# H,d)$};
      \node (A) at ( 1, 1) {$\mc{C}(A,d)$};
      \node (Adif) at ( 0, -1) {$\mc{C}^\dif(A,d)$};
      \draw[-to] (AH) -- (A) node[pos =0.5, above] {$\mc{F}$};
      \draw[-to] (AH) -- (Adif);
      \draw[-to] (Adif) -- (A);      
    }
  }  \ .
\end{gather}

Let us briefly discuss the triangulated structure of the relative homotopy category $\mc{C}^\dif(A,d)$. By construction, the shift functors $[\pm 1]$ are inherited from those of $\mc{C}(A\# H,d)$, which shift complexes one step to the left or right. 

For the usual homotopy category $\mc{C}(A,d) $ of an algebra, standard distinguished triangles arise from short exact sequences
  \[
 0 \lra M^\bullet \stackrel{f}{\lra} N^\bullet \stackrel{g}{\lra} L^\bullet \lra 0
  \]
of $A$-modules that are termwise split exact. The class of distinguished triangles in $\mc{C}(A,d)$ is declared to be those that are isomorphic to standard ones.
For distinguished triangles in the relative homotopy category, similarly, we have the following construction.

\begin{lem}\label{lem-construction-of-triangle} \cite[Lemma 2.3]{QRSW1}
  A short exact sequence of chain complexes of $A\#H$-modules
  \[
 0 \lra M^\bullet \stackrel{f}{\lra} N^\bullet \stackrel{g}{\lra} L^\bullet \lra 0
  \]
 that is termwise $A$-split exact gives rise to a distinguished triangle in $\mc{C}^\dif(A,d)$. Conversely, any distinguished triangle in $\mc{C}^\dif(A,d)$ is isomorphic to one that arises in this form.
\end{lem}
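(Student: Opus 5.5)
The plan is to mimic the classical argument that termwise split short exact sequences give distinguished triangles in a homotopy category, working first in $\mc{C}(A\# H,d)$ and then passing to the Verdier quotient $\mc{C}^\dif(A,d)$.

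\textbf{Forward direction.} Start from $0\to M^\bullet\xrightarrow{f} N^\bullet\xrightarrow{g} L^\bullet\to 0$, exact in each degree and $A$-split (but not necessarily $H$-split). Form the mapping cone $\mathrm{Cone}(f)$ in $\mc{C}(A\# H,d)$; it carries the induced $A\#H$-structure, since $f$ is $A\#H$-linear. There is a natural $A\#H$-equivariant chain map
\[
\phi:\mathrm{Cone}(f)\lra L^\bullet,\qquad (m[1],n)\mapsto g(n).
\]
The standard triangle $M^\bullet\to N^\bullet\to \mathrm{Cone}(f)\to M^\bullet[1]$ is distinguished in $\mc{C}(A\# H,d)$, hence also in the quotient. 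It therefore suffices to show that $\phi$ becomes an isomorphism in $\mc{C}^\dif(A,d)$. For that, consider the cone of $\phi$, which is an object of $\mc{C}(A\#H,d)$. After applying $\mc{F}$, the sequence splits termwise over $A$. The classical argument then applies: with an $A$-linear splitting $s$ of $g$ and retraction $r$ of $f$, one writes down an explicit $A$-linear null-homotopy of $\mathrm{Cone}(\phi)$, or equivalently an $A$-linear homotopy inverse $L^\bullet\to\mathrm{Cone}(f)$ given by $\ell\mapsto (r d s(\ell)[1]\text{-type correction},\, s(\ell))$. Consequently $\mathrm{Cone}(\phi)\in\mathrm{Ker}(\mc{F})$, so $\phi$ is invertible in the Verdier quotient. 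Substituting $L^\bullet$ for $\mathrm{Cone}(f)$ yields the distinguished triangle $M\to N\to L\to M[1]$ in $\mc{C}^\dif(A,d)$. The connecting morphism is the roof $L\xleftarrow{\phi}\mathrm{Cone}(f)\to M[1]$; no $H$-equivariant splitting is required.

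\textbf{Converse direction.} Any distinguished triangle in the Verdier quotient is isomorphic to the image of a standard triangle $X\xrightarrow{u}Y\to\mathrm{Cone}(u)\to X[1]$ with $u$ a morphism in $\mc{C}(A\#H,d)$. Replace $Y$ by the mapping cylinder $\mathrm{Cyl}(u)=X\oplus X[1]\oplus Y$. The inclusion $X\to\mathrm{Cyl}(u)$ is termwise split as $A\#H$-modules, hence as $A$-modules, and its cokernel is $\mathrm{Cone}(u)$. The projection $\mathrm{Cyl}(u)\to Y$ is a homotopy equivalence in $\mc{C}(A\#H,d)$ itself. The usual homotopy $X\oplus X[1]\to$ itself consists of identity-type components, which are $A\#H$-linear, so this equivalence is compatible with the triangle. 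Hence the given triangle is isomorphic to the one produced by
\[
0\lra X\lra \mathrm{Cyl}(u)\lra \mathrm{Cone}(u)\lra 0 .
\]

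\textbf{Expected obstacle.} The main difficulty lies in the forward direction: checking carefully that $\mathrm{Cone}(\phi)$ lies in $\mathrm{Ker}(\mc{F})$. This requires only $A$-linear, not $H$-linear, homotopies, which is exactly what the definition of $\mathrm{Ker}(\mc{F})$ allows. The verification also needs sign conventions consistent with the cone construction, as well as confirmation that the connecting map is independent of choices up to the quotient. This last point follows because two $A$-splittings differ by maps killed in $\mc{C}^\dif(A,d)$.
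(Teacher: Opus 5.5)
Your argument is correct. The map $\phi\colon \mathrm{Cone}(f)\to L^\bullet$ is $A\#H$-linear. The $A$-splitting makes $\mc{F}(\phi)$ a homotopy equivalence, so $\mathrm{Cone}(\phi)\in\mathrm{Ker}(\mc{F})$ and $\phi$ is invertible in $\mc{C}^\dif(A,d)$. For the converse, the mapping cylinder realizes any standard cone triangle by a termwise $A\#H$-split sequence with the same connecting map.

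The paper gives no proof of its own here; it only cites \cite[Lemma 2.3]{QRSW1}, and your cone-plus-cylinder argument is the standard proof of this statement. Drop your closing remark about independence of the $A$-splitting. The connecting morphism $\pi\circ\phi^{-1}$ involves no choices, and an $A$-linear splitting is not a morphism in $\mc{C}(A\#H,d)$ or its quotient. The splitting is used only to show that $\mc{F}(\phi)$ is a homotopy equivalence.
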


Since the objects of $\mathrm{Ker}(\mc{F})$ are clearly contained in the class of acyclic objects in $\mc{C}(A\#H,d)$, one has a well-defined functor
\begin{equation}
    \mc{C}^{\dif}(A,d)\to \mc{D}(A\#H,d) .
\end{equation}
It follows that the cyclotomic Euler characteristic $\chi_{\mathbb{O}}$ \eqref{eqn-cyclotomic-Euler-characteristic} is well-defined on bounded relative homotopy classes. In the special case when $A=\Bbbk$, the kernel of the forgetful functor $\mc{F}$ in diagram \eqref{eqn-localization-diagram} consists precisely of acyclic complexes. Therefore, we have $\mc{C}^\dif(\Bbbk,d)=\mc{D}(H,d)$, and $\chi_{\mathbb{O}}$ is thus equal to the composition map (cf.~equation \eqref{eqn-cyclotomic-Euler-characteristic})
\begin{equation}\label{eqn-relative-homotopy-equals-derived}
    \chi_{\mathbb{O}}:\mc{D}(H,d)\lra H\udmod\stackrel{K_0}{\lra} \mathbb{O}_p.
\end{equation}

\subsection{Graded representation theory of \texorpdfstring{$\mathfrak{sl}(2)$}{sl(2)} in prime characteristic} \label{sec:sl(2)-rep-theory}

In this section, we collect some facts about the distribution algebra of the group $G=\mathrm{SL}(2)$ in positive characteristic. We refer to \cite[Part I, Chapter 7]{Jantzen} for a more general approach. 

Consider the special linear group $\mathrm{SL}(2)$ over a field $\Bbbk$ of characteristic $p$, and let $\mathfrak{sl}(2)=\Bbbk\langle e,f,h\rangle$ be the Lie algebra of $\mathrm{SL}(2)$ over $\Bbbk$.  Denote by
$\Ul_p(\mathfrak{sl}(2))$ the distribution algebra of $\mathrm{SL}(2)$. It is the cocommutative Hopf algebra generated by the divided powers
\[
 e^{(r)},\qquad f^{(r)},\qquad \binom{h}{r},
 \qquad r\geq 0,
\]
subject to the relations
\begin{align} 
e^{(r)}e^{(s)} =\binom{r+s}{r}e^{(r+s)},\quad \quad f^{(r)}f^{(s)} =\binom{r+s}{r}f^{(r+s)},\\
\binom{h}{m}e^{(r)} =e^{(r)}\binom{h+2r}{m},\quad \quad \binom{h}{m}f^{(r)} =f^{(r)}\binom{h-2r}{m},\\ e^{(r)}f^{(s)} =\sum_{j=0}^{\min(r,s)} f^{(s-j)} \binom{h-r-s+2j}{j} e^{(r-j)},
\end{align}
where 
\[
\binom{h+a}{m}
=
\sum_{i=0}^{m}
\binom{a}{i}\binom{h}{m-i} .
\]

The first Frobenius kernel $\ul_p(\mathfrak{sl}(2))$ is the subalgebra of the distribution algebra $\Ul_p(\mathfrak{sl}(2))$ generated by $e$, $f$ and $h$, which satisfy the relations
 \[e^p =f^p=0,\quad \quad h^p-h=0.\]
Both $\Ul_p(\mathfrak{sl}(2))$ and $\ul_p(\mathfrak{sl}(2))$ are graded algebras by setting
\[
 \deg(e^{(r)})=2r,\qquad
 \deg(f^{(r)})=-2r,\qquad
 \deg\binom{h}{r}=0.
\]
We regard both as $p$-DG algebras by equipping them with the zero $p$-differential. When no confusion can be caused, we abbreviate $\Ul_p(\mathfrak{sl}(2))$ and $\ul_p(\mathfrak{sl}(2))$ as $\Ul_p$ and $\ul_p$ respectively.

The group $\mathrm{SL}(2)$ acts naturally on the $2$-dimensional vector space $T(1)=V_1=\Bbbk\langle v_1, v_{-1}\rangle$. Upon symmetrization, we obtain the \emph{Weyl module} for any $n\in \N$:
\begin{equation}
    W(n):=\mathrm{Sym}^n T(1).
\end{equation}
Define the \emph{dual Weyl module} $M(n):=W(n)^*$, equipped with the twisted $\mathrm{SL}(2)$ action coming from the Cartan involution. 

\begin{defn}\label{def-tilting}
A finite-dimensional
$\mathrm{SL}(2)$-module is called tilting if it admits both a Weyl filtration and a
dual Weyl filtration. For every $n\geq 0$, there is a unique
indecomposable tilting module $T(n)$ of highest weight $n$.
\end{defn}

Let $\operatorname{Tilt}$ be the additive monoidal category generated
by the modules $T(n)$. The natural 2-dimensional module $T(1)$
generates this category under tensor products, direct sums and direct
summands. Thus every $T(n)$ occurs as a direct summand of
$V_1^{\otimes n}$.

For $0\leq n\leq p-1$, all three highest-weight modules coincide:
\[
 T(n)\cong W(n)\cong M(n).
\]
In particular, $T(p-1)$ is the \emph{Steinberg module}.

\subsubsection{Clebsch--Gordan rule for tilting modules}

We refer the reader to \cite{BLS, SuttonTubbenhauerWedrichZhu} for the following fusion rule of tilting modules. For any positive integer $n$, write its $p$-adic expansion as $n=\sum_{i=0}^{j}a_i p^i$, with $0\leq a_i\leq p-1$, $  a_j\neq0$, and denote by
\[
  n=\sum_{i=0}^{j}a_i p^i=[a_j,\ldots,a_0]_p.
\]

Define the tail length by
\[
  \tl(n)
  :=
  \begin{cases}
  j+1 & \text{if } a_j=\cdots=a_0=p-1,\\
  \min\{\,i\geq0\mid a_i\neq p-1\,\}
      & \text{otherwise}.
  \end{cases}
\]
Thus
\[
  a_0=\cdots=a_{\tl(n)-1}=p-1,
  \qquad
  a_{\tl(n)}\neq p-1,
\]
where, in the case $\tl(n)=j+1$, we set $a_{j+1}=0$.

For $0\leq i\leq \tl(n)$, define
\[
  x_i :=
  \begin{cases}
    0, & a_i=0,\\
    0, & i=j \text{ and } a_j=1,\\
    2, & a_i=1 \text{ and } i<j,\\
    1, & a_i>1.
  \end{cases}
\]
Then
\[
  T(n-1)\otimes T(1)
  \cong
  T(n)
  \oplus
  \bigoplus_{i=0}^{\tl(n)}
  T(n-2p^i)^{\oplus x_i}.
\]

\subsection{Diagrammatics of tilting modules} \label{sec:TLidemp}
Define
\[
  \NB{\tikz[scale = 0.6]{\begin{scope}
  \coordinate (lb) at ( 0, -1);
  \coordinate (lt) at ( 0, 1);
   \coordinate (rb) at ( 1, -1);
      \coordinate (rt) at ( 1, 1);
  \draw[] (lb) -- (rt) node[above, pos =1] {} node[below, pos =0] {};
    \draw[] (lt) -- (rb) node[above, pos =1] {} node[below, pos =0] {};
\end{scope}}}
 :=
    \NB{\tikz[scale = 0.6]{\begin{scope}
  \coordinate (bl) at (-0.5, -1);
  \coordinate (br) at ( 0.5, -1);
  \coordinate (tl) at (-0.5,  1);
  \coordinate (tr) at ( 0.5,  1);
    \coordinate (ml) at (-0.5,  -.8);
        \coordinate (Ml) at (-0.5,  .8);
 \coordinate (mr) at (0.5,  -.6);
\coordinate (Mr) at (0.5,  .6);

   \draw[] (bl) -- (tl);
  
    \draw[] (br) -- (tr);
  

\end{scope}}}
    -
   \NB{\tikz[scale = 0.6]{\begin{scope}
  \coordinate (bl) at (0, -1);
  \coordinate (br) at (2, -1);
  \coordinate (tl) at (0,  1);
  \coordinate (tr) at (2,  1);
    \coordinate (ml) at (-0.5,  -.8);
        \coordinate (Ml) at (-0.5,  .8);
 \coordinate (mr) at (0.5,  -.6);
\coordinate (Mr) at (0.5,  .6);

   \draw [] (.5, 1) arc (-180:0:0.5) ;
   \draw (.5, -1) arc (180:0:0.5) coordinate[pos = 0.2] (X) coordinate[pos = 0.8] (Y);
\end{scope}}} 
\]
where
\[
  \begin{array}{crcl}
    \NB{\tikz[scale = 0.6]{\begin{scope}
  \coordinate (bl) at (0, -1);
  \coordinate (br) at (2, -1);
  \coordinate (tl) at (0,  1);
  \coordinate (tr) at (2,  1);
    \coordinate (ml) at (-0.5,  -.8);
        \coordinate (Ml) at (-0.5,  .8);
 \coordinate (mr) at (0.5,  -.6);
\coordinate (Mr) at (0.5,  .6);

   \draw [] (.5, 1) arc (-180:0:0.5) ;

\end{scope}}}:& V_0 & \rightarrow& V_1^{\otimes 2} \\
    &1 &\mapsto &v_1 \otimes v_{-1} - v_{-1} \otimes v_1   
  \end{array}
  \qquad\text{and}\qquad 
  \begin{array}{crcl}
  \NB{\tikz[scale = 0.6]{\begin{scope}
  \coordinate (bl) at (0, -1);
  \coordinate (br) at (2, -1);
  \coordinate (tl) at (0,  1);
  \coordinate (tr) at (2,  1);
    \coordinate (ml) at (-0.5,  -.8);
        \coordinate (Ml) at (-0.5,  .8);
 \coordinate (mr) at (0.5,  -.6);
\coordinate (Mr) at (0.5,  .6);

   \draw (.5, -1) arc (180:0:0.5) coordinate[pos = 0.2] (X) coordinate[pos = 0.8] (Y);

\end{scope}}} \colon& V_1^{\otimes 2}& \rightarrow &V_0 \\
&v_i \otimes v_i &\mapsto& 0\\
&v_1 \otimes v_{-1} &\mapsto& 1 \\
&v_{-1} \otimes v_1& \mapsto& -1\\
\end{array}.
                           \]
More generally, define, for $i=1,\ldots, n-1$, an endomorphism
\[
s_i =: 
\NB{\tikz[scale = 0.6]{\begin{scope}
 \coordinate (llb) at ( -2, -1); 
  \coordinate (llt) at ( -2, 1);
   \coordinate (rrb) at ( 3, -1);
  \coordinate (rrt) at ( 3, 1);
  \coordinate (lb) at ( 0, -1);
  \coordinate (lt) at ( 0, 1);
   \coordinate (rb) at ( 1, -1);
      \coordinate (rt) at (1, 1);
  \draw[] (lb) -- (rt) node[above, pos =1] {} node[below, pos =0] {};
    \draw[] (lt) -- (rb) node[above, pos =1] {} node[below, pos =0] {};
      \draw[] (llb) -- (llt) node[right, pos =0.5] {} node[below, pos =0] {};
          \draw[] (rrb) -- (rrt) node[left, pos =.5] {}  node[below, pos =0] {};
          \node at (-1, 0) {$\dots$};
           \node at (2, 0) {$\dots$};
\end{scope}}} 
\]
giving rise to an action of the symmetric group $S_n$ on $V_1^{\otimes n}$.
When $n\leq p-1$, this leads to an endomorphism $\jw_n \colon V_1^{\otimes n} \rightarrow V_1^{\otimes n}$ defined by
\begin{equation} \label{eq:defjwn}
\jw_n = \frac{1}{n!} \sum_{w \in S_n} w \ .
\end{equation}
Graphically, we depict this morphism by:
\[ 
\jw_n =
  \NB{\tikz[scale = 0.6]{\begin{scope}

  \draw (0,0) rectangle (2,1) node[pos=.5] {$\jw_n$};
      \draw[-] (.25,1) -- (.25,1.5) node[right, pos =.5] {}  node[above, pos =1] {} node[below, pos =0] {};
       \node at (1, 1.25) {$\dots$};
    \draw[-] (1.75,1) -- (1.75,1.5) node[above, pos =1] {} node[below, pos =0] {};
          \draw[-] (.25,-0.5) -- (.25,0) node[right, pos =.5] {}  node[above, pos =1] {} node[below, pos =0] {};
            \node at (1, -0.25) {$\dots$};
    \draw[-] (1.75,-0.5) -- (1.75,0) node[above, pos =1] {} node[below, pos =0] {};
\end{scope}}} \ .
\]
They satisfy the recursive relation 
\begin{equation} \label{eqn:Vrec}
\jw_0=1, \quad  \quad  \jw_1=\NB{\tikz[scale=0.5]{\begin{scope}
  \coordinate (bm) at ( 0, -2);
  \coordinate (tm) at ( 0, 2);
  \draw (bm) -- (tm) ;
\end{scope}}}~,\quad \textrm{and}  \quad
   \NB{\tikz[scale=0.5]{
\begin{scope}
  \draw (0, 0) -- + ( 0, 4);
    \draw (1.5, 0) -- + ( 0, 4);
  \draw (2, 0) -- + ( 0, 4);
  \node at (0.75, 0.5) {$\dots$};
  \node at (0.75, 3.5) {$\dots$};
    \filldraw[fill=white, draw = black] (-0.2, 1.5) rectangle (2.2, 2.5) coordinate[pos=0.5] (B);
    \node at (B) {$\jw_{n+1}$};
\end{scope}}} = \NB{\tikz[scale=0.5]{
\begin{scope}
  \draw (0, 0) -- + ( 0, 4);
    \draw (1.5, 0) -- + ( 0, 4);
  \draw (2, 0) -- + ( 0, 4);
  \node at (0.75, 0.5) {$\dots$};
  \node at (0.75, 3.5) {$\dots$};
    \filldraw[fill=white, draw = black] (-0.2, 1.5) rectangle (1.7, 2.5) coordinate[pos=0.5] (B);
    \node at (B) {$\jw_{n}$};
\end{scope}}} - \frac{n}{n+1}   ~\NB{\tikz[scale=0.5]{
\begin{scope}
  \draw (0, 0) -- + ( 0,4);
  \draw (1.5, 1.5) -- + (0,2);
  \node at (1, 0) {$\dots$};
  \node at (1, 4) {$\dots$};
  \node at (0.75, 2) {$\dots$};
  \draw (2.5,0) -- +(0, 1.6) arc (0:180:0.25) -- +(0, -1.6);
  \draw (2.5,4) -- +(0, -1.6) arc (0:-180:0.25) -- +(0, 1.6);
  \filldraw[fill=white, draw = black] (-0.1, 0.3) rectangle (2.1, 1.5) coordinate[pos=0.5] (A);
    \filldraw[fill=white, draw = black] (-0.1, 3.7) rectangle (2.1, 2.5) coordinate[pos=0.5] (B);
    \node at (A) {$\jw_{n}$};
    \node at (B) {$\jw_{n}$};
\end{scope}}} \ .
\end{equation}
Due to the denominators appearing in the formula, $\jw_n$ is well-defined only for $n\in \{0,\dots, p-1\}$.

When $n=p-1$, the Jones--Wenzl projector $\jw_{p-1}$ is an indecomposable idempotent, and so is $\jw_{p-1} \otimes \Id$:
\begin{equation}\label{eqn:Pp-1Pp}
  \NB{\tikz[scale = 0.6]{\begin{scope}

  \draw (0,0) rectangle (2,1) node[pos=.5] {$\jw_{p-1}$};
      \draw[-] (.25,1) -- (.25,1.5) node[right, pos =.5] {}  node[above, pos =1] {} node[below, pos =0] {};
       \node at (1, 1.25) {$\dots$};
    \draw[-] (1.75,1) -- (1.75,1.5) node[above, pos =1] {} node[below, pos =0] {};
          \draw[-] (.25,-0.5) -- (.25,0) node[right, pos =.5] {}  node[above, pos =1] {} node[below, pos =0] {};
            \node at (1, -0.25) {$\dots$};
    \draw[-] (1.75,-0.5) -- (1.75,0) node[above, pos =1] {} node[below, pos =0] {};
\end{scope}}}~,\quad \quad
    \NB{\tikz[scale = 0.6]{\begin{scope}

  \draw (0,0) rectangle (2,1) node[pos=.5] {$\jw_{p-1}$};
      \draw[-] (.25,1) -- (.25,1.5) node[right, pos =.5] {}  node[above, pos =1] {} node[below, pos =0] {};
       \node at (1, 1.25) {$\dots$};
    \draw[-] (1.75,1) -- (1.75,1.5) node[above, pos =1] {} node[below, pos =0] {};
          \draw[-] (.25,-0.5) -- (.25,0) node[right, pos =.5] {}  node[above, pos =1] {} node[below, pos =0] {};
            \node at (1, -0.25) {$\dots$};
    \draw[-] (1.75,-0.5) -- (1.75,0) node[above, pos =1] {} node[below, pos =0] {};
\draw[-] (2.25,-0.5) -- (2.25,1.5) node[above, pos =1] {} node[below, pos =0] {};
\end{scope}}}~.
\end{equation}

Write $V=T(1)$ for the natural two-dimensional $SL(2)$-module.

With the conventions of \cite{SuttonTubbenhauerWedrichZhu}, a
closed circle evaluates to $-[2]_{\xi}$; we specialize $\xi=-1$. Since
\(
  [r]_{-1}=(-1)^{r-1}r,
\)
we obtain
\(
  \bigcirc=-[2]_{-1}=2,
\)
which agrees with the pivotal convention used above.

Let $\TL_{\Bbbk,-1}$ denote the resulting Temperley--Lieb
category.  After additive idempotent completion, the pivotal
functor 
\[
  \TL_{\Bbbk,-1}
  \longrightarrow
  \Ul_p 
\qquad
  n\longmapsto V^{\otimes n},
\]
identifies the Karoubi envelope of $\TL_{\Bbbk,-1}$ with the
category of tilting $G$-modules.  For every $m\geq 0$, let  
$  E_m\in\End_{\TL_{\Bbbk,-1}}(m)$
denote the mixed Jones--Wenzl idempotent whose image is the
indecomposable tilting module $T(m)$.
For $0\leq m\leq p-1$, this is the ordinary Jones--Wenzl
projector:  $E_m=\mathsf V_m$.

Graphically, we depict\footnote{Following the idea of \cite{SuttonTubbenhauerWedrichZhu}, the diagram for $E_m$ emphasizes the lack of left-right symmetry of these morphisms (see for instance \eqref{eqn:Pp-1Pp}).} this morphism by:
\[ 
E_m =
  \NB{\tikz[scale = 0.6]{\begin{scope}
    \draw (-1, -1) -- + (0,2);
  \draw (1.25, -1) -- + (0,2);
  \node at (0, -0.85) {$\dots$};
  \node at (0, 0.85) {$\dots$};
    \tikztilting{.00}{0}{1}{1}{$\scriptstyle{m}$}
\end{scope}
}} \ .
\]

In prime characteristic, these idempotents recover the (p)-Jones--Wenzl projectors introduced by Burrull--Libedinsky--Sentinelli \cite{BLS}. The recursive description used below, including the explicit idempotents projecting onto the lower tilting summands, is obtained by specializing the mixed-characteristic construction of Sutton--Tubbenhauer--Wedrich--Zhu \cite{SuttonTubbenhauerWedrichZhu}.

\subsubsection{The lower-summand idempotents}

Fix $0\leq i\leq\tl(n)$ and put $w=p^i$. Let
$X_n^i\in\End(E_{n-1}\otimes\id_1)$ be defined by
\begin{equation}
    X_n^0 = \NB{\tikz[scale = 0.5]{\begin{scope}
  \draw (-1, -0.5) -- + (0,7);
  \draw (1.25, -0.5) -- + (0,7);
    \draw (2.25, -0.5) -- + (0,7);
  \node at (0, 0) {$\dots$};
  \node at (0, 6) {$\dots$};
  \node at (0, 2) {$\dots$};
  \node at (0, 4) {$\dots$};
  \draw (3.25,-0.5) -- +(0, 2.75) arc (0:180:0.25) -- +(0, -2.75);
  \draw (3.25,6.5) -- +(0, -2.75) arc (0:-180:0.25) -- +(0, 2.75);
  \node at (1.75, 0) {$\dots$};
    \node at (1.75, 6) {$\dots$};
      \node at (1.75, 2) {$\dots$};
  \node at (1.75, 4) {$\dots$};
  \tikztilting{.75}{0.9}{1.52}{1}{$\scriptstyle{n-1}$}
  \tikztilting{.75}{5.1}{1.52}{1}{$\scriptstyle{n-1}$}
  \tikztilting{.00}{2.9}{1}{1}{$\scriptstyle{n-2a_0+1}$}

\end{scope}
}}, \qquad \text{and} \qquad
    X_n^i = \NB{\tikz[scale = 0.5]{\begin{scope}
  \draw (-1, -0.5) -- + (0,7);
  \draw (1.25, -0.5) -- + (0,7);
    \draw (2.25, -0.5) -- + (0,7);
  \node at (0, 0) {$\dots$};
  \node at (0, 6) {$\dots$};
  \node at (0, 2) {$\dots$};
  \node at (0, 4) {$\dots$};
  \draw (3.25,-0.5) -- +(0, 2.75) arc (0:180:0.25) -- +(0, -2.75);
  \draw (3.25,6.5) -- +(0, -2.75) arc (0:-180:0.25) -- +(0, 2.75);
  \node at (1.75, 0) {$\dots$};
    \node at (1.75, 6) {$\dots$};
      \node at (1.75, 2) {$\dots$};
  \node at (1.75, 4) {$\dots$};
  \tikztilting{.75}{0.9}{1.52}{1}{$\scriptstyle{n-w}$}
  \tikztilting{.75}{5.1}{1.52}{1}{$\scriptstyle{n-w}$}
  \tikztilting{.00}{2.9}{1.2}{1}{$\scriptstyle{n-(2a_i-1)w}$}
\end{scope}
}}.
\end{equation}
Let $Y_n^i\in\End(E_{n-1}\otimes\id_1)$ be defined by
\begin{equation}
    Y_n^0 = \NB{\tikz[scale = 0.5]{\begin{scope}
  \draw (-2, -0.5) -- + (0,7);
  \draw (.25, -0.5) -- + (0,7);
    \draw (1.25, -0.5) -- + (0,7);
  \node at (-1, 0) {$\dots$};
  \node at (-1, 6) {$\dots$};
  \node at (-1, 2) {$\dots$};
  \node at (-1, 4) {$\dots$};
    \node at (2.2, 2.5) {$\scriptstyle a_0-1$};
        \node at (2.2, 3.45) {$\scriptstyle a_0-1$};
  \draw (2.25,.5) -- +(0, 1.25) arc (0:180:0.25) -- +(0, -1.25);
  \draw (2.25,5.5) -- +(0, -1.25) arc (0:-180:0.25) -- +(0, 1.25);
    \draw (4,-0.5) -- +(0, 2.25) arc (0:180:0.25) -- +(0, -2.25);
  \draw (4,6.5) -- +(0, -2.25) arc (0:-180:0.25) -- +(0, 2.25);
  \node at (.75, 0) {$\dots$};
    \node at (.75, 6) {$\dots$};
      \node at (.75, 2) {$\dots$};
  \node at (.75, 4) {$\dots$};
  \tikztilting{.75}{0.9}{2}{1}{$\scriptstyle{n-1}$}
  \tikztilting{.75}{5.1}{2}{1}{$\scriptstyle{n-1}$}
  \tikztilting{-.60}{2.9}{1}{1}{$\scriptstyle{n-2a_0+1}$}

\end{scope}
}}, \qquad \text{and} \qquad
    Y_n^i = \NB{\tikz[scale = 0.5]{\begin{scope}
  \draw (-2, -0.5) -- + (0,7);
  \draw (.25, -0.5) -- + (0,7);
    \draw (1.25, -0.5) -- + (0,7);
  \node at (-1, 0) {$\dots$};
  \node at (-1, 6) {$\dots$};
  \node at (-1, 2) {$\dots$};
  \node at (-1, 4) {$\dots$};
    \node at (2.2, 2.5) {$\scriptstyle a_i w$};
        \node at (2.2, 3.45) {$\scriptstyle a_i w$};
          \node at (3.75, 2.5) {$\scriptstyle  w$};
        \node at (3.75, 3.45) {$\scriptstyle  w$};
  \draw (2.25,.5) -- +(0, 1.25) arc (0:180:0.25) -- +(0, -1.25);
  \draw (2.25,5.5) -- +(0, -1.25) arc (0:-180:0.25) -- +(0, 1.25);
    \draw (4,-0.5) -- +(0, 2.25) arc (0:180:0.25) -- +(0, -2.25);
  \draw (4,6.5) -- +(0, -2.25) arc (0:-180:0.25) -- +(0, 2.25);
  \node at (.75, 0) {$\dots$};
    \node at (.75, 6) {$\dots$};
      \node at (.75, 2) {$\dots$};
  \node at (.75, 4) {$\dots$};
  \tikztilting{.75}{0.9}{2}{1}{$\scriptstyle{n-w}$}
  \tikztilting{.75}{5.1}{2}{1}{$\scriptstyle{n-w}$}
  \tikztilting{-.60}{2.9}{1.2}{1}{$\scriptstyle{n-(2a_i-1)w}$}

\end{scope}
}} \qquad \text{if $i>0$}.
\end{equation}
The diagrams $X_n^i$ and $Y_n^i$ are obtained from the diagrams in
\cite[Definition~4.14]{SuttonTubbenhauerWedrichZhu} after specializing
$\ell=p$, $\xi=-1$ and replacing $v$ by $n$.

Suppose first that $a_i>1$ and define
\[
  H_n^i:=(a_i-1)Y_n^i
\]
and
\[
  \epsilon_{n,i}:=
  \begin{cases}
    1,
      &i<j\text{ and }2\leq a_i\leq p-2,\\
    0,
      &i=j\text{ or }a_i=p-1.
  \end{cases}
\]
Then the primitive idempotent projecting onto the summand
$T(n-2p^i)$ is
\begin{equation}
  \label{eq:tilting-B-QRSW-normalization}
  B_n^i
  =
  \frac{a_i-1}{a_i}\,X_n^i
  +
  \epsilon_{n,i}\frac{2}{a_i^2}\,H_n^i.
\end{equation}

Suppose next that $a_i=1$. If $i=j$, we set
\[
  A_n^j=\widehat A_n^j=0.
\]
Otherwise, when $i<j$, define
\begin{equation}
    A_n^0 = \NB{\tikz[scale = 0.5]{\begin{scope}
  \draw (0, -0.5) -- + (0,7);
  \draw (1.5, -0.5) -- + (0,7);
  \node at (.75, -0.25) {$\dots$};
  \node at (.75, 6.25) {$\dots$};
  \node at (.75, 2) {$\dots$};
  \node at (.75, 4) {$\dots$};
   \draw (2.75, -0.5) -- + (0,4);
  \draw (2.75,6) -- +(0, -1.5) arc (0:-180:0.25) -- +(0, 1.5);
  \draw (2.75,3.5) -- +(0, 0.25) arc (180:90:0.5);
    \draw (3.75,6.5) -- +(0, -1.75) arc (0:-90:0.5);
     \draw (3.5,-0.5) -- +(0, 2) arc (0:180:0.25) -- +(0, -2);
    \tikztilting{1.45}{0.5}{1.1}{1}{$\scriptstyle n-1$}
    \tikztilting{1.45}{5.5}{1.1}{1}{$\scriptstyle n-1$}
    \tikztilting{1.45}{3}{1.1}{1}{$\scriptstyle n-2$}
\end{scope}
}}
+ \quad \NB{\tikz[scale = 0.5]{
\begin{scope}
  \draw (0, -0.5) -- + (0,7);
  \draw (1.5, -0.5) -- + (0,7);
  \node at (.75, -0.25) {$\dots$};
  \node at (.75, 6.25) {$\dots$};
  \node at (.75, 2) {$\dots$};
  \node at (.75, 4) {$\dots$};
  \draw (2.75,0) -- +(0, 1.5) arc (0:180:0.25) -- +(0, -.5);
  \draw (2.75,6) -- +(0, -1.5) arc (0:-180:0.25) -- +(0, 1.5);
  \draw (2.75,3.5) -- +(0, 0.25) arc (180:90:0.5);
    \draw (3.75,6.5) -- +(0, -1.75) arc (0:-90:0.5);
      \draw (2.75,2.5) -- +(0, -0.25) arc (180:270:0.5);
    \draw (3.75,-0.5) -- +(0, 1.75) arc (0:90:0.5);
    \tikztilting{1.45}{0.5}{1.1}{1}{$\scriptstyle n-1$}
    \tikztilting{1.45}{5.5}{1.1}{1}{$\scriptstyle n-1$}
    \tikztilting{1.45}{3}{1.1}{1}{$\scriptstyle n-2$}
\end{scope}
}}
\quad \text{and}\quad
    A_n^i = \NB{\tikz[scale = 0.5]{\begin{scope}
  \draw (0, -0.5) -- + (0,7);
  \draw (1.5, -0.5) -- + (0,7);
  \node at (.75, -0.25) {$\dots$};
  \node at (.75, 6.25) {$\dots$};
  \node at (.75, 2) {$\dots$};
  \node at (.75, 4) {$\dots$};
  
  \node at (3.5, 4) {$\scriptstyle  w$};
    \node at (3.5, 2) {$\scriptstyle  w$};
      \node at (2.5, 4) {$\scriptstyle  w$};

   \draw (2.75, -0.5) -- + (0,4);
  \draw (2.75,6) -- +(0, -1.5) arc (0:-180:0.25) -- +(0, 1.5);
  \draw (2.75,3.5) -- +(0, 0.25) arc (180:90:0.5);
    \draw (3.75,6.5) -- +(0, -1.75) arc (0:-90:0.5);
     \draw (3.5,-0.5) -- +(0, 2) arc (0:180:0.25) -- +(0, -2);
    \tikztilting{1.45}{0.5}{1.1}{1}{$\scriptstyle n-w$}
    \tikztilting{1.45}{5.5}{1.1}{1}{$\scriptstyle n-w$}
    \tikztilting{1.45}{3}{1.1}{1}{$\scriptstyle n-2w$}
\end{scope}
}}
+ \quad \NB{\tikz[scale = 0.5]{\begin{scope}
  \draw (0, -0.5) -- + (0,7);
  \draw (1.5, -0.5) -- + (0,7);
  \node at (.75, -0.25) {$\dots$};
  \node at (.75, 6.25) {$\dots$};
  \node at (.75, 2) {$\dots$};
  \node at (.75, 4) {$\dots$};
\node at (3.5, 4) {$\scriptstyle  w$};
\node at (3.5, 2) {$\scriptstyle  w$};
\node at (2.5, 4) {$\scriptstyle  w$};
\node at (2.5, 2) {$\scriptstyle  w$};
\draw (2.75,0) -- +(0, 1.5) arc (0:180:0.25) -- +(0, -.5);
  \draw (2.75,6) -- +(0, -1.5) arc (0:-180:0.25) -- +(0, 1.5);
  \draw (2.75,3.5) -- +(0, 0.25) arc (180:90:0.5);
    \draw (3.75,6.5) -- +(0, -1.75) arc (0:-90:0.5);
      \draw (2.75,2.5) -- +(0, -0.25) arc (180:270:0.5);
    \draw (3.75,-0.5) -- +(0, 1.75) arc (0:90:0.5);
    \tikztilting{1.45}{0.5}{1.1}{1}{$\scriptstyle n-w$}
    \tikztilting{1.45}{5.5}{1.1}{1}{$\scriptstyle n-w$}
    \tikztilting{1.45}{3}{1.1}{1}{$\scriptstyle n-2w$}
\end{scope}
}}
\quad \text{ for } i>0.
\end{equation}
Let $(A_n^i)^\ell$ denote the reflection of $A_n^i$ in a
horizontal line. The diagrammatic computations in the proof of
\cite[Theorem~4.15]{SuttonTubbenhauerWedrichZhu} give
\begin{equation}
  (A_n^i)^2=A_n^i,
  \qquad
  \bigl((A_n^i)^\ell\bigr)^2=(A_n^i)^\ell,
  \qquad
  (A_n^i)^\ell A_n^i=0.
\end{equation}
The opposite product $A_n^i(A_n^i)^\ell$ need not vanish. We therefore
orthogonalize the second idempotent and define
\begin{equation}
  \label{eq:tilting-Ahat}
  \widehat A_n^i
  :=
  (A_n^i)^\ell
  -
  A_n^i(A_n^i)^\ell.
\end{equation}
Then
\begin{equation}
  (A_n^i)^2=A_n^i,
  \qquad
  (\widehat A_n^i)^2=\widehat A_n^i,
  \qquad
  A_n^i\widehat A_n^i
  =
  \widehat A_n^iA_n^i
  =0.
\end{equation}
Thus $A_n^i$ and $\widehat A_n^i$ are mutually orthogonal primitive
idempotents, and both have image isomorphic to $T(n-2p^i)$.

Finally, set
\[
  T_n^i:=
  \begin{cases}
    0,
      &a_i=0,\\[2pt]
    A_n^i+\widehat A_n^i,
      &a_i=1,\\[2pt]
    B_n^i,
      &a_i>1.
  \end{cases}
\]

\begin{thm}[Tilting Jones--Wenzl recursion]
  \label{thm:tilting-JW-recursion}
  Let $n=[a_j,\ldots,a_0]_p$. There is a decomposition
  \begin{equation}
    \label{eq:tilting-fusion-idempotents}
    E_{n-1}\otimes\id_1
    =
    E_n
    +
    \sum_{i=0}^{\tl(n)}T_n^i
  \end{equation}
  into mutually orthogonal idempotents.

  For $a_i=1$, the idempotent
  $T_n^i=A_n^i+\widehat A_n^i$ is the sum of two mutually orthogonal
  primitive idempotents; for $a_i>1$, $T_n^i=B_n^i$ is primitive.
  Consequently,
  \begin{equation}
    \label{eq:tilting-JW-inductive}
    E_n
    =
    E_{n-1}\otimes\id_1
    -
    \sum_{i=0}^{\tl(n)}T_n^i.
  \end{equation}
  Under the Temperley--Lieb--tilting equivalence,
  \[
    \operatorname{im}(E_n)\cong T(n),
    \qquad
    \operatorname{im}(T_n^i)
    \cong
    T(n-2p^i)^{\oplus x_i},
  \]
  where
  \[
    x_i=
    \begin{cases}
      0,
        &a_i=0
          \text{ or }(i=j\text{ and }a_j=1),\\
      2,
        &a_i=1\text{ and }i<j,\\
      1,
        &a_i>1.
    \end{cases}
  \]
  Equivalently,
  \[
    T(n-1)\otimes T(1)
    \cong
    T(n)
    \oplus
    \bigoplus_{i=0}^{\tl(n)}
       T(n-2p^i)^{\oplus x_i}.
  \]
\end{thm}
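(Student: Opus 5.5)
The plan is to derive the theorem from the Sutton--Tubbenhauer--Wedrich--Zhu recursion \cite[Theorem~4.15]{SuttonTubbenhauerWedrichZhu}, specialized to $\ell=p$ and $\xi=-1$. The only new input is a check that the normalizations and the orthogonalization $\widehat A_n^i$ chosen above keep everything mutually orthogonal. The argument is an induction on $n$. It starts from $E_0=1$ and $E_1=\id_1$, and we assume $E_{n-1}$ is the primitive idempotent with image $T(n-1)$.

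\emph{Step 1: the idempotents $T_n^i$ and their images.} Every $X_n^i$, $Y_n^i$ and $A_n^i$ factors through $E_{n-w}\otimes \id_{w}$ on the top and on the bottom, where $w=p^i$. Since $E_{n-1}$ absorbs $E_{n-w}\otimes\id_{w-1}$, these elements lie in $(E_{n-1}\otimes\id_1)\End(n)(E_{n-1}\otimes\id_1)$.

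For $a_i>1$, we quote from \cite{SuttonTubbenhauerWedrichZhu}, after specialization, the evaluations of $(X_n^i)^2$, $X_n^iH_n^i$, $H_n^iX_n^i$ and $(H_n^i)^2$. These are partial traces of the lower projectors. From them, $B_n^i$ as in \eqref{eq:tilting-B-QRSW-normalization} is idempotent. The coefficients $\frac{a_i-1}{a_i}$ and $\epsilon_{n,i}\frac{2}{a_i^2}$ are exactly what is needed to cancel the quantum-number factors at $\xi=-1$, where $[r]_{-1}=\pm r$.

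For $a_i=1$, the identities $(A_n^i)^2=A_n^i$, $((A_n^i)^\ell)^2=(A_n^i)^\ell$ and $(A_n^i)^\ell A_n^i=0$ are quoted. A direct expansion then gives the claims about $\widehat A_n^i$:
\begin{equation*}
(\widehat A_n^i)^2=(A_n^i)^\ell-A_n^i(A_n^i)^\ell-0+0=\widehat A_n^i,
\end{equation*}
together with $A_n^i\widehat A_n^i=0$ and $\widehat A_n^iA_n^i=0$. Conjugation by $1-A_n^i(A_n^i)^\ell$ shows that $\widehat A_n^i$ is conjugate to $(A_n^i)^\ell$, so the two have the same image. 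Each of $A_n^i$ and $(A_n^i)^\ell$ factors through $E_{n-2w}\otimes\id_{2w}$ via maps whose composite in the other order is $E_{n-2w}$. It follows that both images are isomorphic to $T(n-2w)$, hence both idempotents are primitive.

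\emph{Step 2: orthogonality and completeness.} Next we show $T_n^iT_n^k=0$ for $i\neq k$, and $E_nT_n^i=0$. Define $E_n$ by \eqref{eq:tilting-JW-inductive}. Then \eqref{eq:tilting-fusion-idempotents} holds by definition, and $E_n$ is an idempotent orthogonal to all $T_n^i$ as soon as the $T_n^i$ are pairwise orthogonal.

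To get orthogonality across different $i$, we use that $T_n^i$ factors through $T(n-2p^i)$. The relevant Hom spaces between distinct $T(n-2p^i)$ inside $E_{n-1}\otimes \id_1$ are killed by the compositions that appear, and the vanishing reduces to cup--cap with $E_{n-1}$ on the top of the cap. Concretely, the composite $T_n^iT_n^k$ contains a partial trace of $E_{n-p^i}$ capped against a cup. By the absorption and partial-trace properties of the mixed projectors in \cite{SuttonTubbenhauerWedrichZhu}, this partial trace vanishes. This step is where I expect the main obstacle. The paper's normalizations differ from STWZ, notably in the correction term $H_n^i$ and in $\widehat A_n^i$, so one must recheck that the cross terms $A_n^i$ against $B_n^k$, and $\widehat A_n^i$ against the rest, still vanish. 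I would handle these one by one using the factorization through $E_{n-2w}$ and the fact that a cap on $E_{n-w}$ that meets fewer than $w$ strands vanishes.

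\emph{Step 3: identifying the images.} Every summand in \eqref{eq:tilting-fusion-idempotents} is now orthogonal, and by Step~1 the image of $T_n^i$ is $T(n-2p^i)^{\oplus x_i}$. We compare this with the Clebsch--Gordan rule for $T(n-1)\otimes T(1)$ recalled in the previous subsection. By Krull--Schmidt, together with a character count (the image of $E_n$ has highest weight $n$ with multiplicity one), the image of $E_n$ must be $T(n)$. In particular $E_n$ is primitive, which completes the induction and gives the stated tensor product decomposition.
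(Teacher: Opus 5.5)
Your overall plan matches the paper's. You quote STWZ for $B_n^i$, $A_n^i$ and $(A_n^i)^\ell$, orthogonalize via $\widehat A_n^i$, take the complement, and identify $\operatorname{im}(E_n)$ by Krull--Schmidt against the Clebsch--Gordan rule. Your conjugation argument is a valid substitute for the paper's relations $\widehat A_n^i(A_n^i)^\ell=\widehat A_n^i$ and $(A_n^i)^\ell\widehat A_n^i=(A_n^i)^\ell$. Writing $A=A_n^i$ and $F=(A_n^i)^\ell$, one has $(AF)^2=A(FA)F=0$, so $1-AF$ has inverse $1+AF$, and $(1-AF)F(1+AF)=F-AF$.

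\textbf{Gap in Step~2.} You do not actually prove cross-block orthogonality, and the worry that led you to re-derive it is misplaced. In the paper, $B_n^i$ (including $H_n^i$) is the STWZ idempotent, unchanged; the only new ingredient is $\widehat A_n^i$. So the two-sided orthogonality of $A_n^i$, $(A_n^i)^\ell$ and the other blocks' idempotents is part of the quoted input. What remains is formal: if $C$ satisfies $CA=AC=CF=FC=0$, then $C\widehat A=CF-(CA)F=0$ and $\widehat A C=FC-A(FC)=0$. That one-line observation is the missing idea.

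The diagrammatic route you sketch instead is not viable as stated. Hom spaces between distinct tilting summands need not vanish (e.g.\ $T(p)\to T(p-2)$ is nonzero), so orthogonality is a property of the specific idempotents, not of the summand structure. Mixed projectors are also not annihilated by caps in general. Since $T(p-1)\otimes T(1)\cong T(p)$, one has $E_p=E_{p-1}\otimes\id_1$, and capping its last two strands is nonzero: by adjunction it corresponds, up to sign, to $E_{p-1}$. You neither state nor prove the precise vanishing statements you would need.

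\textbf{Error in Step~1.} The absorption $E_{n-1}(E_{n-w}\otimes\id_{w-1})=E_{n-1}$ lets you insert $E_{n-w}\otimes\id_{w-1}$ next to $E_{n-1}$. It does not show that a diagram bounded only by $E_{n-w}\otimes\id_w$ is fixed by $E_{n-1}\otimes\id_1$. The inference runs the wrong way, since $\operatorname{im}(E_{n-1}\otimes\id_1)\subseteq\operatorname{im}(E_{n-w}\otimes\id_w)$. Two-sided corner membership $(E_{n-1}\otimes\id_1)T_n^i=T_n^i=T_n^i(E_{n-1}\otimes\id_1)$ is exactly what makes $E_{n-1}\otimes\id_1-\sum_iT_n^i$ an idempotent orthogonal to each $T_n^i$. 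In the paper it holds because $X_n^i$, $Y_n^i$ and $A_n^i$ are taken in $\End(E_{n-1}\otimes\id_1)$, i.e.\ sandwiched between copies of $E_{n-1}\otimes\id_1$. Relatedly, your image computation for $A_n^i$ should factor through the object $E_{n-2w}$ on $n-2w$ strands, not through $E_{n-2w}\otimes\id_{2w}$, which has $n$ strands.
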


\begin{proof}
The formulas for the lower summands are obtained by specializing the
construction of \cite[Theorems~3.19 and~4.15]{SuttonTubbenhauerWedrichZhu}
to mixed characteristic $(p,p)$ and Temperley--Lieb parameter $\xi=-1$.
For $a_i>1$, the primitive idempotents $B_n^i$ are unchanged.

Suppose now that $a_i=1$ and $i<j$. For brevity, write
\[
  A:=A_n^i,
  \qquad
  F:=(A_n^i)^\ell.
\]
The diagrammatic computations in the proof of
\cite[Theorem~4.15]{SuttonTubbenhauerWedrichZhu} give
\[
  A^2=A,
  \qquad
  F^2=F,
  \qquad
  FA=0.
\]
The assertion that the opposite product $AF$ also vanishes does not
follow from the displayed multiplication relations. Instead, define
\[
  \widehat A:=F-AF.
\]
Then
\[
\begin{aligned}
  \widehat A^2
  &=(F-AF)^2\\
  &=F^2-FAF-AF^2+AFAF\\
  &=F-AF\\
  &=\widehat A,
\end{aligned}
\]
where we used $F^2=F$ and $FA=0$. Moreover,
\[
  A\widehat A
  =AF-A^2F
  =0,
  \qquad
  \widehat A A
  =FA-AFA
  =0.
\]
Thus $A_n^i$ and $\widehat A_n^i$ are mutually orthogonal idempotents.

The idempotents $F=(A_n^i)^\ell$ and $\widehat A_n^i$ define isomorphic
objects in the Karoubi envelope. Indeed,
\[
  \widehat A_n^i (A_n^i)^\ell=\widehat A_n^i,
  \qquad
  (A_n^i)^\ell\widehat A_n^i=(A_n^i)^\ell.
\]
Hence $\widehat A_n^i$ has image isomorphic to the image of
$(A_n^i)^\ell$, namely $T(n-2p^i)$. In particular,
$\widehat A_n^i$ is primitive.

The orthogonality relations involving distinct lower-summand blocks are
unchanged. More precisely, if a lower-summand idempotent $C$ is
orthogonal on both sides to $A_n^i$ and $(A_n^i)^\ell$, then it is also
orthogonal on both sides to
\[
  \widehat A_n^i
  =
  (A_n^i)^\ell-A_n^i(A_n^i)^\ell.
\]
Thus, together with the unchanged primitive idempotents $B_n^i$ for
$a_i>1$, the idempotents $T_n^i$ are mutually orthogonal.

Their sum is therefore an idempotent in $E_{n-1}\otimes\id_1$, and its
complement
\[
  E_n
  =
  E_{n-1}\otimes\id_1
  -
  \sum_{i=0}^{\tl(n)}T_n^i
\]
is again an idempotent.

The lower idempotents have images
\[
  \operatorname{im}(T_n^i)
  \cong
  T(n-2p^i)^{\oplus x_i},
\]
with the multiplicities $x_i$ stated above. By the Clebsch--Gordan rule,
the complementary summand is therefore isomorphic to $T(n)$. Hence
\[
  \operatorname{im}(E_n)\cong T(n),
\]
which gives the stated Tilting Jones--Wenzl recursion.
\end{proof}

\subsection{Derivation on foams} 
Fix a parameter $c \in \Bbbk$.
In \cite[Section 4.2]{QRSW2}, we introduced a family of $\mathfrak{sl}(2)$-actions on foams compatible with $\mathfrak{gl}(N)$-foam evaluation.  We review these formulas in the special case $N=2$. Define an operator $\dif_c$ on basic foams as follows. (In the setting of \cite[Section 4.2]{QRSW2}, parameters $t_1=c, t_2=1-c, t_3=\frac{1}{2}$, and the equivariant parameters are set to zero.)  For any $c \in \Bbbk$, set $\bar{c}=1-c$.  

\begin{subequations}
\begin{gather}
\label{eq:e-act-pol}  \dif_c\left(~\NB{\tikz[scale=1.5, font=\small]{\begin{scope}
  \draw (0,0) rectangle (1,1) coordinate [midway] (A); 
  \fill (A) circle (0.5mm) node[below] {};
\end{scope}}}~\right)
  = \ 
  0 \\
\label{eq:e-act-dig-cup}  \dif_c\left( ~\NB{\tikz[font=\tiny]{\begin{scope}
  \begin{scope}
    \coordinate (L) at (0,0);
    \coordinate (R) at (2,0);
    \coordinate (ML) at (0.5, 0);
    \coordinate (MR) at (1.5, 0);
    \draw[double] [->-] (L) -- (ML);
    \draw[double] [->-] (MR) -- (R); 
    \draw[->-] (ML).. controls + (0.4, 0.4) and +(-0.2, 0.4) .. (MR);
    \draw[->-] (ML).. controls + (0.2, -0.4) and +(-0.4, -0.4) .. (MR);
  \end{scope}  
 \begin{scope}[yshift = -1.3cm]
    \coordinate (LB) at (0,0);
    \coordinate (RB) at (2,0);
    \draw[double] [->-] (LB) -- (RB);
  \end{scope}  
  \draw[double]  (R) -- (RB);
  \draw[double]   (L) -- (LB);
  \draw[double] (ML) .. controls +(0, -1) and +(0, -1) .. (MR);
\end{scope}}}~\right)\ =\
 - c \cdot\ \NB{\tikz[font=\tiny]{\begin{scope}[font=\tiny]
  \begin{scope}
    \coordinate (L) at (0,0);
    \coordinate (R) at (2,0);
    \coordinate (ML) at (0.5, 0);
    \coordinate (MR) at (1.5, 0);
    \draw[double] [->-] (L) -- (ML);
    \draw[double] [->-] (MR) -- (R); 
    \draw[->-] (ML).. controls + (0.4, 0.4) and +(-0.2, 0.4) .. (MR)
    node[below, pos =0.3] {$\dotnewtoni[]$};
    \draw[->-] (ML).. controls +(0.2, -0.4) and +(-0.4, -0.4) .. (MR);
  \end{scope}  
 \begin{scope}[yshift = -1.3cm]
    \coordinate (LB) at (0,0);
    \coordinate (RB) at (2,0);
    \draw[double] [->-] (LB) -- (RB);
  \end{scope}  
  \draw[double]  (R) -- (RB);
  \draw[double]  (L) -- (LB);
  \draw[double] (ML) .. controls +(0, -1) and +(0, -1) .. (MR);
\end{scope}


}} 
  - \bar{c} \cdot \ \NB{\tikz[font=\tiny]{\begin{scope}[font=\tiny]
  \begin{scope}
    \coordinate (L) at (0,0);
    \coordinate (R) at (2,0);
    \coordinate (ML) at (0.5, 0);
    \coordinate (MR) at (1.5, 0);
    \draw[double] [->-] (L) -- (ML);
    \draw[->-][double]  (MR) -- (R); 
    \draw[->-] (ML).. controls + (0.4, 0.4) and +(-0.2, 0.4) .. (MR);
    \draw[->-] (ML).. controls +(0.2, -0.4) and +(-0.4, -0.4) .. (MR)
    node[below, pos =0.75] {$\dotnewtoni[]$};
  \end{scope}  
 \begin{scope}[yshift = -1.3cm]
    \coordinate (LB) at (0,0);
    \coordinate (RB) at (2,0);
    \draw[double] [->-] (LB) -- (RB);
  \end{scope}  
  \draw[double]  (R) -- (RB);
  \draw[double]  (L) -- (LB);
  \draw[double] (ML) .. controls +(0, -1) and +(0, -1) .. (MR);
\end{scope}

}}  \\
\label{eq:e-act-dig-cap}   \dif_c\left( ~\NB{\tikz[font=\tiny]{\begin{scope}
  \begin{scope}
    \coordinate (L) at (0,0);
    \coordinate (R) at (2,0);
    \coordinate (ML) at (0.5, 0);
    \coordinate (MR) at (1.5, 0);
    \draw[double] [->-] (L) -- (ML);
    \draw[double] [->-] (MR) -- (R); 
    \draw[->-] (ML).. controls + (0.4, 0.4) and +(-0.2, 0.4) .. (MR);
    \draw[->-] (ML).. controls + (0.2, -0.4) and +(-0.4, -0.4) .. (MR);
  \end{scope}  
 \begin{scope}[yshift = 1.3cm]
    \coordinate (LB) at (0,0);
    \coordinate (RB) at (2,0);
    \draw[double]  (LB) -- (RB);
  \end{scope}  
  \draw[double]  (R) -- (RB);
  \draw[double]  (L) -- (LB);
  \draw[double] (ML) .. controls +(0, 1) and +(0, 1) .. (MR);
\end{scope}}}~\right)\ =\
    -\bar{c}
  \cdot\ \NB{\tikz[font=\tiny]{\begin{scope}[font =\tiny]
  \begin{scope}
    \coordinate (L) at (0,0);
    \coordinate (R) at (2,0);
    \coordinate (ML) at (0.5, 0);
    \coordinate (MR) at (1.5, 0);
    \draw[double] [->-] (L) -- (ML);
    \draw[double] [->-] (MR) -- (R); 
    \draw[->-] (ML).. controls + (0.4, 0.4) and +(-0.2, 0.4) .. (MR)
    node[above, pos =0.3] {$\dotnewtoni[]$};
    \draw[->-] (ML).. controls + (0.2, -0.4) and +(-0.4, -0.4) .. (MR);
  \end{scope}  
 \begin{scope}[yshift = 1.3cm]
    \coordinate (LB) at (0,0);
    \coordinate (RB) at (2,0);
    \draw[double]  (LB) -- (RB);
  \end{scope}  
  \draw[double]  (R) -- (RB);
  \draw[double]  (L) -- (LB);
  \draw[double] (ML) .. controls +(0, 1) and +(0, 1) .. (MR);
\end{scope}

}}  
   - c
  \cdot \ \NB{\tikz[font=\tiny]{\begin{scope}[font =\tiny]
  \begin{scope}
    \coordinate (L) at (0,0);
    \coordinate (R) at (2,0);
    \coordinate (ML) at (0.5, 0);
    \coordinate (MR) at (1.5, 0);
    \draw[double] [->-] (L) -- (ML);
    \draw[double] [->-] (MR) -- (R); 
    \draw[->-] (ML).. controls + (0.4, 0.4) and +(-0.2, 0.4) .. (MR);
    \draw[->-] (ML).. controls + (0.2, -0.4) and +(-0.4, -0.4) .. (MR)
    node[above, pos =0.7, yshift = -0.5mm] {$\dotnewtoni[]$};
  \end{scope}  
 \begin{scope}[yshift = 1.3cm]
    \coordinate (LB) at (0,0);
    \coordinate (RB) at (2,0);
    \draw[double]  (LB) -- (RB);
  \end{scope}  
  \draw[double]  (R) -- (RB);
  \draw[double]  (L) -- (LB);
  \draw[double] (ML) .. controls +(0, 1) and +(0, 1) .. (MR);
\end{scope}

}}  \\
\label{eq:e-act-zip}   \dif_c\left(~ \NB{\tikz[font=\tiny]{\begin{scope}
  \begin{scope}
    \coordinate (L1) at (0.2,0.4);
    \coordinate (L2) at (0,0);
    \coordinate (R1) at (2.2,0.4);
    \coordinate (R2) at (2,0);
    \coordinate (ML) at (0.6, 0.2);
    \coordinate (MR) at (1.6, 0.2);
    \draw[double] [->-] (ML) -- (MR); 
    \draw (MR) .. controls +(0, 0) and +(-0.3,0) .. (R1) ;
    \draw (MR) .. controls +(0, 0) and +(-0.3,0) .. (R2);
    \draw (L1) .. controls +( 0.3, 0) and +(0,0) .. (ML);
    \draw (L2) .. controls +( 0.3, 0) and +(0,0) .. (ML);
  \end{scope}  
 \begin{scope}[yshift = -1cm]
    \coordinate (L1B) at (0.2,0.4);
    \coordinate (L2B) at (0,0);
    \coordinate (R1B) at (2.2,0.4);
    \coordinate (R2B) at (2,0);
    \draw[->-] (L1B) .. controls +( 0, 0) and +(0,0) .. (R1B); 
    \draw[->-] (L2B) .. controls +( 0, 0) and +(0,0) .. (R2B); 
 \end{scope}  
  \draw (R1) -- (R1B);
  \draw (R2) -- (R2B);
  \draw (L1) -- (L1B);
  \draw (L2) -- (L2B);
  \draw[double] (ML) .. controls +(0, -0.6) and +(0, -0.6) .. (MR);
\end{scope}

}}~\right)\ =\
   \bar{c} \cdot\ \NB{\tikz[font=\tiny]{\begin{scope}
  \begin{scope}
    \coordinate (L1) at (0.2,0.4);
    \coordinate (L2) at (0,0);
    \coordinate (R1) at (2.2,0.4);
    \coordinate (R2) at (2,0);
    \coordinate (ML) at (0.6, 0.2);
    \coordinate (MR) at (1.6, 0.2);
    \draw[double] [->-] (ML) -- (MR); 
    \draw (MR) .. controls +(0, 0) and +(-0.3,0) .. (R1) ;
    \draw (MR) .. controls +(0, 0) and +(-0.3,0) .. (R2);
    \draw (L1) .. controls +( 0.3, 0) and +(0,0) .. (ML);
    \draw (L2) .. controls +( 0.3, 0) and +(0,0) .. (ML);
  \end{scope}  
 \begin{scope}[yshift = -1cm]
    \coordinate (L1B) at (0.2,0.4);
    \coordinate (L2B) at (0,0);
    \coordinate (R1B) at (2.2,0.4);
    \coordinate (R2B) at (2,0);
    \draw[->-] (L1B) .. controls +( 0, 0) and +(0,0) .. (R1B); 
    \draw[->-] (L2B) .. controls +( 0, 0) and +(0,0) .. (R2B); 
 \end{scope}  
  \draw (R1) -- (R1B) node [pos = 0.2, left] {$\dotnewtoni[]$};
  \draw (R2) -- (R2B);
  \draw (L1) -- (L1B);
  \draw (L2) -- (L2B);
  \draw[double] (ML) .. controls +(0, -0.6) and +(0, -0.6) .. (MR);
\end{scope}

}} 
  +  c \cdot \ \NB{\tikz[font=\tiny]{\begin{scope}
  \begin{scope}
    \coordinate (L1) at (0.2,0.4);
    \coordinate (L2) at (0,0);
    \coordinate (R1) at (2.2,0.4);
    \coordinate (R2) at (2,0);
    \coordinate (ML) at (0.6, 0.2);
    \coordinate (MR) at (1.6, 0.2);
    \draw[double] [->-] (ML) -- (MR); 
    \draw (MR) .. controls +(0, 0) and +(-0.3,0) .. (R1) ;
    \draw (MR) .. controls +(0, 0) and +(-0.3,0) .. (R2);
    \draw (L1) .. controls +( 0.3, 0) and +(0,0) .. (ML);
    \draw (L2) .. controls +( 0.3, 0) and +(0,0) .. (ML);
  \end{scope}  
 \begin{scope}[yshift = -1cm]
    \coordinate (L1B) at (0.2,0.4);
    \coordinate (L2B) at (0,0);
    \coordinate (R1B) at (2.2,0.4);
    \coordinate (R2B) at (2,0);
    \draw[->-] (L1B) .. controls +( 0, 0) and +(0,0) .. (R1B); 
    \draw[->-] (L2B) .. controls +( 0, 0) and +(0,0) .. (R2B) 
    node [pos = 0.2, above] {$\dotnewtoni[]$};
 \end{scope}  
  \draw (R1) -- (R1B); 
  \draw (R2) -- (R2B);
  \draw (L1) -- (L1B);
  \draw (L2) -- (L2B);
  \draw[double] (ML) .. controls +(0, -0.6) and +(0, -0.6) .. (MR);
\end{scope}

}}  \\
\label{eq:e-act-unzip}   \dif_c\left(~ \NB{\tikz[font=\tiny]{\begin{scope}
  \begin{scope}
    \coordinate (L1) at (0.2,0.4);
    \coordinate (L2) at (0,0);
    \coordinate (R1) at (2.2,0.4);
    \coordinate (R2) at (2,0);
    \coordinate (ML) at (0.6, 0.2);
    \coordinate (MR) at (1.6, 0.2);
    \draw[double]  [->-] (ML) -- (MR); 
    \draw (MR) .. controls +(0, 0) and +(-0.3,0) .. (R1) ;
    \draw (MR) .. controls +(0, 0) and +(-0.3,0) .. (R2);
    \draw (L1) .. controls +( 0.3, 0) and +(0,0) .. (ML);
    \draw (L2) .. controls +( 0.3, 0) and +(0,0) .. (ML);
  \end{scope}  
 \begin{scope}[yshift = 1cm]
    \coordinate (L1B) at (0.2,0.4);
    \coordinate (L2B) at (0,0);
    \coordinate (R1B) at (2.2,0.4);
    \coordinate (R2B) at (2,0);
    \draw[->-] (L1B) .. controls +( 0, 0) and +(0,0) .. (R1B); 
    \draw[->-] (L2B) .. controls +( 0, 0) and +(0,0) .. (R2B); 
 \end{scope}  
  \draw (R1) -- (R1B);
  \draw (R2) -- (R2B);
  \draw (L1) -- (L1B);
  \draw (L2) -- (L2B);
  \draw[double] (ML) .. controls +(0, 0.6) and +(0, 0.6) .. (MR);
\end{scope}
}}~\right)\ =\
   c \cdot\ \NB{\tikz[font=\tiny]{\begin{scope}
  \begin{scope}
    \coordinate (L1) at (0.2,0.4);
    \coordinate (L2) at (0,0);
    \coordinate (R1) at (2.2,0.4);
    \coordinate (R2) at (2,0);
    \coordinate (ML) at (0.6, 0.2);
    \coordinate (MR) at (1.6, 0.2);
    \draw[double] [->-] (ML) -- (MR); 
    \draw (MR) .. controls +(0, 0) and +(-0.3,0) .. (R1) ;
    \draw (MR) .. controls +(0, 0) and +(-0.3,0) .. (R2);
    \draw (L1) .. controls +( 0.3, 0) and +(0,0) .. (ML);
    \draw (L2) .. controls +( 0.3, 0) and +(0,0) .. (ML); 
  \end{scope}  
 \begin{scope}[yshift = 1cm]
    \coordinate (L1B) at (0.2,0.4);
    \coordinate (L2B) at (0,0);
    \coordinate (R1B) at (2.2,0.4);
    \coordinate (R2B) at (2,0);
    \draw[->-] (L1B) .. controls +( 0, 0) and +(0,0) .. (R1B) node [left, pos
    = 0] 
  [pos=0.8, below] {$\dotnewtoni[]$};
    \draw[->-] (L2B) .. controls +( 0, 0) and +(0,0) .. (R2B); 
 \end{scope}  
  \draw (R1) -- (R1B);
  \draw (R2) -- (R2B);
  \draw (L1) -- (L1B);
  \draw (L2) -- (L2B);
  \draw[double] (ML) .. controls +(0, 0.6) and +(0, 0.6) .. (MR);
\end{scope}

}} 
   +  \bar{c} \cdot \ \NB{\tikz[font=\tiny]{\begin{scope}
  \begin{scope}
    \coordinate (L1) at (0.2,0.4);
    \coordinate (L2) at (0,0);
    \coordinate (R1) at (2.2,0.4);
    \coordinate (R2) at (2,0);
    \coordinate (ML) at (0.6, 0.2);
    \coordinate (MR) at (1.6, 0.2);
    \draw[double] [->-] (ML) -- (MR); 
    \draw (MR) .. controls +(0, 0) and +(-0.3,0) .. (R1) ;
    \draw (MR) .. controls +(0, 0) and +(-0.3,0) .. (R2);
    \draw (L1) .. controls +( 0.3, 0) and +(0,0) .. (ML);
    \draw (L2) .. controls +( 0.3, 0) and +(0,0) .. (ML) node [above,
    pos=0.2] {$\dotnewtoni[]$};
  \end{scope}  
 \begin{scope}[yshift = 1cm]
    \coordinate (L1B) at (0.2,0.4);
    \coordinate (L2B) at (0,0);
    \coordinate (R1B) at (2.2,0.4);
    \coordinate (R2B) at (2,0);
    \draw[->-] (L1B) .. controls +( 0, 0) and +(0,0) .. (R1B); 
    \draw[->-] (L2B) .. controls +( 0, 0) and +(0,0) .. (R2B); 
 \end{scope}  
  \draw (R1) -- (R1B);
  \draw (R2) -- (R2B);
  \draw (L1) -- (L1B);
  \draw (L2) -- (L2B);
  \draw[double] (ML) .. controls +(0, 0.6) and +(0, 0.6) .. (MR);
\end{scope}

}}    \\
\label{eq:e-act-cup}   \dif_c\left( \NB{\tikz[font=\tiny, scale=1.2]{\begin{scope}
  \draw (0,0) arc (180 :0: 0.5cm and 0.2cm); 
  \draw[very thin] (0,0) arc (180 :0: 0.5cm and -0.6cm) node[pos=0.5,
  above] {};
  \draw (0,0) arc (180 :0: 0.5cm and -0.2cm);
\end{scope}

}}\right)\ =\
 \dif_c\left( \NB{\tikz[font=\tiny, scale=1.2]{\begin{scope}[-]
  \draw (0,0) arc (180 :0: 0.5cm and 0.2cm);
  \draw[very thin] (0,0) arc (180 :0: 0.5cm and 0.6cm) node[pos=0.5,
  below] {};
  \draw (0,0) arc (180 :0: 0.5cm and -0.2cm);
\end{scope}

}}\right)\ =\
 \dif_c\left( \NB{\tikz[font=\tiny, scale=1.2]{\begin{scope}[scale=0.6]
\tdplotsetmaincoords{70}{25}
\begin{scope}[scale = 1.5, tdplot_main_coords]
  \tikzset{yxplane/.style={canvas is xy plane at z=#1}}
  \begin{scope}[yxplane=1]
    \coordinate (AT) at ({cos(  45)}, {sin(  45)});
    \coordinate (BT) at ({cos(135)}, {sin(135)});
    \coordinate (CT) at ({cos(225)}, {sin(225)});
    \coordinate (DT) at ({cos(315)}, {sin(315)});
    \coordinate (aT) at (  0.3, 0);
    \coordinate (bT) at ( -0.3, 0);
    \draw (AT) .. controls (aT) and (bT) .. (BT)
    coordinate[pos=0.5] (eT); 
    \draw (DT) .. controls (aT) and (bT) .. (CT) coordinate[pos=0.5] (fT);
  \end{scope}

  \begin{scope}[yxplane=0]
    \coordinate (AM) at ({cos(  45)}, {sin(  45)});
    \coordinate (BM) at ({cos(135)}, {sin(135)});
    \coordinate (CM) at ({cos(225)}, {sin(225)});
    \coordinate (DM) at ({cos(315)}, {sin(315)});
    \coordinate (aM) at (0, 0.3);
    \coordinate (bM) at (0, -0.3);
    \draw (AM) .. controls (aM) and (bM) .. (DM) coordinate[pos=0.5] (eM);
    \draw (BM) .. controls (aM) and (bM) .. (CM) coordinate[pos=0.5] (fM);
\end{scope}
  \coordinate (OT) at (0,0, 0.5);
\draw (AM) -- (AT);
  \draw (BM) -- (BT);
  \draw (CM) -- (CT);
  \draw (DM) -- (DT);
  \draw[densely dotted] (eM) ..controls +(0,0,0.2) and + (0.1,0,0).. (OT);
  \draw[densely dotted] (fM) ..controls +(0,0,0.2) and + (-0.1,0,0).. (OT);
  \draw[densely dotted] (eT) ..controls +(0,0,-0.2) and + (0,0.1,0).. (OT);
  \draw[densely dotted] (fT) ..controls +(0,0,-0.2) and + (0,-0.1,0).. (OT);
\end{scope}  
\end{scope}

}}\right)\ =\
0
\end{gather}
\end{subequations}
Extend $\dif_c$ to a derivation on all $\mathfrak{gl}(2)$-foams by the Leibniz rule.

\subsection{Twists}
 It is possible to twist the $\dif_c$-action on the state space of a web.
This idea goes back to \cite{KRWitt} where
  the Witt-type action was twisted in order to make differentials
  occurring in link homology equivariant with respect to the Witt
  action.  Here we review this twisting
  in the simplified setting of an $H$-action.  
  For more details, see \cite[Section 3.3]{QRSW3}.

    A \emph{green-dotted} web is a web endowed with
    a finite collection $D$ of \emph{green dots}, i.e., marked
    points with multiplicities in $\Bbbk$ located in the interior
    of edges of the web.
    The action of $\dif_c$ on a facet adjacent to an edge carrying a green dot is twisted as follows:
\[
\label{eq:f-act-pol-twisthollow} \dif_c\left(\NB{\tikz[scale=1.5, font=\small]{\begin{scope}
  \draw[thin] (0,0.5) rectangle (1,1);
  \draw[thick] (0,1) -- (1,1)  coordinate [pos=0.4] (A);
  \filldraw[draw= green!50!black, fill = white] (A) circle (.5mm)
  node[below,  green!50!black] {$\lambda$};
\end{scope}}}\right) 
  =\lambda\cdot\  \NB{\tikz[scale=1.5, font=\small]{\begin{scope}
  \draw[thin] (0,0.5) rectangle (1,1);
  \draw[thick] (0,1) -- (1,1)  coordinate [pos=0.4] (A);
  \filldraw[draw= green!50!black, fill = white] (A) circle (.5mm)
  node[below,  green!50!black] {$\lambda$};
  \node (B) at (.75,.75) {$\dotnewtoni[]$};
\end{scope}

}} 
\]

 \section{Uncolored link homology at a root of unity}
In this section we construct a functorial categorification of the Jones polynomial at a $2p$th root of unity by equipping Khovanov homology with a $p$-differential.  
The results of this section are direct consequences of \cite{QRSW2, QRSW3}.

\subsection{Derivations on Khovanov homology}
An $\mathfrak{sl}(2)$-action on equivariant Khovanov--Rozansky homology was constructed in \cite{QRSW3}. In this preprint we use part of this action (we set $\dif_c$ to be the $f$-operator in \cite{QRSW3}) in the specialized case where $N=2$, the base ring is $\Bbbk$, a field of characteristic $p$ and the equivariant parameters are zero.

We refer the reader to \cite[Theorem 4.3]{QRSW3} to see how the action of $\mathfrak{sl}(2)$ is defined in the equivariant setting, and \cite[Section 6.1]{QRSW3} when the equivariant parameters are set to zero in characteristic $p$.  The green dots below indicate how the $\dif_c$-action is twisted on the state spaces associated to webs.

Khovanov homology is built out of resolving the two crossing diagrams:
\begin{subequations}
\begin{equation}
  T= \NB{\tikz[xscale = 0.6]{\begin{scope}[font=\tiny]
  \draw[->] (0.5, -0.5) ..controls +(0,0.3) and +(0,-0.3) .. (-0.5,
  0.5) node[pos=1, above] {} coordinate[pos =0.2] (t1);
  \fill[white] (0,0) circle (2mm);
  \draw[->] (-0.5, -0.5) ..controls +(0,0.3) and +(0,-0.3) .. (0.5,
  0.5) node[pos=1, above] {} coordinate[pos =0.2] (t2);
\end{scope}}} :=
\left(
    \NB{  \tikz[xscale = 3.5, yscale = 3]{
    \node (i0) at (0, 0) { $q^{-1}$ \NB{\tikz[font= \tiny,
  scale=0.6]{\begin{scope}
  \coordinate (bl) at (-0.5, -1);
  \coordinate (br) at ( 0.5, -1);
  \coordinate (tl) at (-0.5,  1);
  \coordinate (tr) at ( 0.5,  1);
    \coordinate (ml) at (-0.5,  -.8);
        \coordinate (Ml) at (-0.5,  .8);
 \coordinate (mr) at (0.5,  -.6);
\coordinate (Mr) at (0.5,  .6);
 
   \draw[>->] (bl) -- (tl) 
   node[below, pos
  =0] {} coordinate[pos = 0.5] (g1);
  \filldraw[draw= green!50!black, fill = white] (g1) circle (1mm)
  node[left, green!50!black] {$-c$};
  
  
    \draw[>->] (br) -- (tr) 
    node[below, pos
  =0] {} coordinate[pos = 0.5] (g2);
  \filldraw[draw= green!50!black, fill = white] (g2) circle (1mm)
  node[right, green!50!black] {$-\bar{c} $};
  

\end{scope}}} };
     \node (i1) at (-1, 0) {\NB{\tikz[font= \tiny,
  scale=0.6]{\begin{scope}
  \coordinate (bl) at (-0.5, -1);
  \coordinate (br) at ( 0.5, -1);
  \coordinate (bm) at (  0,-0.3);
  \coordinate (tl) at (-0.5,  1);
  \coordinate (tr) at ( 0.5,  1);
  \coordinate (tm) at (  0, 0.3);
  \draw[>-]  (bl) .. controls +( 0, 0.5) and +(0,0) .. (bm);
  \draw[>-]  (br) .. controls +( 0, 0.5) and +(0,0) .. (bm);
  \draw[<-]  (tl) .. controls +( 0, -0.5) and +(0,0) .. (tm);

  \draw[<-]  (tr) .. controls +( 0, -0.5) and +(0,0) .. (tm);
  \draw[double] (bm) -- (tm); 
 
\end{scope}}}[1] };
\draw[->] (i1) -- (i0) coordinate[pos=0.5] (a);
\node[above] at (a) {\NB{\tikz[font=\tiny, scale=.5]{\begin{scope}
  \begin{scope}
    \coordinate (L1) at (0.2,0.4);
    \coordinate (L2) at (0,0);
    \coordinate (R1) at (2.2,0.4);
    \coordinate (R2) at (2,0);
    \coordinate (ML) at (0.6, 0.2);
    \coordinate (MR) at (1.6, 0.2);
    \draw[double] (ML) -- (MR);
    \draw (MR) .. controls +(0, 0) and +(-0.3,0) .. (R1) ;
    \draw (MR) .. controls +(0, 0) and +(-0.3,0) .. (R2);
    \draw (L1) .. controls +( 0.3, 0) and +(0,0) .. (ML);
    \draw (L2) .. controls +( 0.3, 0) and +(0,0) .. (ML);
  \end{scope}  
 \begin{scope}[yshift = 1cm]
    \coordinate (L1B) at (0.2,0.4);
    \coordinate (L2B) at (0,0);
    \coordinate (R1B) at (2.2,0.4);
    \coordinate (R2B) at (2,0);
    \draw (L1B) .. controls +( 0, 0) and +(0,0) .. (R1B); 
    \draw (L2B) .. controls +( 0, 0) and +(0,0) .. (R2B);
 \end{scope}  
  \draw (R1) -- (R1B);
  \draw (R2) -- (R2B);
  \draw (L1) -- (L1B);
  \draw (L2) -- (L2B);
  \draw[double] (ML) .. controls +(0, 0.6) and +(0, 0.6) .. (MR);
\end{scope}}}};
  }}
\right) \ ,
    \end{equation}
    \begin{equation}
  T^{-1}= \NB{\tikz[xscale = 0.6]{\begin{scope}[font=\tiny]
  \draw[->] (-0.5, -0.5) ..controls +(0,0.3) and +(0,-0.3) .. (0.5,
  0.5);
  \fill[white] (0,0) circle (2mm);
  \draw[->] (0.5, -0.5) ..controls +(0,0.3) and +(0,-0.3) .. (-0.5,
  0.5);
\end{scope}}}:=
\left(  \NB{\tikz[xscale = 3.5, yscale = 3]{
    \node (i0) at (-1, 0) {  $q$\ \NB{\tikz[font= \tiny,
  scale=0.6]{\begin{scope}
  \coordinate (bl) at (-0.5, -1);
  \coordinate (br) at ( 0.5, -1);
  \coordinate (tl) at (-0.5,  1);
  \coordinate (tr) at ( 0.5,  1);
    \coordinate (ml) at (-0.5,  -.8);
        \coordinate (Ml) at (-0.5,  .8);
 \coordinate (mr) at (0.5,  -.6);
\coordinate (Mr) at (0.5,  .6);
 
   \draw[>->] (bl) -- (tl); 
  \filldraw[draw= green!50!black, fill = white] (g1) circle (1mm)
  node[left, green!50!black] {$\bar{c}$};
  
  
    \draw[>->] (br) -- (tr);
  \filldraw[draw= green!50!black, fill = white] (g2) circle (1mm)
  node[right, green!50!black] {$c$};
  

\end{scope}}} };
     \node (i1) at (0, 0) {  \NB{\tikz[font= \tiny,
  scale=0.6]{}}[-1] };
\draw[->] (i0) -- (i1) coordinate[pos=0.5] (b);
\node[above] at (b) {\NB{\tikz[font=\tiny, scale=.5]{\begin{scope}
  \begin{scope}
    \coordinate (L1) at (0.2,0.4);
    \coordinate (L2) at (0,0);
    \coordinate (R1) at (2.2,0.4);
    \coordinate (R2) at (2,0);
    \coordinate (ML) at (0.6, 0.2);
    \coordinate (MR) at (1.6, 0.2);
    \draw[double] (ML) -- (MR);
    \draw (MR) .. controls +(0, 0) and +(-0.3,0) .. (R1) ;
    \draw (MR) .. controls +(0, 0) and +(-0.3,0) .. (R2);
    \draw (L1) .. controls +( 0.3, 0) and +(0,0) .. (ML);
    \draw (L2) .. controls +( 0.3, 0) and +(0,0) .. (ML);
  \end{scope}  
 \begin{scope}[yshift = -1cm]
    \coordinate (L1B) at (0.2,0.4);
    \coordinate (L2B) at (0,0);
    \coordinate (R1B) at (2.2,0.4);
    \coordinate (R2B) at (2,0);
    \draw (L1B) .. controls +( 0, 0) and +(0,0) .. (R1B);
    \draw (L2B) .. controls +( 0, 0) and +(0,0) .. (R2B); 
 \end{scope}  
  \draw (R1) -- (R1B);
  \draw (R2) -- (R2B);
  \draw (L1) -- (L1B);
  \draw (L2) -- (L2B);
  \draw[double] (ML) .. controls +(0, -0.6) and +(0, -0.6) .. (MR);
\end{scope}

}}}; 
      }} 
      \right)
  \ .
    \end{equation}
  \end{subequations}
Unlike the convention in \cite{QRSW2}, we place the terms $\NB{\tikz[font= \tiny,
  scale=0.4]{}}$  in cohomological degrees $t=- 1$ and $t=1$ respectively. 

\begin{defn}
Let $L$ be a link with a diagram $D$. 
\begin{enumerate}
    \item Set $\CKh^{\dif_c}(D) $ to be the Khovanov chain complex of $D$ with coefficients in a field $\Bbbk$ of characteristic $p$, equipped with the action of the graded Hopf algebra $H=\Bbbk[\dif_c]/(\dif_c^p)$. Here, as the relative homotopy category of $\Bbbk$-vector spaces is equivalent to the derived category of graded $H$-modules, 
$\CKh^{\dif_c}(D)$ is regarded as an object in $\mc{D}(H,d)$.
\item  Via Theorem \ref{thm-Rickard}, the image of $\CKh^{\dif_c}(D)$ in $\mc{D}(H,d)/\mc{P}_H=H\udmod$ will be referred to as the \emph{slash Khovanov homology of $D$}, and will be denoted by $\Kh_/^{\dif_c}(D)$.
\end{enumerate}
\end{defn}

To simplify notation, we will draw (pieces of) link diagrams (with green dots) 
to represent the $\dif_c$-equivariant Khovanov chain complex of these diagrams. In other words, we will mostly drop
$\CKh^{\dif_c}(\cdot)$ and $\Kh_/^{\dif_c}(\cdot)$ around diagrams to prevent
overloaded figures.

\begin{thm} \cite[Theorem 6.8]{QRSW3} \label{thm:uncolorKh} \begin{enumerate}
    \item[(i)] If $D_1$ and $D_2$ are two planar projections of a framed link $L$, then their associated chain complexes of graded $H$-modules are isomorphic in the derived category $\mc{D}(H,d)$. Denote the object by $\CKh^{\dif_c}(L)$.
    \item[(ii)] The image of $\CKh^{\dif_c}(L)$ in the stable category of $H$, which will be referred to as the slash Khovanov homology of $L$ and denoted $\Kh_/^{\dif_c}(L)$,  is a framed link invariant valued in $H\udmod$.  
    \item[(iii)] For a link $L$, $\chi_{\mathbb{O}}(\CKh^{\dif_c}(L))=\chi_{\mathbb{O}}(\Kh^{\dif_c}_/(L))$ is equal to the Jones polynomial of $L$ evaluated at a $2p$th root of unity.
    \end{enumerate}
\end{thm}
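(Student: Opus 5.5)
The plan is to reduce the statement to the equivariant results of \cite{QRSW3} and then pass to the relative homotopy category and the stable category. First I will check that $\CKh^{\dif_c}(D)$ is a genuine complex of graded $H$-modules. On each resolution web, the state space (computed by $\mathfrak{gl}(2)$-foam evaluation with equivariant parameters set to zero) carries the derivation $\dif_c$ of degree $2$, twisted by the green dots placed on the resolutions. In characteristic $p$ this derivation satisfies $\dif_c^p=0$: on polynomial generators $x$ we have $\dif_c(x)=\lambda x^2$, so $\dif_c^p(x)=p!\,\lambda^{p-1}x^{p+1}\cdot(\text{unit})=0$, and $\dif_c^p$ is again a derivation. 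The green dot multiplicities $-c,-\bar c$ and $\bar c,c$ are chosen exactly so that the zip and unzip differentials in $T$ and $T^{-1}$ commute with $\dif_c$, as computed in \cite[Section 6.1]{QRSW3}. Together these facts show that the cube of resolutions is a complex in $H\dmod$.

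For part (i) I will redo the Reidemeister invariance proofs of \cite[Theorem 6.8]{QRSW3} in the relative setting. The standard Bar-Natan/Khovanov isomorphisms decompose each complex via Gaussian elimination into a contractible piece and the simpler diagram. By Lemma \ref{lem-construction-of-triangle}, it suffices that the inclusion and projection maps be $H$-equivariant, even though the null-homotopies need not be. The homotopies then lie in $\mathrm{Ker}(\mc{F})$, which yields isomorphisms in $\mc{C}^\dif(\Bbbk,d)=\mc{D}(H,d)$. Green dots that end up on the same edge add their multiplicities, and dots on a closed component must be moved around the diagram.

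I expect this dot bookkeeping to be the main obstacle. For Reidemeister I, the leftover dot weight ($-c-\bar c=-1$ or similar) does not cancel. This is exactly why the statement concerns framed links, and it contributes a grading or framing shift. For Reidemeister II and III, the dots cancel after sliding them along edges, using that a dot of weight $\lambda$ on an edge is isomorphic to the same dot moved across a trivalent vertex compatibly. I will first check Reidemeister II directly and then deduce Reidemeister III from braid-like invariance of the Rickard-type crossing complexes, as in \cite{QRSW3}.

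Part (ii) then follows formally by applying the functor $\mc{D}(H,d)\to\mc{D}(H,d)/\mc{P}_H\simeq H\udmod$ of Theorem \ref{thm-Rickard}. For part (iii), $\chi_{\mathbb{O}}$ is an alternating sum of graded dimensions reduced modulo $1+q^2+\dots+q^{2(p-1)}$. This is the usual graded Euler characteristic of the Khovanov complex, which is the Jones polynomial, reduced into $\mathbb{O}_p$, i.e.\ evaluated at a primitive $2p$th root of unity.
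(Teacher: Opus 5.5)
Your proposal follows the paper's route: the paper gives no independent argument and simply imports the statement from \cite[Theorem 6.8]{QRSW3}, with (ii) obtained by passing to $\mc{D}^b(H,d)/\mc{P}_H\simeq H\udmod$ via Theorem~\ref{thm-Rickard}, and (iii) holding because $\chi_{\mathbb{O}}$ is just the graded Euler characteristic reduced into $\mathbb{O}_p$. One correction to your sketch of (i): a Reidemeister I move changes the $H$-module structure, not merely the gradings (compare $\CKh^{\dif_c}(\ovoid)\cong q^{-1}U_0\oplus qU_0$ with $\CKh^{\dif_c}(\ovoid^{+1})\cong q^{-3}U_1$), so besides R2 and R3 what you actually need is invariance under the framed R1 move, i.e.\ that the effect of a kink depends only on its sign and not on its rotation, which is part of what \cite{QRSW3} establishes.
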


\subsection{Examples}

\begin{example}\label{eg-unknot}
As an $H$-module, the chain complex of the unknot $\ovoid$ is equal to:
\begin{equation}
\CKh^{\dif_c}(\ovoid)  \cong 
q^{-1} U_0 \oplus q U_0.
\end{equation}
This is computed as in \cite[Section 7.1]{QRSW3} where one makes the necessary modifications in characteristic $p$, while also killing the equivariant parameters. More explicitly, in the language of foams (as in \cite{QRSW3}), the invariant is linearly spanned by a cup with $i$ dots for $0 \leq i \leq 1$:
\[
\CKh^{\dif_c}(\ovoid)=\Bbbk\left\langle v_i=\NB{\tikz[]{\begin{scope}
  \draw (0,0) arc (180 :0: 0.5cm and 0.2cm);
  \draw[very thin] (0,0) arc (180 :0: 0.5cm and -0.6cm) node [pos=0.5,
  above] {$\bullet~i$};
  \draw (0,0) arc (180 :0: 0.5cm and -0.2cm);
\end{scope}

}}~\Big|~ i=0,1 \right\rangle.
\]
\end{example}

\begin{example} \label{eg-hopf-link}
As in \cite[Equation 7.5]{QRSW1} or \cite[Proposition 5.1]{Roz}, for the Hopf link $T_{2,2}$, the complex in the relative homotopy category is given by 
\begin{equation} \label{eq:hopfcomplex}
  \NB{\tikz[xscale = 3, yscale = 3]{
       \node (i0) at (0, 0) { \NB{\tikz[font= \tiny,
  scale=0.6]{\begin{scope}
  \coordinate (bl) at (-0.5, -1);
  \coordinate (br) at ( 0.5, -1);
  \coordinate (tl) at (-0.5,  1);
  \coordinate (tr) at ( 0.5,  1);
    \coordinate (ml) at (-0.5,  -.8);
        \coordinate (Ml) at (-0.5,  .8);
 \coordinate (mr) at (0.5,  -.6);
\coordinate (Mr) at (0.5,  .6);

 
   \draw[->] (bl) -- (tl) node[pos = 0, below] {} node[pos = 1,
  above] {} coordinate[pos = 0.4] (ga);
  
  
    \draw[->] (br) -- (tr) node[pos = 0, below] {} node[pos = 1, above] {} coordinate[pos = 0.4] (gb);
  
  
    \draw (.5, -1) arc (-180:0:0.5) -- (1.5,1) arc (-180:0:-0.5) coordinate[pos = 0.25] (X) coordinate[pos = 0.75] (Y) coordinate[pos = 0.5] (gb);
    
       \draw (-.5, -1) arc (180:0:-0.5) -- (-1.5,1) arc (180:0:0.5) coordinate[pos = 0.25] (XX) coordinate[pos = 0.75] (YY) coordinate[pos = 0.5] (ga);

          \filldraw[draw= green!50!black, fill = white] (ga) circle (1mm)
  node[above, green!50!black] {$-1$};

     \filldraw[draw= green!50!black, fill = white] (gb) circle (1mm)
  node[above, green!50!black] {$-1$};

\end{scope}}} };
\node (i2) at (-1, 0) { \NB{\tikz[font= \tiny,
  scale=0.6]{\begin{scope}
  \coordinate (bl) at (-0.5, -1);
  \coordinate (br) at ( 0.5, -1);
  \coordinate (bm) at (  0,-0.3);
  \coordinate (tl) at (-0.5,  1);
  \coordinate (tr) at ( 0.5,  1);
  \coordinate (tm) at (  0, 0.3);
  \draw[>-]  (bl) .. controls +( 0, 0.5) and +(0,0) .. (bm)
  node[right, pos = 0] {};
  \draw[>-]  (br) .. controls +( 0, 0.5) and +(0,0) .. (bm)
  node[left, pos = 0] {};
  \draw[<-]  (tl) .. controls +( 0, -0.5) and +(0,0) .. (tm)
  node[right, pos = 0] {} coordinate[pos = 0.25] (ga) ;
    \filldraw[draw= green!50!black, fill = white] (ga) circle (1mm)
  node[left, green!50!black] {$c-1$};

  \draw[<-]  (tr) .. controls +( 0, -0.5) and +(0,0) .. (tm)
  node[left, pos = 0] {} coordinate[pos = 0.25] (gb) ;
    \filldraw[draw= green!50!black, fill = white] (gb) circle (1mm)
  node[right, green!50!black] {$-c$};
  \draw [double] (bm) -- (tm) node[left, pos = 0.5] {};
  \draw (.5, 1) arc (180:0:0.5) -- (1.5,-1) arc (180:0:-0.5);
  \draw (-.5, 1) arc (-180:0:-0.5) -- (-1.5,-1) arc (-180:0:0.5);
\end{scope}}} };
      \node (i3) at (-2, 0) { \NB{\tikz[font= \tiny,
  scale=0.6]{\begin{scope}
  \coordinate (bl) at (-0.5, -1);
  \coordinate (br) at ( 0.5, -1);
  \coordinate (bm) at (  0,-0.3);
  \coordinate (tl) at (-0.5,  1);
  \coordinate (tr) at ( 0.5,  1);
  \coordinate (tm) at (  0, 0.3);
  \draw[>-]  (bl) .. controls +( 0, 0.5) and +(0,0) .. (bm)
  node[right, pos = 0] {};
  \draw[>-]  (br) .. controls +( 0, 0.5) and +(0,0) .. (bm)
  node[left, pos = 0] {};
  \draw[<-]  (tl) .. controls +( 0, -0.5) and +(0,0) .. (tm)
  node[right, pos = 0] {} coordinate[pos = 0.25] (ga) ;
    \filldraw[draw= green!50!black, fill = white] (ga) circle (1mm)
  node[left, green!50!black] {$c$};

  \draw[<-]  (tr) .. controls +( 0, -0.5) and +(0,0) .. (tm)
  node[left, pos = 0] {} coordinate[pos = 0.25] (gb) ;
    \filldraw[draw= green!50!black, fill = white] (gb) circle (1mm)
  node[right, green!50!black] {$1-c$};
  \draw [double] (bm) -- (tm) node[left, pos = 0.5] {};
  \draw (.5, 1) arc (180:0:0.5) -- (1.5,-1) arc (180:0:-0.5);
  \draw (-.5, 1) arc (-180:0:-0.5) -- (-1.5,-1) arc (-180:0:0.5);
\end{scope}}} };
\draw[->] (i3) -- (i2) node[pos=0.5, above] {$0$};
\draw[->] (i2) -- (i0) node[pos=0.5, above] {$\mapH$};
  }} 
\end{equation}
where $\eta$ is given by the closure of unzip with two dots on each facet with different signs:
\[
\label{eq:etafef} \eta\left(
\NB{\tikz[font= \tiny, scale=0.6]{}}\right) 
  = 
\NB{\tikz[font= \tiny, scale=0.6]{\begin{scope}
  \coordinate (bl) at (-0.5, -1);
  \coordinate (br) at ( 0.5, -1);
        \coordinate (ml) at (-1.5, 0);
  \coordinate (tl) at (-0.5,  1);
  \coordinate (tr) at ( 0.5,  1);
    \coordinate (ml) at (-1.5,  0);
        \coordinate (Ml) at (-0.5,  .8);
 \coordinate (mr) at (0.5,  -.6);
\coordinate (Mr) at (0.5,  .6);

 
   \draw[->] (bl) -- (tl) node[pos = 0, below] {} node[pos = 1,
  above] {} coordinate[pos = 0.4] (ga);
  
  
    \draw[->] (br) -- (tr) node[pos = 0, below] {} node[pos = 1, above] {} coordinate[pos = 0.4] (gb);
  
  
    \draw (.5, -1) arc (-180:0:0.5) -- (1.5,1) arc (-180:0:-0.5) coordinate[pos = 0.25] (X) coordinate[pos = 0.75] (Y) coordinate[pos = 0.5] (gb);
    
       \draw (-.5, -1) arc (180:0:-0.5) -- (-1.5,1) arc (180:0:0.5) coordinate[pos = 0.25] (XX) coordinate[pos = 0.75] (YY) coordinate[pos = 0.5] (ga);

          \filldraw[draw= green!50!black, fill = white] (ga) circle (1mm)
  node[above, green!50!black] {$-1$};

     \filldraw[draw= green!50!black, fill = white] (gb) circle (1mm)
  node[above, green!50!black] {$-1$};
     \filldraw[draw= black!50!black, fill = black] (ml) circle (1mm)
  node[left, black!50!black] {};

\end{scope}}} 
  -
\NB{\tikz[font= \tiny, scale=0.6]{\begin{scope}
  \coordinate (bl) at (-0.5, -1);
  \coordinate (br) at ( 0.5, -1);
        \coordinate (ml) at (1.5, 0);
  \coordinate (tl) at (-0.5,  1);
  \coordinate (tr) at ( 0.5,  1);
        \coordinate (Ml) at (-0.5,  .8);
 \coordinate (mr) at (.5,  -.6);
\coordinate (Mr) at (0.5,  .6);

 
   \draw[->] (bl) -- (tl) node[pos = 0, below] {} node[pos = 1,
  above] {} coordinate[pos = 0.4] (ga);
  
  
    \draw[->] (br) -- (tr) node[pos = 0, below] {} node[pos = 1, above] {} coordinate[pos = 0.4] (gb);
  
  
    \draw (.5, -1) arc (-180:0:0.5) -- (1.5,1) arc (-180:0:-0.5) coordinate[pos = 0.25] (X) coordinate[pos = 0.75] (Y) coordinate[pos = 0.5] (gb);
    
       \draw (-.5, -1) arc (180:0:-0.5) -- (-1.5,1) arc (180:0:0.5) coordinate[pos = 0.25] (XX) coordinate[pos = 0.75] (YY) coordinate[pos = 0.5] (ga);

          \filldraw[draw= green!50!black, fill = white] (ga) circle (1mm)
  node[above, green!50!black] {$-1$};

     \filldraw[draw= green!50!black, fill = white] (gb) circle (1mm)
  node[above, green!50!black] {$-1$};
     \filldraw[draw= black!50!black, fill = black] (ml) circle (1mm)
  node[left, black!50!black] {};

\end{scope}}} .
\]

The leftmost and the middle terms are both isotopic to the $\Theta$-web, while the rightmost term is a disjoint union of two circles. The middle and rightmost terms in \eqref{eq:hopfcomplex} are spanned by:
\begin{equation}
v_1= \NB{\tikz[font= \tiny, scale=0.6]{}}, \quad
v_2=  \NB{\tikz[font= \tiny,scale=0.6]{\begin{scope}
  \coordinate (bl) at (-0.5, -1);
      \coordinate (ml) at (-1.5, 0);
  \coordinate (br) at ( 0.5, -1);
  \coordinate (bm) at (  0,-0.3);
  \coordinate (tl) at (-0.5,  1);
  \coordinate (tr) at ( 0.5,  1);
  \coordinate (tm) at (  0, 0.3);
  \draw[>-]  (bl) .. controls +( 0, 0.5) and +(0,0) .. (bm)
  node[right, pos = 0] {};
  \draw[>-]  (br) .. controls +( 0, 0.5) and +(0,0) .. (bm)
  node[left, pos = 0] {};
  \draw[<-]  (tl) .. controls +( 0, -0.5) and +(0,0) .. (tm)
  node[right, pos = 0] {} coordinate[pos = 0.25] (ga) ;
    \filldraw[draw= green!50!black, fill = white] (ga) circle (1mm)
  node[left, green!50!black] {$c-1$};
     \filldraw[draw= black!50!black, fill = black] (ml) circle (1mm)
  node[left, black!50!black] {};
  
  \draw[<-]  (tr) .. controls +( 0, -0.5) and +(0,0) .. (tm)
  node[left, pos = 0] {} coordinate[pos = 0.25] (gb) ;
    \filldraw[draw= green!50!black, fill = white] (gb) circle (1mm)
  node[right, green!50!black] {$-c$};
  \draw [double] (bm) -- (tm) node[left, pos = 0.5] {};
  \draw (.5, 1) arc (180:0:0.5) -- (1.5,-1) arc (180:0:-0.5);
  \draw (-.5, 1) arc (-180:0:-0.5) -- (-1.5,-1) arc (-180:0:0.5);
\end{scope}}}, 
\end{equation}
\begin{equation}
v_3=  \NB{\tikz[font= \tiny, scale=0.6]{}}, \quad
v_4= \NB{\tikz[font= \tiny, scale=0.6]{}}, \quad
v_5=  \NB{\tikz[font= \tiny,scale=0.6]{}}, \quad
v_6=  \NB{\tikz[font= \tiny, scale=0.6]{\begin{scope}
  \coordinate (bl) at (-0.5, -1);
  \coordinate (br) at ( 0.5, -1);
        \coordinate (ml) at (-1.5, 0);
  \coordinate (tl) at (-0.5,  1);
  \coordinate (tr) at ( 0.5,  1);
    \coordinate (ml) at (-1.5,  0);
        \coordinate (Ml) at (-0.5,  .8);
 \coordinate (mr) at (1.5,  0);
\coordinate (Mr) at (0.5,  .6);

 
   \draw[->] (bl) -- (tl) node[pos = 0, below] {} node[pos = 1,
  above] {} coordinate[pos = 0.4] (ga);
  
  
    \draw[->] (br) -- (tr) node[pos = 0, below] {} node[pos = 1, above] {} coordinate[pos = 0.4] (gb);
  
  
    \draw (.5, -1) arc (-180:0:0.5) -- (1.5,1) arc (-180:0:-0.5) coordinate[pos = 0.25] (X) coordinate[pos = 0.75] (Y) coordinate[pos = 0.5] (gb);
    
       \draw (-.5, -1) arc (180:0:-0.5) -- (-1.5,1) arc (180:0:0.5) coordinate[pos = 0.25] (XX) coordinate[pos = 0.75] (YY) coordinate[pos = 0.5] (ga);

          \filldraw[draw= green!50!black, fill = white] (ga) circle (1mm)
  node[above, green!50!black] {$-1$};

     \filldraw[draw= green!50!black, fill = white] (gb) circle (1mm)
  node[above, green!50!black] {$-1$};
     \filldraw[draw= black!50!black, fill = black] (ml) circle (1mm)
  node[left, black!50!black] {};
       \filldraw[draw= black!50!black, fill = black] (mr) circle (1mm)
  node[left, black!50!black] {};

\end{scope}}} .
\end{equation}
It is straightforward to calculate:
\[
\eta(v_1)=v_4-v_5, \quad \eta(v_2)=-v_6, \]
\[
\dif_c(v_3)=- v_4-v_5, \quad
\dif_c(v_4)=-v_6, \quad
\dif_c(v_5)=- v_6
\]
and all other mappings on these basis vectors are zero. Set
\[
w_3:=v_3, \quad w_4:=-v_4-v_5, \quad w_5:=v_4-v_5, \quad w_6:=-v_6.
\]
Then we have
\[
\eta(v_1)=w_5, \quad \eta(v_2)=w_6, \]
\[
\dif_c(w_3)=w_4, \quad
\dif_c(w_4)=-2w_6, 
\]
and all other mappings on these basis vectors are zero.
Thus, inside the summand
\begin{equation} \label{eq:hopfcomplex-2}
  \NB{\tikz[xscale = 3, yscale = 3]{
       \node (i0) at (0, 0) { \NB{\tikz[font= \tiny,
  scale=0.6]{}} };
\node (i2) at (-1, 0) { \NB{\tikz[font= \tiny,
  scale=0.6]{}} };
\draw[->] (i2) -- (i0) node[pos=0.5, above] {$\mapH$};
  }} 
\end{equation}
the $H$-module $\Bbbk\langle v_1,v_2,w_5,w_6\rangle$ constitutes a contractible subcomplex. The quotient of \eqref{eq:hopfcomplex-2} by this subcomplex is isomorphic to $U_1$ sitting in homological degree zero spanned by $w_3$ and $w_4$.

The leftmost $H$-summand in \eqref{eq:hopfcomplex} has $p$-DG structure
\begin{equation}
  \NB{\tikz[xscale = 3, yscale = 3]{
    \node (i2) at (-1, 0) {\NB{\tikz[font= \tiny,
  scale=0.6]{\begin{scope}
  \coordinate (bl) at (-0.5, -1);
      \coordinate (ml) at (-1.5, 0);
  \coordinate (br) at ( 0.5, -1);
  \coordinate (bm) at (  0,-0.3);
  \coordinate (tl) at (-0.5,  1);
  \coordinate (tr) at ( 0.5,  1);
  \coordinate (tm) at (  0, 0.3);
  \draw[>-]  (bl) .. controls +( 0, 0.5) and +(0,0) .. (bm)
  node[right, pos = 0] {};
  \draw[>-]  (br) .. controls +( 0, 0.5) and +(0,0) .. (bm)
  node[left, pos = 0] {};
  \draw[<-]  (tl) .. controls +( 0, -0.5) and +(0,0) .. (tm)
  node[right, pos = 0] {} coordinate[pos = 0.25] (ga) ;
    \filldraw[draw= green!50!black, fill = white] (ga) circle (1mm)
  node[left, green!50!black] {$c$};
     \filldraw[draw= black!50!black, fill = black] (ml) circle (1mm)
  node[left, black!50!black] {};
  
  \draw[<-]  (tr) .. controls +( 0, -0.5) and +(0,0) .. (tm)
  node[left, pos = 0] {} coordinate[pos = 0.25] (gb) ;
    \filldraw[draw= green!50!black, fill = white] (gb) circle (1mm)
  node[right, green!50!black] {$1-c$};
  \draw [double] (bm) -- (tm) node[left, pos = 0.5] {};
  \draw (.5, 1) arc (180:0:0.5) -- (1.5,-1) arc (180:0:-0.5);
  \draw (-.5, 1) arc (-180:0:-0.5) -- (-1.5,-1) arc (-180:0:0.5);
\end{scope}}} };
      \node (i3) at (-2, 0) { \NB{\tikz[font= \tiny,
  scale=0.6]{}} };
    \draw[->] (i3) -- (i2) node[pos=0.5, above] {$0$};
  }} 
\end{equation}
It follows that the complex $\CKh^{\dif_c}(T_{2,2})$ is isomorphic, in the relative homotopy category, to a direct sum: 
\[
\CKh^{\dif_c}(T_{2,2}) \cong
t^{-2}( U_0 \oplus q^2 U_0 ) \oplus q^{-4}  U_1  .
\]
This computation is done as in \cite{Roz}, setting the equivariant parameters to zero.
See also \cite[Example 3.8]{QRSW4} for more details.
\end{example}

\begin{example} \label{ex:torus}
The previous example could easily be extended to a computation of the invariant of the $(2,n)$-torus link $T_{2,n}$ using \cite{Roz}.

For odd $n$,
\[
\CKh^{\dif_c}(T_{2,n}) \cong
\begin{cases}
q^{-n-2} U_1 \oplus \bigoplus_{j=1}^{\frac{n-1}{2}} 
\left( 
t^{-2j} q^{-n+4j-2}U_0 \oplus t^{-2j-1} q^{-n+4j+2}U_0
\right) \text{ if } p \nmid n \\
q^{-n-2} (U_0 \oplus q^2 U_0) \oplus \bigoplus_{j=1}^{\frac{n-1}{2}} ,
\left( 
t^{-2j} q^{-n+4j-2}U_0 \oplus t^{-2j-1} q^{-n+4j+2}U_0
\right) \text{ if } p | n .
\end{cases}
\]

For even $n$,
\[
\CKh^{\dif_c}(T_{2,n}) \cong
\begin{cases}
q^{-n-2} U_1 \oplus \bigoplus_{j=1}^{\frac{n-2}{2}} 
\left( 
t^{-2j} q^{-n+4j-2}U_0 \oplus t^{-2j-1} q^{-n+4j+2}U_0
\right) \oplus t^{-n} q^{n-2} (U_0 \oplus q^2 U_0) \text{ if } p \nmid n , \\
q^{-n-2} (U_0 \oplus q^2 U_0) \oplus \bigoplus_{j=1}^{\frac{n-2}{2}} 
\left( 
t^{-2j} q^{-n+4j-2}U_0 \oplus t^{-2j-1} q^{-n+4j+2}U_0
\right) \oplus t^{-n} q^{n-2} (U_0 \oplus q^2 U_0) \text{ if } p | n .
\end{cases}
\]

\end{example}

\begin{example}
Let $\ovoid^{\pm 1}$ be the $\pm 1$ framed unknots.  
\[
\CKh^{\dif_c}(\ovoid^{+1}) \cong q^{-3}U_1, \quad \quad
\CKh^{\dif_c}(\ovoid^{-1}) \cong q^{}U_1 .
\]
Thus 
\[
 \CKh^{\dif_c}(\bigsqcup_n \ovoid^{+1}) \cong q^{-3n}U_1^{\otimes n}, \quad \quad
\CKh^{\dif_c}(\bigsqcup_n \ovoid^{-1}) \cong q^{n}U_1^{\otimes n} .
\]
Then by \cite[Equation 8]{KQ}, any $p$-complex can arise as the homology of a link.
\end{example}

\begin{example} \label{ex:+1framed2unlink}
When $L$ is the  $2$-cable of the $+1$-framed unknot, it becomes the Hopf link in which each component has framing number 1. The Khovanov chain complex of $L$ in the relative homotopy category is given by
\[
\CKh^{\dif_c}(L) \cong 
q^{-4}(U_0 \oplus q^2 U_0) \oplus t^2 q^{-8} U_1.
\]
This follows from the Hopf link calculation in Example  \ref{eg-hopf-link}, while taking into account that the framings on each component introduce grading and homological degree shifts, which actually do not change the $H$-module structure.
\end{example}

\begin{rem}

The Jones polynomial at a sixth root of unity is identically 1.
However, at $p=3$ the homology is a non-trivial invariant.

\end{rem}

\subsection{Functoriality}
Ehrig, Tubbenhauer and Wedrich \cite{ETW} have proved that Khovanov--Rozansky $\mathfrak{gl}_N$ homology enjoys full functoriality with respect to link cobordisms. This extends earlier work on functoriality for the $\mathfrak{sl}(2)$ case in \cite{BHPW, blan1, Cap, CMW, EST1} and \cite{Vog}.  Functoriality for the $\mathfrak{sl}(2)$ case was first established by Jacobsson \cite{Jac} and Khovanov \cite{Khcob} who proved it up to a sign.

\begin{thm} 
  The Khovanov chain complex endowed with the $p$-differential $\dif_c$ provides a functor $\CKh^{\dif_c}(\cdot)$ from the category of framed link cobordisms to the derived category of $H$-modules. 
  
  Consequently, the slash Khovanov homology $\Kh_/^{\dif_c}(\cdot)~$ is a functor from the category of framed link cobordisms to the stable category $H\udmod$.
\end{thm}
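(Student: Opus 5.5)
The plan is to deduce the statement from the functoriality of $\mathfrak{gl}(2)$ Khovanov--Rozansky homology due to Ehrig--Tubbenhauer--Wedrich \cite{ETW}. The only new input is that every map they assign to a cobordism is $H$-equivariant for the twisted $\dif_c$-action.

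First, present any framed link cobordism as a composition of elementary movie moves: Reidemeister moves (framed versions), births and deaths of circles, and saddles. To each elementary cobordism, \cite{ETW} assigns a chain map built from foams. For births, deaths and saddles these are cup, cap and zip/unzip foams. For Reidemeister moves they are the homotopy equivalences already appearing in the proof of Theorem~\ref{thm:uncolorKh}.

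Second, check that each such chain map commutes with $\dif_c$ once the green-dot twists are accounted for. For the Reidemeister maps, this is exactly the content of Theorem~\ref{thm:uncolorKh}(i), proved in \cite[Theorem 6.8]{QRSW3}, where they are realized as isomorphisms in the relative homotopy category. For cups and caps, \eqref{eq:e-act-cup} gives $\dif_c=0$ on these foams. Since no green dots sit on a newly born or dying circle, these maps are equivariant.

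For saddles, the foam is a zip or unzip, and \eqref{eq:e-act-zip}, \eqref{eq:e-act-unzip} show that $\dif_c$ of it is a dotted correction. This correction must be cancelled by the twists on the incoming and outgoing webs. I would show this by moving green dots along edges, which is possible because the twisting only depends on the total weight per facet up to the relations in \cite[Section 3.3]{QRSW3}. One then arranges the weights on the two sides of the saddle so that the correction terms cancel exactly. This is the same mechanism that makes the crossing differentials in $T$ and $T^{-1}$ equivariant.

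Third, observe that an $H$-equivariant chain map between complexes of $H$-modules defines a morphism in $\mc{D}(H,d)$. The relations among movie moves hold up to homotopy by \cite{ETW}, and these homotopies are not required to be $H$-equivariant. This is harmless: $\mc{C}^\dif(\Bbbk,d)=\mc{D}(H,d)$, so two equivariant maps that are homotopic after forgetting $H$ agree in $\mc{D}(H,d)$. More precisely, their difference factors through a cone lying in $\mathrm{Ker}(\mc{F})$, which is acyclic. Hence ETW's movie-move relations, including the absence of sign ambiguities, descend to $\mc{D}(H,d)$, giving the functor $\CKh^{\dif_c}$. Composing with Rickard's functor (Theorem~\ref{thm-Rickard}) yields the stable-category version $\Kh_/^{\dif_c}$.

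The main obstacle I expect is the saddle step. One must show that the green-dot weights can be consistently redistributed so that each zip/unzip becomes equivariant. Here the framing matters, since the twists at crossings involve $c$ and $\bar c$ asymmetrically. My first attempt would be to reduce to a local check on a single saddle between webs with specified dot weights, compute $\dif_c$ via the Leibniz rule, and use the neck-cutting and dot-migration relations of \cite[Section 6.1]{QRSW3} to cancel the remaining terms.
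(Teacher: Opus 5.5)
Your overall architecture, \cite{ETW} functoriality plus $H$-equivariance of the elementary cobordism maps, is the paper's. However, the step you single out as the main obstacle rests on a misidentification. The saddle of a link cobordism is an ordinary $1$-labeled saddle between two anti-parallel arcs, not a zip or unzip. Zip and unzip carry a $2$-labeled facet and enter only through the crossing complexes $T$, $T^{-1}$ (and hence the Reidemeister maps); that is where the green-dot twists are needed. The third foam in \eqref{eq:e-act-cup} is precisely the saddle, and $\dif_c$ is defined to vanish on it, just as on cups and caps. This vanishing is the whole content of the paper's proof. On each resolution, the saddle map is this foam glued to the identity foam outside a small disk. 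The green dots at the two ends therefore sit in matching positions on the same facets, and their twist contributions cancel. No redistribution of green dots, neck-cutting or dot migration is needed; the cancellation scheme you sketch, and leave unfinished, is aimed at the wrong foam.

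Your third step is also wrong as stated. The kernel of $\mc{F}$ detects morphisms that become \emph{invertible} after forgetting $H$, not morphisms that become zero. Indeed, $\mathrm{Cone}(f-g)$ lies in $\mathrm{Ker}(\mc{F})$ iff $\mc{F}(f-g)$ is a homotopy equivalence. If instead $\mc{F}(f-g)\simeq 0$, then $\mc{F}(\mathrm{Cone}(f-g))\simeq \mc{F}(N)\oplus\mc{F}(M)[1]$, which is not acyclic in general.

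Here is a concrete counterexample to your claim. Let $M$ be the complex $q^2\Bbbk\to U_1$, the inclusion of the socle, in homological degrees $-1,0$. Let $N$ be $q^2\Bbbk$ in degree $-1$, and let $f\colon M\to N$ be the identity in degree $-1$. Then $f$ is $H$-equivariant and null-homotopic over $\Bbbk$, via the non-$H$-linear projection $U_1\to q^2\Bbbk$. But in $\mc{D}(H,d)$, where $M\cong\Bbbk$, $f$ is the connecting morphism of the non-split sequence $0\to q^2\Bbbk\to U_1\to\Bbbk\to 0$. It is therefore a nonzero element of $\mathrm{Ext}^1_H(\Bbbk,q^2\Bbbk)$.

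So non-equivariant homotopies do not by themselves yield the movie-move relations in $\mc{D}(H,d)$. You need the homotopies to be compatible with $\dif_c$ (for instance, built from foams on which the twisted $\dif_c$ vanishes), or some rigidity of the relevant Hom spaces. The paper's one-line proof folds this into its appeal to functoriality, but the justification you give for it does not work.
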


\begin{proof}
This follows from the functoriality of Khovanov homology and from the fact that $\dif_c$ annihilates cup, cap, and saddle foams.
\end{proof}

\section{Colored link homology at a root of unity}
In this section we extend various finite-dimensional categorifications of the colored Jones polynomial to the $p$-DG setting. Let us immediately note that the isomorphisms established by \cite{VM} between the various categorifications remain true as long as the color on each component is less than $p$.

\subsection{Colored Khovanov homology}

\subsubsection{Tensor resolutions}
We review Khovanov's categorification of the colored Jones polynomial which was extended by Beliakova--Wehrli to a theory over $\mathbb{Z}[\frac{1}{2}]$.  
Thus, we again assume $\Bbbk$ is a field of characteristic $p>2$.

Let ${\bf n}=(n_1,\ldots,n_r)$ be a sequence of natural numbers.  Let $L_{\bf n}$ be a framed oriented link with $r$ components with the $i$th component labeled by the $(n_i+1)$-dimensional irreducible representation of quantum $\mathfrak{sl}(2)$.

Let $L^{\bf n}$ be a cabling of $L$ where the $i$th component becomes cabled by $n_i$ parallel strands and the orientations alternate between the original one and its opposite.

For an array of $n$ dots, a $k$-pairing is a selection of $k$ disjoint adjacent pairs of dots.  
Similarly, define a ${\bf k}$-pairing of ${\bf n}$ dots.
Let $\Gamma_{\bf n}$ be a graph whose vertices are ${\bf k}$-pairings.  There is an edge from one pairing ${\bf k}$ to another ${\bf k}'$ if ${\bf k}'$ can be obtained from ${\bf k}$ by adding a single pairing.

For example, the graph $\Gamma_{(2,2)}$ is given in \eqref{eqn-Gamma22}.
\begin{equation}\label{eqn-Gamma22}
\begin{gathered}
\xymatrix@=0.75em{
& & & \bullet  \ar@{-}[r] & \bullet \\
& & & \bullet & \bullet \ar[drr]^{} \\
\bullet & \bullet \ar[urr]^{}  & & & & & \bullet   \ar@{-}[r]& \bullet    \\
\bullet & \bullet \ar[drr]^{} & & & & & \bullet  \ar@{-}[r] & \bullet \\
& & & \bullet  & \bullet   \ar[urr]^{} \\
& & & \bullet \ar@{-}[r] & \bullet \\
}
\end{gathered}
\end{equation}

To a ${\bf k}$-pairing and a colored link $L_{\bf n}$, associate the Khovanov chain complex $\CKh^{\dif_c}(L^{{\bf n}-2{\bf k}})$ equipped with a $p$-differential $\dif_c$. If there is an edge from ${\bf k}$ to ${\bf k}'$, by the functoriality of Khovanov chain complexes, there is a map from $\CKh^{\dif_c}(L^{{\bf n}-2{\bf k}})$ to
$\CKh^{\dif_c}(L^{{\bf n}-2{\bf k}'})$ given by gluing an annulus \eqref{eq:annulus1} between the strands of the cable which form a pair in ${\bf k}'$ but not in ${\bf k}$.
Note that the map associated to this annulus commutes with the $p$-differential.
\begin{equation} \label{eq:annulus1}
  \NB{\tikz[font= \tiny,
  scale=0.6]{\begin{scope}[scale=1.5]
  \draw  (2, 0) arc (0:-180:2cm and 0.5cm);
  \draw[densely dashed] (2,0) arc (0:180:2cm and 0.5cm);
  \draw[densely dashed] (0,0) circle (1cm and 0.25cm);
  \draw (1.5,1) arc (0:-180: 1.5cm and 0.375cm);
  \draw (1.5,1) arc (0:180: 1.5cm and 0.375cm);
  \begin{scope}[very thin]
  \draw (1.5, 1) .. controls +(0.3, 0) and +(0, 0.3) .. (2, 0)
  coordinate[pos=0.6] (a) node[pos=0.35, sloped, font=\small, below] {}; 
  \draw (-1.5, 1) .. controls +(-0.3, 0) and +(0, 0.3) .. (-2, 0)
 coordinate[pos=0.6] (b) node[pos=0.35, sloped, font=\small, below] {}; 
  \draw[densely dashed] (1.5, 1) .. controls +(-0.3, 0) and +(0, 0.3) .. (1, 0);
  \draw[densely dashed] (-1.5, 1) .. controls +(0.3, 0) and
  +(0, 0.3) .. (-1, 0); 
  \end{scope}
\end{scope}}} 
\end{equation}

\begin{rem}
 Instead of using alternating orientations on cabled strands, one may align the orientations, but use certain singular $\mathfrak{gl}(2)$ foams as cobordisms.  The following diagram represents a morphism from two nested unknots $\sqcup_2 \ovoid$ into 
 the unknot colored by the determinant representation of $\mathfrak{gl}(2)$,
given by the cylinder
\[
\NB{\tikz[font= \tiny,
  scale=0.6]{\begin{scope}[scale=1.5]
  \draw  (2, 0) arc (0:-180:2cm and 0.5cm);
  \draw[densely dashed] (2,0) arc (0:180:2cm and 0.5cm);
  \draw[densely dashed] (0,0) circle (1cm and 0.25cm);
  \draw[double] (1.5,1) arc (0:-180: 1.5cm and 0.275cm);
  \draw[densely dashed, double] (1.5,1) arc (0:180: 1.5cm and 0.275cm);
  \draw[double] (1.5,1.75) arc (0:-180: 1.5cm and 0.275cm);
    \draw[double] (1.5,1.75) arc (0:180: 1.5cm and 0.275cm);
  \begin{scope}[very thin]
  \draw (1.5, 1) .. controls +(0.3, 0) and +(0, 0.3) .. (2, 0)
  coordinate[pos=0.6] (a) node[pos=0.35, sloped, font=\small, below] {}; 
  \draw (-1.5, 1) .. controls +(-0.3, 0) and +(0, 0.3) .. (-2, 0)
 coordinate[pos=0.6] (b) node[pos=0.35, sloped, font=\small, below] {}; 
  \draw[densely dashed] (1.5, 1) .. controls +(-0.3, 0) and +(0, 0.3) .. (1, 0);
  \draw[densely dashed] (-1.5, 1) .. controls +(0.3, 0) and
  +(0, 0.3) .. (-1, 0); 
 \draw[double] (1.5, 1) -- +(0,0.75);
  \draw[double] (-1.5, 1) -- +(0,0.75);
  \end{scope}
\end{scope}}}
\]
This foam can be regarded as the product of the circle $\mathbb{S}^1$ with the web
\[
\NB{\tikz[font= \tiny,
  scale=0.6]{\begin{scope}
  \coordinate (bl) at (-0.5, -1);
  \coordinate (br) at ( 0.5, -1);
  \coordinate (bm) at (  0,-0.3);
  \coordinate (tl) at (-0.5,  1);
  \coordinate (tr) at ( 0.5,  1);
  \coordinate (tm) at (  0, 0.3);
  \draw[>-]  (bl) .. controls +( 0, 0.5) and +(0,0) .. (bm);
  \draw[>-]  (br) .. controls +( 0, 0.5) and +(0,0) .. (bm);

  \draw[double] [->] (bm) -- (tm); 
 
\end{scope}}}
\]
By equations \eqref{eq:e-act-dig-cap} and \eqref{eq:e-act-zip}, $\dif_c$ acts trivially on this foam. Similarly, equations \eqref{eq:e-act-dig-cup} and \eqref{eq:e-act-unzip} show that $\dif_c$ acts trivially on the foam turned upside-down.
\end{rem}

Now form a chain complex such that in homological degree $i$ the object is 
\[
\bigoplus_{\bf k} \CKh^{\dif_c}(L^{{\bf n}-2{\bf k}}), \text{where the sum is over pairings} \hspace{.05in} {\bf k} \hspace{.05in} \text{with} \hspace{.05in} i \hspace{.05in} \text{pairs}  .
\]
There is a map in the complex from $\CKh^{\dif_c}(L^{{\bf n}-2{\bf k}})$ to
$\CKh^{\dif_c}(L^{{\bf n}-2{\bf k}'})$ coming from the annulus if there is an edge from ${\bf k}$ to ${\bf k}'$ in the graph $\Gamma_{\bf n}$.  One also chooses a sign as prescribed by \cite{BW}.

In characteristic $p$, if one treats a dot in the dotted diagram as the $\mathfrak{sl}(2)$-representation $V_1 \cong \Bbbk^2$, then the resolution complex utilized by Khovanov becomes a resolution of the (dual) Weyl module. For instance, in characteristic 3, let us denote the natural projection map arising from the decomposition $V_1\otimes V_1\cong V_0\oplus V_2$: for $i,j\in \{1,-1\}$,
\[
\cap: V_1\otimes V_1\to V_0 , \quad v_{i}\otimes v_{j} \mapsto \delta_{i+j,0}v_0.\]
As in \cite{Khcolored}, the diagram 
    \begin{equation}\label{eqn-Gamma3}
\begin{gathered}
\xymatrix@C=0.25em{
& & & & \bullet \ar@{-}[r] & \bullet & \bullet \\
\bullet & \bullet & \bullet \ar[urr] \ar[drr] \\
& & & & \bullet & \bullet \ar@{-}[r] & \bullet \\
}
\end{gathered}
\end{equation}
translates into the complex
\[
V_1^{\otimes 3} \xrightarrow{(\cap_1,\cap_2)} V_1\oplus V_1,
\]
where $\cap_1$ and $\cap_2$ are given respectively by
\[
\cap_1: V_1^{\otimes 3}\cong (V_1\otimes V_1)\otimes V_1\xrightarrow{\cap\otimes \Id} V_0\otimes V_1\cong V_1,\quad \quad
\cap_2: V_1^{\otimes 3}\cong V_1\otimes( V_1\otimes V_1)\xrightarrow{\Id\otimes\iota }V_1\otimes V_0 \cong V_1   .
\]
As $V_1\otimes V_1\cong V_0\oplus V_2$, and $V_2$ is the Steinberg module, we obtain that
\[
\cap_1: V_1^{\otimes 3}\cong (V_1\otimes V_1)\otimes V_1 \cong (V_0\oplus V_2)\otimes V_1 \cong V_1 \oplus \projective{1} \to V_1
\]
\[
\cap_2: V_1^{\otimes 3}\cong V_1\otimes (V_1\otimes V_1) \cong V_1 \otimes (V_0\oplus V_2)  \cong  V_1 \oplus \projective{1}  \to V_1
\]
The direct sum of the maps $(\cap_1,\cap_2)$ is surjective, and its kernel is equal to the kernel of the natural projection from $P_1$ onto $V_1$. This is the Weyl module of highest weight 3: 
\begin{equation}\label{eqn-a-dual-Weyl-module}
\begin{gathered}
\xymatrix@C=0.75em{
\textrm{Weights:} && 3 && 1 && -1 && -3 \\
&& \bullet  \ar@/^/[rr]^{\fl} && \bullet \ar@/^/[rr]^{\fl}  && \bullet \ar@/^/[ll]^{\el}  && \bullet \ar@/^/[ll]^{\el}
}
\end{gathered}
\end{equation}
In this sense, we may regard Khovanov's colored homology theory in characteristic $p$ with colorings given by (dual) Weyl modules of $\mathfrak{sl}(2)$.

Denote the image of $\CKh^{\dif_c}(L_{\bf n})$ in $H\udmod$ by $\Kh^{\dif_c}_/(L_{\bf n})$.  The following theorem now follows immediately from \cite[Proposition 3]{Khcolored} and \cite[Theorem 1]{BW}.

\begin{thm}
Let ${\bf n}=(n_1,\ldots,n_r)$ with each $n_i \in \mathbb{Z}_{\geq 1}$.  
\begin{enumerate}
\item If $D_1$ and $D_2$ are two planar projections of a link $L$, then their associated colored complexes are isomorphic in the relative homotopy category. 
    \item The slash Khovanov homology $\Kh^{\dif_c}_/(L_{\bf n})$ is a framed link invariant valued in $H\udmod$.  
    \item For a link $L$, $\chi_{\mathbb{O}}(\Kh^{\dif_c}_/(L_{\bf n}))$ is the colored Jones polynomial evaluated at a $2p$th root of unity.
    \end{enumerate}
\end{thm}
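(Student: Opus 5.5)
The plan is to reduce all three parts to the uncolored statement, Theorem~\ref{thm:uncolorKh}, together with the structure of Khovanov's resolution complex. That complex is a finite total complex built from the cabled complexes $\CKh^{\dif_c}(L^{{\bf n}-2{\bf k}})$. Its connecting maps are annulus cobordisms, and these commute with $\dif_c$, since $\dif_c$ kills cups, caps and saddles.

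The first step is to check that the total complex is an honest bounded complex of $H$-modules, i.e.\ an object of $\mc{C}(\Bbbk\# H,d)$. This requires the Beliakova--Wehrli signs to square correctly; they are scalars, so they commute with $\dif_c$.

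For part (1), I would go through the proofs of \cite[Proposition 3]{Khcolored} and \cite[Theorem 1]{BW}. There, invariance of the colored complex under Reidemeister moves is shown in three stages:
\begin{itemize}
\item the cabled diagrams $L^{{\bf n}-2{\bf k}}$ are related by Reidemeister moves, yielding homotopy equivalences of the individual cable complexes;
\item these equivalences are natural with respect to the annulus maps, up to homotopies;
\item one assembles them into an equivalence of total complexes.
\end{itemize}
In our setting, each cable-level map is the image of a framed link cobordism under the functor of the Functoriality theorem. Each such map is therefore a morphism of $H$-modules, and it is an isomorphism in the relative homotopy category $\mc{C}^\dif(\Bbbk,d)=\mc{D}(H,d)$. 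The homotopies used in \cite{Khcolored,BW} need not be $H$-equivariant, but they only have to exist after forgetting $H$. That is exactly the class of morphisms inverted in the relative homotopy category (Definition~\ref{def-relative-homotopy-category}).

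To assemble the total equivalence, I would filter the total complex by the number of pairs. The associated graded pieces are termwise $\Bbbk$-split short exact sequences of $H$-equivariant complexes, which give distinguished triangles by Lemma~\ref{lem-construction-of-triangle}. A five-lemma/induction argument on the filtration then shows that the induced map of total complexes is an isomorphism in $\mc{D}(H,d)$, provided each graded piece is. An alternative route is to observe that the total map is an $H$-equivariant chain map which is a homotopy equivalence after forgetting $H$. Its cone is then in $\mathrm{Ker}(\mc{F})$, and the map is invertible in $\mc{C}^\dif$.

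Part (2) follows from (1) by applying Rickard's equivalence (Theorem~\ref{thm-Rickard}).

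For part (3), $\chi_{\mathbb{O}}$ is additive on the total complex. It therefore equals the alternating sum over ${\bf k}$ of $\chi_{\mathbb{O}}(\CKh^{\dif_c}(L^{{\bf n}-2{\bf k}}))$. By Theorem~\ref{thm:uncolorKh}(iii), each term is the Jones polynomial of a cable evaluated at a $2p$th root of unity. Khovanov's identity, which expresses the colored Jones polynomial as this alternating sum of cable Jones polynomials (the Chebyshev recursion $S_n$), holds over $\Z[q^{\pm1}]$. Specializing it to $\mathbb{O}_p$ gives the claim.

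The main obstacle I expect is in the first stage of part (1): the homotopies in \cite{Khcolored,BW} must only be needed in their non-equivariant form, while every chain map actually used is $H$-equivariant. Some of these chain maps are built from Gauss elimination or from inverses of isomorphisms, and such inverses may fail to commute with $\dif_c$. My first attempt would be to replace every such map by a cobordism map, which is equivariant by the Functoriality theorem, and to use the filtration/triangle argument to avoid inverting anything explicitly.
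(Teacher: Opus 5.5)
Your reduction is the paper's own. Its proof is nothing more than the citation of \cite[Proposition 3]{Khcolored} and \cite[Theorem 1]{BW}, relying on all the maps there being cobordism maps compatible with $\dif_c$. Parts (2) and (3) of your plan are fine.

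The gap is in how you justify (1). You assert that the homotopies in the cited proofs need not be $H$-equivariant. That holds only for homotopies certifying that an already given equivariant chain map is a homotopy equivalence. It does not hold for the homotopies expressing that the cable-level equivalences commute with the annulus maps. Those homotopies become components of the map of total complexes; equivalently, they are what make the squares commute in $\mc{C}^\dif(\Bbbk,d)=\mc{D}(H,d)$.

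An equivariant chain map that is null-homotopic only after forgetting $H$ need not vanish in $\mc{D}(H,d)$. For example, take $X=\mathrm{Cone}(q^2U_0\hookrightarrow U_1)$. The canonical projection $X\to q^2U_0[1]$ is null-homotopic over $\Bbbk$, since the inclusion splits linearly. Yet it represents the nonzero class of the non-split extension $0\to q^2U_0\to U_1\to U_0\to 0$. So non-equivariant commutation homotopies do not even give commuting squares in $\mc{C}^\dif$.

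Consequently your filtration/five-lemma argument has nothing to act on. It shows that an honest $H$-equivariant filtered chain map with invertible graded pieces is invertible, but it cannot build that map from squares that commute only up to non-equivariant homotopy. Beyond two filtration steps, even squares commuting in a triangulated category would not determine a map of iterated cones without coherent higher data. Nor can the Functoriality theorem supply the components, since it produces morphisms in $\mc{D}(H,d)$, not chain maps.

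What you must extract from \cite{Khcolored,BW} is this: the cable-level invariance chain maps commute with the annulus maps on the nose, or up to correcting homotopies that are themselves foam maps commuting with $\dif_c$. Then they assemble into an $H$-equivariant chain map of total complexes, and your ``alternative route'' finishes the job. The cone of that map is contractible after forgetting $H$, hence lies in $\mathrm{Ker}(\mc{F})$, so the map is an isomorphism in $\mc{C}^\dif(\Bbbk,d)$. This is exactly what the paper's ``follows immediately'' takes for granted.
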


\subsubsection{Symmetric invariants}
An equivalent way of constructing Khovanov's colored homology is to take the symmetric group invariant part of the cabled homology.
As we work in characteristic $p$,  we assume, in this version of colored homology, that the colors of the strands $n_i$ are less than the characteristic of the field ($n_i<p$).

Given a framed oriented link $L$ and a coloring of the strands ${\bf n}=(n_1,\ldots,n_r)$, let $L^{\bf n}$ be the associated cabled framed oriented link and $L_{\bf n}$ the associated colored link.

Khovanov \cite{Khcolored} conjectured that the braid group action on $\Kh(L^{\bf n})$ factors through the symmetric group $S_{\bf n}:=S_{n_1}\times \dots \times S_{n_r}$.  This was proved by Grigsby--Licata--Wehrli \cite{GLW}. The braid group action commutes with $\dif_c$, so that it descends to a $p$-differential, still denoted $\dif_c$, on the $S_{\bf n}$-invariant part of the total homology of the $\bf{n}$-cable of $L$.

\begin{defn}
Equip the total cohomology group $\mH(\CKh^{\dif_c}(L^{\bf n}))$ with the symmetric group $S_{\bf n}$ action.
Define $\SKh^{\dif_c}(L_{\bf n})$ to be the $S_{\bf n}$-invariants of $\mH(\CKh^{\dif_c}(L^{\bf n}))$ with the induced $H=\Bbbk[\dif_c]/(\dif_c^p)$ action arising from the $p$-differential $\dif_c$.
\end{defn}

One key drawback of this construction is that we do not know how the invariants under the symmetric group action can be taken on the Khovanov chain complexes of cabled links. 

\begin{thm}\label{thm:SKH}
Let ${\bf n}=(n_1,\ldots,n_r)$ with each $n_i<p$.  
\begin{enumerate}
\item If $D_1$ and $D_2$ are two planar projections of a link $L$, then their associated symmetrically colored homologies are isomorphic as bigraded $H$-modules.
    \item $\SKh^{\dif_c}(L_{\bf n})$ is a framed link invariant.  
    \item For a link $L$, $\SKh^{\dif_c}(L_{\bf n})$ is a chain complex of graded $H$-modules with the zero differential, and $\chi_{\mathbb{O}}(\SKh^{\dif_c}(L_{\bf n}))$ is the colored Jones polynomial evaluated at a $2p$th root of unity.
    \end{enumerate}
\end{thm}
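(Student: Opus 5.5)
Three inputs drive the argument: Theorem \ref{thm:uncolorKh} applied to the cable $L^{\bf n}$, the Grigsby--Licata--Wehrli theorem that the braid group action on $\mH(\CKh(L^{\bf n}))$ factors through $S_{\bf n}$, and the fact that every $n_i<p$, so $|S_{\bf n}|=\prod n_i!$ is invertible in $\Bbbk$.

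\emph{Step 1: $H$-equivariance.} The braid group acts on the cable by cobordisms: each generator is a braid cobordism between cable diagrams. By the functoriality theorem for $\CKh^{\dif_c}$ these maps are morphisms in $\mc{D}(H,d)$. Hence the induced maps on total homology commute with $\dif_c$, and the $S_{\bf n}$-action from \cite{GLW} is by $H$-module automorphisms. The green-dot twists must match: the green dots on the cable strands are determined by crossing types, and a braid cobordism between isotopic cables carries the same total twisting. This is the point I expect to need care.

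\emph{Step 2: invariants are a summand.} Because $|S_{\bf n}|$ is invertible, $e=\frac{1}{|S_{\bf n}|}\sum_{w\in S_{\bf n}}w$ is an idempotent commuting with $\dif_c$. So $\SKh^{\dif_c}(L_{\bf n})=e\,\mH(\CKh^{\dif_c}(L^{\bf n}))$ is an $H$-equivariant direct summand, bigraded by homological and quantum degree.

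\emph{Step 3: invariance (parts 1 and 2).} Two diagrams $D_1,D_2$ of $L$ give cable diagrams $D_1^{\bf n},D_2^{\bf n}$ related by Reidemeister moves, each realized by a movie cobordism. By Theorem \ref{thm:uncolorKh}(i) and functoriality, the resulting isomorphism in $\mc{D}(H,d)$ induces an isomorphism of total homologies as bigraded $H$-modules. This isomorphism intertwines the $S_{\bf n}$-actions, because the braid cobordisms commute with the Reidemeister cobordisms up to isotopy of cobordisms, and functoriality (as in \cite{ETW}) gives equality of the induced maps. Applying $e$ gives isomorphic invariant summands, which proves parts 1 and 2. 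The main obstacle is this compatibility of the $S_{\bf n}$-action with Reidemeister isomorphisms. I would first try to deduce it from full functoriality, exactly as \cite{GLW} does in the non-equivariant case; the $\dif_c$-equivariance is then automatic from Step 1.

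\emph{Step 4: Euler characteristic (part 3).} Regard $\SKh^{\dif_c}(L_{\bf n})$ as a complex with zero differential, homological degree $i$ holding the invariant part of $\mH^i$. The slash Euler characteristic is then $\sum_i(-1)^i[e\,\mH^i]$. The task is to identify this with the colored Jones polynomial at a $2p$th root of unity.

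First, for $n<p$ the Jones--Wenzl projector $\jw_n$ equals the symmetrizer, and the colored Jones polynomial is the Jones evaluation of the cable decorated by $\jw_{n_i}$. Next, the graded $S_{\bf n}$-character of $\mH(\CKh(L^{\bf n}))$ computes the cable decorated by the symmetrizer idempotent. Over a field where $|S_{\bf n}|$ is invertible, the trace of $e$ on a complex equals the trace of $e$ on its homology. So as graded vector spaces, the alternating sum of $e\,\mH^i$ equals that of $e\,\CKh^i$. By Khovanov's identification \cite{Khcolored} (equivalently the isomorphism with the tensor-resolution complex of \cite{VM} for colors $<p$), this gives the colored Jones polynomial.

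The class in $\mathbb{O}_p$ depends only on the graded dimension modulo $1+q^2+\dots+q^{2(p-1)}$. We therefore get $\chi_{\mathbb{O}}(\SKh^{\dif_c}(L_{\bf n}))$ equal to the colored Jones polynomial evaluated at a $2p$th root of unity.
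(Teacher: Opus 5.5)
Your Steps 1--3 are sound. They make explicit what the paper compresses into ``the $p$-differential commutes with the symmetric group action'': the braid and Reidemeister maps are morphisms in $\mc{D}(H,d)$, hence $H$-linear on total homology. Full functoriality makes the Reidemeister isomorphisms $S_{\bf n}$-equivariant, so they restrict to $H$-isomorphisms of the invariant summands.

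The gap is in Step 4. The action of \cite{GLW} exists only on $\mH(\CKh(L^{\bf n}))$. The chain-level braid cobordism maps do not form an $S_{\bf n}$-action on the chain groups; the paper stresses that one does not know how to take invariants at chain level. So $e\,\CKh^i$ is undefined. Replacing $e$ by a chain-level lift and using the Hopf trace formula is legitimate, but you would then have to compute the graded Lefschetz traces of the braid-cobordism maps on $\CKh(L^{\bf n})$, and you do not. Nor does \cite{Khcolored} supply this: Khovanov's Euler characteristic computation concerns the tensor-resolution complex $\bigoplus_{\bf k}\CKh(L^{{\bf n}-2{\bf k}})$, not a symmetrizer acting on the cable complex.

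There is a further mismatch. The symmetrizer $\jw_n$ of the paper lives in Temperley--Lieb at $\xi=-1$, whereas the colored Jones polynomial is built from the quantum Jones--Wenzl projector at generic $q$. The $S_{\bf n}$-action on $\Kh(L^{\bf n})$ is classical, and a single permutation does not have graded trace equal to the quantum evaluation of the cable with that braid inserted. For the $2$-cable of the $0$-framed unknot, $\Kh=V\otimes V$ and the swap has graded trace $\pm(q^2+q^{-2})$, which is not a monomial multiple of $q+q^{-1}$. So the claim that the averaged graded character equals the $\jw$-decorated cable evaluation is exactly what must be proved; it is not an available input.

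The missing ingredient is the comparison theorem the paper uses. For colors $n_i<p$, the $S_{\bf n}$-invariant part of $\Kh(L^{\bf n})$ is isomorphic, as a bigraded vector space, to Khovanov's colored homology $\Kh(L_{\bf n})$. This is \cite[Proposition 3.18]{VM}, proved using \cite[Theorems 4.8, 4.10, 4.11]{BHPW2}, with \cite[Remark 3.17]{VM} covering the passage to characteristic $p$ when all colors are less than $p$.

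Given this isomorphism, part 3 follows immediately from your setup. The class of a graded $H$-module in $\mathbb{O}_p$ is its graded dimension modulo $1+q^2+\cdots+q^{2(p-1)}$. Hence $\chi_{\mathbb{O}}(\SKh^{\dif_c}(L_{\bf n}))$ is the reduction of the graded Euler characteristic of $\Kh(L_{\bf n})$, which is the colored Jones polynomial by \cite{Khcolored, BW}. You mention \cite{VM} only parenthetically, as ``equivalent'' to Khovanov's identification. It should instead replace the trace argument as the key step.
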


\begin{proof}
The fact that the homology without the $p$-differential is isomorphic to Khovanov's original colored theory $ \Kh(L_{\bf n})$ follows from \cite[Proposition 3.18]{VM} using \cite[Theorems 4.8, 4.10, 4.11]{BHPW2}.  Note that they work over a field of characteristic 0, but the proofs work assuming all the colors $n_i$ are less than $p$ \cite[Remark 3.17]{VM}.

Since the $p$-differential commutes with the symmetric group action, we immediately have the corresponding statements in the $p$-DG context.
\end{proof}

\subsubsection{Tilting idempotents}
We next seek to present colored homology theories using idempotents defined in Section \ref{sec:TLidemp}. To do so, we first recall some general setup concerning idempotents in homotopy categories. 

Let $M^\bullet$ be a bounded chain complex in the homotopy category $\mc{C}^b(\mc{A})$ of an abelian category $\mc{A}$, and let $e:M^\bullet \to M^\bullet$ be an idempotent endomorphism of $M^\bullet$ in the homotopy category. We would like to know whether there exist objects $e\cdot M^\bullet$, $(1-e)\cdot M^\bullet$ in $\mc{C}^b(\mc{A})$ such that
\[
M^\bullet \cong e\cdot M^\bullet \oplus (1-e)\cdot M^\bullet.
\]
The problem of existence and uniqueness of such a direct sum decomposition in the homotopy category is answered by the following result of Balmer and Schlichting.

\begin{thm}[Balmer--Schlichting]
\label{thm:BaSch}Let $\mc{A}$ be an abelian category. Then the bounded homotopy category $\mc{C}^b(\mc{A})$ is idempotent complete. Furthermore, any object representing the image of an idempotent is unique up to homotopy equivalence.
\end{thm}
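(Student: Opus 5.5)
Uniqueness is formal, so the real content is the existence of a splitting of an idempotent $e$ of $M^\bullet$ in $\mc{C}^b(\mc{A})$, where $e$ is only idempotent up to homotopy. The plan is to split $e$ first in a larger category where idempotent completeness is automatic, and then to show the resulting summand is homotopy equivalent to a bounded complex.

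\textbf{Uniqueness.} Suppose $N$ and $N'$ both represent the image of $e$. Then there are maps $r\colon M^\bullet\to N$ and $i\colon N\to M^\bullet$ with $ri=\id_N$ and $ir=e$, up to homotopy, and similarly $r',i'$ for $N'$. The composites $r'i\colon N\to N'$ and $ri'\colon N'\to N$ are mutually inverse in $\mc{C}^b(\mc{A})$, since
\[
ri'r'i = r e i = r i r i = \id_N ,
\]
and symmetrically $r'iri'=\id_{N'}$. Hence $N\simeq N'$.

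\textbf{Step 1: split in the unbounded homotopy category.} The homotopy category $\mc{C}(\mc{A})$ of unbounded complexes has countable coproducts if $\mc{A}$ has them. If $\mc{A}$ lacks them, I would embed $\mc{A}$ into its Ind-completion, which has them, and work there. By the Bökstedt--Neeman argument, a triangulated category with countable coproducts is idempotent complete. Concretely, let $N$ be the homotopy colimit of
\[
M^\bullet \xrightarrow{e} M^\bullet \xrightarrow{e} \cdots ,
\]
that is, the cone of $1-\mathrm{shift}$ on $\bigoplus_{\N} M^\bullet$. Then $N$ is a summand of $M^\bullet$ realizing $e$, and its complement is realized the same way using $1-e$.

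\textbf{Step 2: show $N$ is bounded up to homotopy.} This is where the work lies. I would use the fact that $M^\bullet$ is bounded in degrees $[a,b]$ and induct on $b-a$ via the stupid truncation triangle
\[
\sigma_{\ge b}M \to M \to \sigma_{<b}M .
\]
The difficulty is that $e$ need not respect this triangle, so one cannot simply split degree by degree. Balmer--Schlichting avoid this by working with the bounded category directly: they show that idempotent completeness of a triangulated category $\mc{K}$ is equivalent to the condition that every object $X$ with $X\oplus X[1]$ in $\mc{K}$ already lies in $\mc{K}$, after embedding $\mc{K}$ into its idempotent completion. One then constructs the summand explicitly: $N\oplus N[1]$ is the cone of $e$ restricted appropriately, and is realized as the cone of the bounded map
\[
1-e\colon M\to M .
\]
I would follow that route. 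Use the Karoubi envelope $\widetilde{\mc{C}^b(\mc{A})}$, which is triangulated by Balmer--Schlichting's general theorem. Then prove that an object $X$ of the envelope whose image in $K_0$-type terms vanishes, in particular one of the form $Y\oplus Y[1]$, lies in $\mc{C}^b(\mc{A})$.

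\textbf{Main obstacle.} The hard step is exhibiting a bounded complex with a homotopy idempotent isomorphic to the image of $e$. For this I would use the standard trick that every homotopy idempotent on a bounded complex of an abelian (hence Karoubian) category can be replaced, up to homotopy, by a strict idempotent. This is done by the Gauss-elimination / homological perturbation argument, degree by degree from the top degree down, using that $\mc{A}$ itself is idempotent complete. The strict image then gives the bounded complex $e\cdot M^\bullet$.
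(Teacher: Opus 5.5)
You have a genuine gap in the existence part. Your uniqueness argument is correct and is the same Yoneda argument the paper gives.

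\textbf{The reduction does not close.} Step~1 is fine, but it only splits $e$ in the much larger category $\mathcal{C}(\mathrm{Ind}\,\mathcal{A})$. Step~2, which carries all the content, is left open. You correctly reduce to showing that an object $N$ of the Karoubi envelope with $N\oplus N[1]\cong \mathrm{cone}(1-e)\in\mathcal{C}^b(\mathcal{A})$ already lies in $\mathcal{C}^b(\mathcal{A})$. You then propose to show that objects of vanishing $K_0$-class, ``in particular $Y\oplus Y[1]$'', lie in $\mathcal{C}^b(\mathcal{A})$. That is the wrong target, since $N\oplus N[1]$ is already known to lie there. By Thomason's density criterion, what you need is that $[N]$ lies in the image of $K_0(\mathcal{C}^b(\mathcal{A}))$, and that is the whole difficulty.

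These formal steps cannot suffice on their own. They apply verbatim to $\mathcal{C}^b(\mathrm{free}_R)$, the bounded homotopy category of finitely generated free modules. That category is not idempotent complete when $R$ has a projective $P$ with $P\oplus Q\cong R^n$ and $P$ not stably free. An image of the corresponding idempotent would be a bounded complex of free modules homotopy equivalent to $P$ in $\mathcal{C}^b(\mathrm{proj}_R)$, forcing $[P]\in \Z[R]\subset K_0(R)$. So the idempotent completeness of $\mathcal{A}$ must enter in an essential way.

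\textbf{Where you use that $\mathcal{A}$ is Karoubian.} The only place is your last paragraph, and there the key step is asserted rather than proved. Replacing a homotopy idempotent by a strict one, up to homotopy equivalence of the complex, is equivalent to the splitting you want. Invoking it as a ``standard trick'' is therefore circular. The suggested top-down degreewise argument also fails at the first step. If $e^2-e=dh+hd$, then in the top degree $b$ you only get $(e^b)^2-e^b=d^{b-1}h^b$. So $e^b$ is not an idempotent of $M^b$ and cannot be split in $\mathcal{A}$. Nor can you lift idempotents modulo null-homotopic maps, since that ideal need not be nilpotent; it contains $\id_M$ when $M^\bullet$ is contractible.

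\textbf{How the paper handles it.} The paper cites Balmer--Schlichting's Theorem~2.8, that bounded derived categories of (idempotent complete) exact categories are idempotent complete. It applies this to $\mathcal{A}$ with the split exact structure. There, bounded acyclic complexes are contractible, so $D^b$ coincides with $\mathcal{C}^b(\mathcal{A})$.

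To turn your outline into a proof, either do the same, or give a genuine induction on the length of $M^\bullet$. For instance, show that stupid-truncation (weight) decompositions extend to retracts in the Karoubi envelope. Then use that $\mathcal{A}$ is Karoubian to handle complexes of length one.
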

\begin{proof}
    In \cite[Theorem 2.8]{BaSch}, Balmer and Schlichting prove that the bounded derived category of an exact category is idempotent complete. To apply their result for homotopy categories, one just needs the abelian category $\mc{A}$ to be equipped with the exact structure where admissible exact sequences are split short exact sequences in $\mc{A}$. With respect to this exact structure, the bounded derived category of the exact category $\mc{A}$ agrees with the usual bounded homotopy category $\mc{C}^b(\mc{A})$. 

    The second statement follows from the Yoneda Lemma. More precisely, if $M^\bullet \in \mathcal{C}^b(\mathcal{A})$, and $e\in \End_{\mc{C}^b(\mc{A})}(M^\bullet)$ is an idempotent, then an image object $I^\bullet\in \mc{C}^b(\mc{A})$ of $e$ is represented by the datum
    \[ \iota: I^\bullet \to M^\bullet, \quad \jmath: M^\bullet \to I^\bullet \]
    satisfying $\jmath \circ \iota \simeq \Id_{I}$, $\iota\circ \jmath\simeq e$. If 
    \[ \iota^\prime: J^\bullet \to M^\bullet, \quad \jmath^\prime: M^\bullet \to J^\bullet \]
    is another such datum, then $\jmath^\prime \circ \iota$ and $\jmath \circ \iota^\prime$ are clearly homotopy equivalences that are inverses of each other.
\end{proof}

Now let $L_{//}$ denote the link $L$ where each component $L_i$ has been replaced by its $(n_i)$-anti-parallel cabling. 

For any integer $n \geq 0 $, $E_n \times \mathbb{S}^1$ is a formal linear combination of cobordisms from the $n$-anti-parallel cabling of the unknot to itself. Similarly, if $K$ is an oriented framed knot (or a component of a link), one can construct a cobordism $E_n(K)$ from its $n$-anti-parallel cabling to itself. This cobordism induces an endomorphism of the Khovanov chain complex of the $n$-anti-parallel cabling to itself. This turns out to be an idempotent. Idempotents on different components of a link commute. 

The \emph{tilting colored Khovanov} chain complex, well-defined up to homotopy thanks to Theorem \ref{thm:BaSch}, of a link $L=(L_i)$ colored by ${\bf n}=(n_i)$ is the image of $\CKh(L_{//})$ under the composition of these idempotents (which is itself an idempotent denoted $E_{\bf n}(L)$).  Denote this image by $\TKh(L_{\bf n})$. 
The idempotent $E_{\bf n}$ commutes with $\dif_c$ so that one can consider $\TKh^{\dif_c}(L_{\bf n})$ and its image $\TKh^{\dif_c}_/(L_{\bf n})$ in $H\udmod$ (slash cohomology). As in Theorem~\ref{thm:SKH}, one has:

\begin{thm}
Let ${\bf n}=(n_1,\ldots,n_r)$. 
\begin{enumerate}
\item If $D_1$ and $D_2$ are two planar projections of a link $L$, then their associated tilting colored complexes are isomorphic in the relative homotopy category.
    \item $\TKh^{\dif_c}_/(L_{\bf n})$ is a framed link invariant.  
        \item $\chi_{\mathbb{O}}(\TKh_/^{\dif_c}(L_{\bf n}))$ is the colored Jones polynomial evaluated at a $2p$th root of unity, when all $n_i < p$.
    \item $\chi_{\mathbb{O}}(\TKh_/^{\dif_c}(L_{\bf n}))\equiv 0$, when all $n_i \geq p-1$.
\end{enumerate}
\end{thm}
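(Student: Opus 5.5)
Four ingredients do the work: invariance of the cabled complex, idempotent completeness of the homotopy category (Theorem~\ref{thm:BaSch}), compatibility of the idempotents with $\dif_c$, and a Grothendieck-group computation governed by the tilting recursion (Theorem~\ref{thm:tilting-JW-recursion}).

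\textbf{Invariance (parts (1) and (2)).} Let $D_1, D_2$ be diagrams of $L$, with anti-parallel cablings $D_{1,//}, D_{2,//}$. These are diagrams of the same framed link $L_{//}$. By Theorem~\ref{thm:uncolorKh} and functoriality, a framed isotopy of $L$ cabled componentwise gives a sequence of Reidemeister moves between $D_{1,//}$ and $D_{2,//}$. This induces a homotopy equivalence $\Phi\colon\CKh^{\dif_c}(D_{1,//})\to \CKh^{\dif_c}(D_{2,//})$, which is $H$-equivariant up to $\mathrm{Ker}(\mc F)$, i.e.\ an isomorphism in the relative homotopy category.

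The main point is naturality: $\Phi$ intertwines $E_{\bf n}(D_1)$ and $E_{\bf n}(D_2)$, up to homotopy. Each $E_{n_i}(L_i)$ is a linear combination of cobordisms supported in a tubular neighbourhood of a small arc of $L_i$. The isotopy can be chosen to avoid that arc, or the arc can be slid along by isotopy, so the two movie cobordisms commute up to isotopy rel boundary. Functoriality (\cite{ETW}, with $\dif_c$ killing cups, caps and saddles) then gives commuting squares in the homotopy category.

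Since $\dif_c$ annihilates the elementary foams generating the Temperley--Lieb cobordisms, each $E_{n_i}$ is a $p$-DG endomorphism. Hence $E_{\bf n}$ is an idempotent in $\mc C^b(\Bbbk\#H,d)$, and its image exists by Theorem~\ref{thm:BaSch} applied to the abelian category of graded $H$-modules. The intertwining relation shows that $\Phi$ restricts to an isomorphism of images: we get $\jmath_2\Phi\iota_1$ with inverse $\jmath_1\Phi^{-1}\iota_2$. Passing to $H\udmod$ via Theorem~\ref{thm-Rickard} gives (2).

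\textbf{Euler characteristic (parts (3) and (4)).} The map $\chi_{\mathbb O}$ is additive on split idempotent decompositions, and it is compatible with the Temperley--Lieb evaluation. Concretely, $\chi_{\mathbb O}$ of $\CKh^{\dif_c}$ of a cabled link, decorated by a TL element per component, equals the Jones/Kauffman evaluation of the satellite with that decoration, specialized at $q=$ a $2p$th root of unity. This is checked on generating cobordisms via Theorem~\ref{thm:uncolorKh}(iii): the complex of a web-decorated link is functorial and additive.

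Therefore $\chi_{\mathbb O}(\TKh^{\dif_c}_/(L_{\bf n}))$ is the satellite evaluation of $L$ decorated by $E_{n_i}$ on each component. For $n_i<p$ we have $E_{n_i}=\jw_{n_i}$. Closing a cable decorated by $\jw_n$ gives the colored Jones polynomial, and the coefficients $\frac{n}{n+1}$ in \eqref{eqn:Vrec} make sense in $\Bbbk$. Reducing to $\mathbb O_p$ matches the generic Jones--Wenzl recursion specialized at a $2p$th root of unity, whose quantum integers $[k]$ for $k<p$ correspond to the invertible factors in characteristic $p$. This gives (3).

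For (4), the plan is to show that for $n\ge p-1$ the class of $E_n$ in $K_0$ of the Karoubian TL category lies in the tensor ideal generated by the Steinberg object $T(p-1)$ (negligible/projective tiltings). Its image under $\chi_{\mathbb O}$ then vanishes, because the closure of anything factoring through $\jw_{p-1}$ evaluates to a multiple of $[p]=0$ in $\mathbb O_p$. The approach is induction on $n$ via Theorem~\ref{thm:tilting-JW-recursion}:
\[
[T(n)]=[T(n-1)][T(1)]-\sum_i x_i[T(n-2p^i)].
\]
For $n\ge p-1$, each $T(n)$ is a summand of $T(p-1)\otimes T(m)$-type modules, since tiltings of highest weight $\ge p-1$ are projective over $\ul_p$. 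One then verifies that the quantum trace, i.e.\ the $\mathbb O_p$-valued evaluation of any closure, vanishes.

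\textbf{Main obstacle.} The hardest step is making the $K_0$-compatibility rigorous when $\chi_{\mathbb O}$ is applied to the image of an idempotent that exists only in the homotopy category. I would handle this by noting that $\chi_{\mathbb O}$ factors through $K_0$ of the idempotent-complete triangulated category, where $[M]=[eM]+[(1-e)M]$. The characters of the images are then computed recursively from \eqref{eq:tilting-JW-inductive}, because each $T_n^i$ factors through $E_{n-2p^i}$ by construction.
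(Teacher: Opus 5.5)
\textbf{Parts (1)--(2).} Your invariance argument is in line with the paper, which only says ``as in Theorem~\ref{thm:SKH}'' (the idempotent $E_{\bf n}$ commutes with $\dif_c$, and invariance is inherited from the uncolored theory). It needs two corrections.
\begin{itemize}
\item $E_n(K)$ is not supported near a small arc. A Temperley--Lieb diagram placed on an arc changes the tangle; it is not an endomorphism of the complex. The cobordism is $E_n$ times the whole framed component, as with $E_n\times\mathbb S^1$. So commutation with $\Phi$ must come from an isotopy rel boundary of the two composite surfaces, plus functoriality.
\item That functoriality must mean equality in $\mc D(H,d)$, where $\Bbbk$-linear homotopies do not identify equivariant maps. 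For example, the map $\Bbbk\to(U_1\to\Bbbk)$ that is the identity in degree $0$ is null-homotopic over $\Bbbk$, yet it represents the nonsplit extension $0\to q^2\Bbbk\to U_1\to\Bbbk\to 0$. Consequently $E_{\bf n}$ is only known to be idempotent in $\mc D^b(H,d)$, not in $\mc C^b(\Bbbk\#H,d)$. Take the image in $\mc D^b(H,d)$, which is idempotent complete by Balmer--Schlichting.
\end{itemize}

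\textbf{The gap in (3)--(4).} The step that fails is the claim that $\chi_{\mathbb O}$ equals ``the satellite evaluation of $L$ decorated by $E_{n_i}$''.
\begin{itemize}
\item $E_n$ lives in $\TL_{\Bbbk,-1}$, with coefficients such as $\frac{n}{n+1}$, $\frac{a_i-1}{a_i}$, $\frac{2}{a_i^2}$ in a field of characteristic $p$.
\item For a general decoration, the only ``decorated Euler characteristic'' a complex provides is a Lefschetz supertrace over $\Bbbk$. That sees graded dimensions only modulo $p$.
\item But $\chi_{\mathbb O}(M^\bullet)$ is the image, in the torsion-free ring $\mathbb O_p$, of the integral quantity $\sum_i(-1)^i\mathrm{gdim}\,M^i$. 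No computation over $\Bbbk$ can determine it.
\item Your coefficient-matching remark runs the wrong way. The available ring map is $\mathbb O_p\to\Bbbk$, $q\mapsto-1$ (the specialization $\xi=-1$), so agreement after reduction proves nothing in $\mathbb O_p$.
\item The same conflation affects (4). Saying a closure through the characteristic-$p$ projector $\jw_{p-1}$ is a multiple of $[p]_{-1}=p=0$ is a statement in $\Bbbk$, not in $\mathbb O_p$.
\end{itemize}

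\textbf{What the repair needs.} Your last paragraph names the right replacement, but it has to carry the whole argument rather than patch it.
\begin{itemize}
\item Use only two facts: $\chi_{\mathbb O}$ is additive, and Karoubi-isomorphic idempotents have isomorphic images. The isomorphisms are Temperley--Lieb morphisms times the component, i.e.\ Euler-characteristic-zero (hence degree-zero) cobordisms, so they are morphisms in $\mc D(H,d)$.
\item The image of $E_{n-1}\otimes\id_1$ on the $n$-cable is $\TKh$ of a different link: the component is replaced by its $2$-cable colored $(n-1,1)$. So \eqref{eq:tilting-fusion-idempotents} relates $\chi_{\mathbb O}$ of \emph{different} colored links.
\item You must therefore induct over all framed colored links at once (on the maximal color and the number of components attaining it), starting from Theorem~\ref{thm:uncolorKh}(iii).
\item On the decategorified side, the matching recursion comes from the satellite identity $J_{K^{(2)}}(V_a,V_b)=J_K(V_a\otimes V_b)$. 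The conclusion is that $\chi_{\mathbb O}$ is the image in $\mathbb O_p$ of the generic colored Jones invariant, extended multilinearly to the classes $[T(n_i)]$ written in Weyl characters.
\item For $n_i<p$ the tilting fusion rule is the generic Clebsch--Gordan rule, which gives (3).
\item For (4), the triangular relation $[T(p-1)][T(m)]=[T(p-1+m)]+\sum_{p-1\le k<p-1+m}c_k[T(k)]$ puts every $[T(n)]$ with $n\ge p-1$ in the ideal generated by $[T(p-1)]$.
\item You still need an integral fact: a colored Jones polynomial with a component colored $V_{p-1}$ is divisible by $[p]$ in $\mathbb Z[q^{\pm1}]$. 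This holds because the cut-open $(1,1)$-tangle acts on the irreducible $V_{p-1}$ by an integral scalar. Since $[p]=0$ in $\mathbb O_p$, (4) follows.
\end{itemize}
For (3), the paper takes a different route: as in Theorem~\ref{thm:SKH}, it compares with Khovanov's colored homology via von Merkl, whose Euler characteristic is the integral colored Jones polynomial. It gives no separate argument for (4).
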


\begin{rem}
Note that in \cite{VM}, the homology of the colored Khovanov complex $\TKh(L_{\bf n})$ is denoted by \emph{(Im)}. When all the coloring labels are less than $p$, there is an isomorphism of the usual homology $\mH(\TKh(L_{\bf n})) \cong \SKh^{}(L_{\bf n}) $.
\end{rem}

\subsection{Functoriality}

\begin{thm}
  The tilting colored Khovanov homology $\TKh$ induces a functor from the category of colored framed link cobordisms to that of doubly graded vector spaces.  
\end{thm}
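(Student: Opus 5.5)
The plan is to define the functor first on the cabled level, using the full functoriality of $\mathfrak{gl}(2)$ Khovanov homology (Ehrig--Tubbenhauer--Wedrich \cite{ETW}). Then we show that the tilting idempotents are natural with respect to cabled cobordisms, so that the construction descends to images.

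\textbf{Step 1: cabling cobordisms.} A colored framed link cobordism $\Sigma\colon L_{\bf n}\to L'_{\bf n'}$ has colors constant along each connected component of $\Sigma$. It therefore admits an $n$-anti-parallel cabling $\Sigma_{//}\colon L_{//}\to L'_{//}$, obtained by pushing off $\Sigma$ in a normal direction determined by the framing. By \cite{ETW} (and its $\dif_c$-version, the uncolored functoriality theorem above), this gives a chain map
\[
\CKh(\Sigma_{//})\colon \CKh(L_{//})\to \CKh(L'_{//}),
\]
well defined up to homotopy and functorial under composition of cobordisms. The same holds for the $H$-equivariant version, since $\dif_c$ kills cups, caps and saddles.

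\textbf{Step 2: naturality of idempotents.} The key claim is
\[
\CKh(\Sigma_{//})\circ E_{\bf n}(L)\simeq E_{\bf n'}(L')\circ \CKh(\Sigma_{//}).
\]
To prove it, decompose $\Sigma$ into elementary pieces: Reidemeister/isotopy movies, births, deaths and saddles. Each cabled elementary piece is a finite composition of elementary foams on parallel sheets, and $E_n\times(\text{interval})$ is a linear combination of Temperley--Lieb diagrams times an interval. Placing the idempotent near the input or the output boundary of $\Sigma_{//}$ gives isotopic foams, because the idempotent lives in a collar $E_n\times \mathbb{S}^1\times I$ and can be slid along the parallel sheets. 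By functoriality of \cite{ETW}, isotopic foams induce homotopic maps.

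\textbf{Step 3: descending to images.} Given Step 2, define
\[
\TKh(\Sigma):=\jmath'\circ \CKh(\Sigma_{//})\circ\iota,
\]
using the image data $(\iota,\jmath)$ from Theorem \ref{thm:BaSch}. Composition is preserved: $\jmath''\CKh(\Sigma'_{//})\iota'\jmath'\CKh(\Sigma_{//})\iota$ equals $\jmath''\CKh(\Sigma'_{//})E'\CKh(\Sigma_{//})\iota$, which by Step 2 is homotopic to $\jmath''\CKh((\Sigma'\Sigma)_{//})\iota$. Identities go to identities because $\jmath\iota\simeq\Id$. The choice of image data affects the functor only by canonical homotopy equivalences, again by Theorem \ref{thm:BaSch}. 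Taking homology then yields doubly graded vector spaces.

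\textbf{Main obstacle.} The hardest part is Step 2 for cobordisms whose cabling is not simply a product, namely births, deaths and saddles. There the cabled surface consists of $n$ nested caps or nested saddles, and the idempotent must be moved through a locus where sheets merge. For a birth, I would first show that $E_n$ applied to $n$ nested cups equals the cup foam with $E_n$ pulled across, using the pivotal (rotation) invariance of $E_n$ in $\TL_{\Bbbk,-1}$. Since $E_n$ lacks left--right symmetry, this must be checked carefully: the rotation of $E_n$ should equal its reflection $E_n^\ell$, which defines an isomorphic summand. This may force using $E_n^\ell$ on one side and inserting the comparison isomorphism from the $A,\widehat A$ argument in Theorem \ref{thm:tilting-JW-recursion}. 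Saddles are handled similarly, by viewing the cabled saddle as a $\mathrm{TL}$-diagram times a saddle and using the cap/cup case together with invariance under the rotation symmetry.
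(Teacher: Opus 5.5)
Your Step 2 fails as soon as $\Sigma$ contains a saddle. Without it, the definition $\TKh(\Sigma)=\jmath'\circ\CKh(\Sigma_{//})\circ\iota$ of Step 3 no longer respects composition.

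\textbf{A concrete counterexample.} Let $K_1,K_2$ be two unlinked $2$-colored unknots, and let $S$ be the saddle merging them into an unknot $K$.
\begin{itemize}
\item The cabled state spaces are $A^{\otimes 2}\otimes A^{\otimes 2}$ and $A^{\otimes 2}$, with $A=\Bbbk[x]/(x^2)$; write inner $\otimes$ outer circle.
\item Up to sign, $S_{//}$ multiplies the two inner circles together and the two outer circles together.
\item $E_2=\Id-\delta^{-1}U$, where $U$ is annulus death followed by annulus birth and $\delta$ is the torus evaluation.
\end{itemize}
The vector $w=U(1\otimes x)$ is a nonzero multiple of $1\otimes x\pm x\otimes 1$, and $E_2w=0$. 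Hence $S_{//}\circ(E_2(K_1)\otimes E_2(K_2))$ kills $w\otimes(1\otimes x)$. But $S_{//}(w\otimes(1\otimes x))=\pm x\otimes x$, and $E_2(K)$ fixes $x\otimes x$ because the annulus death kills it. So $E_2(K)\circ S_{//}\neq S_{//}\circ(E_2(K_1)\otimes E_2(K_2))$, whatever the sign conventions.

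Your sliding argument is valid only on the product (isotopy) parts of $\Sigma$. At births and deaths the commutation does hold, but the reason is that the non-identity terms of $E_n$ close off undotted spheres, not an isotopy or rotation invariance of $E_n$. At saddles the claim is simply false.

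\textbf{The defect survives projection.} Let $\Sigma$ split a $2$-colored unknot and $\Sigma'$ merge it back. Up to $\iota$ and $\jmath$, the difference $\TKh(\Sigma')\circ\TKh(\Sigma)-\TKh(\Sigma'\Sigma)$ is the map $E_2\circ m_{//}\circ(E_2\otimes E_2-\Id)\circ\Delta_{//}\circ E_2$. Cap off the inputs and outputs; the final $E_2$ does not affect this, since annulus birth followed by two caps is a sphere. Then the value on $1\otimes 1$ has $x\otimes x$-coefficient $\pm(2\delta^{-1}T-\delta^{-2}T^{2})=\pm1$, where $T=\delta$ is the torus. So inserting projectors at intermediate frames genuinely changes the map. ``Cable the whole surface, then project once'' does not give a functor.

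\textbf{What the paper does instead.} It never asks projectors to commute with saddles.
\begin{itemize}
\item It defines $\TKh$ only on elementary cobordisms, $\TKh(\Sigma)=E_{\mathbf{n}^2}(D^2)\circ\CKh(\Sigma_{//})\circ E_{\mathbf{n}^1}(D^1)$, so a projector sits at every frame of a movie.
\item It then checks the framed movie moves of Beliakova--Wehrli. Intermediate projectors are slid through the Reidemeister frames to the first and last frames, which agree on both sides of a move. Functoriality of uncolored Khovanov homology of the cabled movie then finishes the argument.
\item The one exception is MM11, where the projector sits on a newborn circle and cannot be pushed to the ends. There the paper uses exactly the sphere argument: every non-identity term of $E_n$ contains $\cap\times\mathbb{S}^1$, which together with the cabled births bounds an undotted sphere, so the projector acts as the identity.
\end{itemize}
The paper also deliberately avoids cabling whole surfaces, remarking that this seems hard to do functorially. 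Your Step 1 takes that for granted.
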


\begin{rem}
  The cabling operation seems hard to achieve functorially, so we cannot just say that Khovanov homology of the cabling is functorial as a composition of two functors. 
\end{rem}

\begin{proof}
Given an oriented, colored, and framed link diagram $D_{\bf n}$, one can associate (non-functorially) a diagram $D_{//}$ for its ${\bf n}$-anti-parallel cabling. In the homotopy category, the Khovanov chain complex of $D_{//}$ is well-defined by functoriality of Khovanov homology. Indeed, the small variations one may encounter when one cables diagrams or surfaces are harmless due to functoriality of Khovanov homology.

  For the same reasons, given a diagram $D_{\bf n}$, there is a well-defined projector $E_{\bf n}$ on $\CKh(D_{//})$.

  For any elementary (oriented, colored, framed) cobordism  $\Sigma \colon D^1_{\bf n^1} \to D^2_{\bf n^2}$, define 
\[
  \TKh(\Sigma)= E_{\bf n^2}(D^2) \circ \CKh(\Sigma_{//}) \circ  E_{\bf n^1}(D^1),
\]
where $\Sigma_{//}$ denotes the cabling of the surface $\Sigma$.
  We want to prove that different compositions of elementary cobordisms yielding the same cobordism $\Sigma \colon D^1 \to D^2$ give the same map from $\TKh(D^1) \to \TKh(D^2)$. Movie moves for framed oriented cobordisms are listed in \cite{BW}. We need to prove invariance under these movie moves.

  For all but one of them, invariance follows from the functoriality of Khovanov homology. Indeed, idempotents are given by cobordisms. By functoriality of Khovanov homology, one can move these cobordisms and for all these movie moves, one can actually push them to the first and last diagrams (which are the same for the two sides of the movie moves). Once this is done, functoriality of the uncolored Khovanov homology on the cabling of the movie move implies invariance. 

  \begin{figure}[ht]
    \centering
    \includegraphics[width=10cm]{\imagesfolder/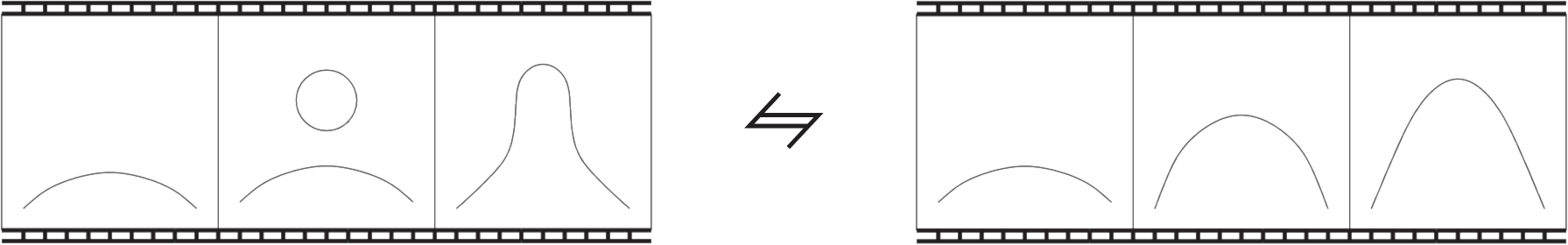}
    \caption{The movie move MM11}
    \label{fig:MM11}
  \end{figure}

  For the movie move MM11 (see Figure~\ref{fig:MM11}), this does not work since one cannot push the idempotent appearing in the middle frame on the circle all the way through the top or the bottom (because the circle disappears).

However, invariance does hold for other reasons, as we shall now explain.
In the linear combination that defines the idempotent, there is one copy of the identity and a linear combination of terms that contain $\cap \times \mathbb{S}^1$. Composing such a term with two births (i.e., cups) forms an undotted sphere (see Figure~\ref{fig:spheres_idem}) which maps to zero by $\Kh$. Therefore, in this movie move, this idempotent acts as the identity and we can therefore replace it by the identity. Once this is done, we argue, as before, by functoriality of uncolored Khovanov homology on the cabling of the movie move MM11.\qedhere
\begin{figure}[ht]
    \centering
    \NB{\tikz[scale=0.8]{\begin{scope}[scale=0.7, yscale=-1]
  \begin{scope}
    \draw (-1, 0) arc (180:300:1cm);
    \draw (-1, 0) arc (180:0:3cm);
    \draw (5, 0) arc (360:240:1cm);
    \draw[very thin] (0, -1) arc (180:360:2cm and -0.2cm);
    \draw[very thin] (0, -1) arc (180:0:2cm and -0.2cm);
  \end{scope}
  \begin{scope}[densely dotted]
    \draw (0, -1) arc (270:360:1cm);
    \draw (1, 0) arc (180:0:1cm);
    \draw (3, 0) arc (180:270:1cm);    
    \draw[very thin] (2,0) circle (1cm and 0.1cm);    
  \end{scope}

\node at (6.5, 1) {$\simeq$};

\begin{scope}[xshift=10cm, yshift=1cm]
  \draw (0,0) circle (2cm);
  \draw[very thin] (-2, 0) arc (180:360:2cm and -0.2cm);
  \draw[densely dotted, very thin] (-2, 0) arc (180:0:2cm and -0.2cm);
\end{scope}

\end{scope}}}
    \caption{Undotted sphere appearing when precomposing an idempotent with cups. }
    \label{fig:spheres_idem}
  \end{figure}
\end{proof}

Since the vertical idempotents are linear combinations of cobordisms, one obtains the following corollary immediately. 
\begin{cor}
    Tilting colored Khovanov homologies endowed with the $p$-differentials $\dif_c$ provide functors from the category of colored framed link cobordisms to the stable category $H\udmod$.
\end{cor}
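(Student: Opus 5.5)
The plan is to promote the preceding functoriality theorem for $\TKh$ to the $p$-DG setting in three steps: check equivariance at the level of chain maps, check that the relations we used are compatible with $\dif_c$, and then pass to the stable category.

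\emph{Equivariance of the building blocks.} For an elementary colored cobordism $\Sigma\colon D^1_{\bf n^1}\to D^2_{\bf n^2}$ we keep the formula
\[
\TKh^{\dif_c}(\Sigma)=E_{\bf n^2}(D^2)\circ \CKh^{\dif_c}(\Sigma_{//})\circ E_{\bf n^1}(D^1).
\]
We must check that each factor is a morphism in $\mc{C}(\Bbbk\#H,d)$, i.e.\ that it commutes with $\dif_c$.
\begin{enumerate}
\item For $\CKh^{\dif_c}(\Sigma_{//})$, this is the uncolored functoriality theorem applied to the cabled cobordism $\Sigma_{//}$. That theorem rests on the fact that $\dif_c$ annihilates cup, cap and saddle foams, together with the green-dot twisting on crossings.
\item For the idempotents $E_{\bf n}$, note that by Theorem~\ref{thm:tilting-JW-recursion} they are $\Bbbk$-linear combinations of Temperley--Lieb diagrams. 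Consequently $E_{\bf n}(K)$ is a linear combination of products of $\mathbb{S}^1$ (or of the framed knot) with cups, caps and identity strands, so it is a linear combination of undotted annuli and identity sheets. By \eqref{eq:e-act-cup} and the Leibniz rule, $\dif_c$ kills each such undotted foam. The twisted action at green dots only involves the cabled boundary webs and so commutes with these cylinders.
\end{enumerate}
Hence every factor, and therefore every composite $\TKh^{\dif_c}(\Sigma)$, is an $H$-equivariant chain map. Its image object is well defined up to homotopy by Theorem~\ref{thm:BaSch}, applied in the homotopy category of $H$-modules; we can do this because the idempotent is itself $H$-equivariant.

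\emph{Movie moves.} The relations proving invariance under movie moves in the previous theorem are identities between linear combinations of cobordisms. They hold in the uncolored Khovanov functor, and its $p$-DG enhancement $\CKh^{\dif_c}$ is a functor to $\mc{D}(H,d)$ by the uncolored functoriality theorem, so the same identities hold there. The special move MM11 used the evaluation of an undotted sphere to zero. That evaluation is a statement about foam evaluation, which is unchanged by the $H$-action, so replacing the middle idempotent by the identity remains valid equivariantly. We thus obtain a functor to $\mc{C}^\dif(\Bbbk,d)=\mc{D}(H,d)$.

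\emph{Passing to the stable category.} Finally we compose with the Verdier quotient $\mc{D}^b(H,d)\to\mc{D}^b(H,d)/\mc{P}_H\simeq H\udmod$ of Theorem~\ref{thm-Rickard}. Since it is a functor, composing with it preserves functoriality, and it yields $\TKh^{\dif_c}_/$.

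The main obstacle is the interaction of the homotopy equivalences used for the movie moves with the $H$-action. In the uncolored theory the chain homotopies realizing Reidemeister and movie-move relations need not be $H$-equivariant. The plan is to use that we only need equality in $\mc{C}^\dif(\Bbbk,d)$, where non-equivariant null-homotopies are already quotiented out by definition of $\mathrm{Ker}(\mc{F})$. So it suffices that the relevant maps agree up to (non-equivariant) homotopy, which is exactly what the uncolored argument provides.
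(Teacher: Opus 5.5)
Your overall route is the paper's. Its proof is the single remark that the idempotents are linear combinations of cobordisms, so the functoriality argument for $\TKh$ (MM11 included) runs in $\mc{D}(H,d)$ and then descends to $H\udmod$. Your movie-move step, which rests on the uncolored theorem that $\CKh^{\dif_c}$ is a functor to $\mc{D}(H,d)$, is fine.

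The genuine problem is your last paragraph, which you present as the way around the main obstacle. It is false that two $H$-equivariant maps agreeing up to a non-equivariant homotopy become equal in $\mc{C}^\dif(\Bbbk,d)=\mc{D}(H,d)$. The Verdier quotient by $\mathrm{Ker}(\mc{F})$ kills objects. So an equivariant map that is a non-equivariant homotopy equivalence does become invertible, since its cone lies in $\mathrm{Ker}(\mc{F})$, and this suffices for invariance of objects. But the induced functor $\mc{D}(H,d)\to\mc{C}(\Bbbk,d)$ is not faithful. For instance, let $N=(H\xrightarrow{\epsilon}\Bbbk)$ sit in homological degrees $-1,0$, with $\epsilon$ the augmentation, and let $f\colon\Bbbk\to N$ be the identity in degree $0$. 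Then $f$ is null-homotopic via the non-equivariant map $1\mapsto 1\in H$. However, $f$ is the inclusion of a subcomplex whose cokernel is $H$ in degree $-1$, which lies in $\mc{P}_H$. Hence $f$ becomes an isomorphism $\Bbbk\cong N$ in $\mc{D}^b(H,d)/\mc{P}_H\simeq H\udmod$, and in particular $f\neq 0$ in $\mc{D}(H,d)$.

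Movie moves are equalities of morphisms, so non-equivariant homotopies give you nothing in $\mc{D}(H,d)$ or in $H\udmod$. The relations must hold in $\mc{D}(H,d)$ itself, for example via $H$-equivariant homotopies, and that is exactly what the uncolored functoriality theorem for $\CKh^{\dif_c}$ asserts.

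The same issue affects two places where you argue by hand:
\begin{itemize}
\item For a knot $K$ with crossings, the chain map of $E_{\bf n}(K)$ is not realized by undotted annuli alone. Passing the annuli $\cap\times K$ through crossings involves Reidemeister maps, not just cups, caps and saddles.
\item $E_{\bf n}(L)$ is an idempotent only in the homotopy category, which is why Balmer--Schlichting is invoked. Equivariance of the chain map therefore does not by itself make it an idempotent in the homotopy category of $H$-modules.
\end{itemize}
Both points follow from the uncolored theorem, because $E_{\bf n}(L)$ is a linear combination of cobordisms and $E_{\bf n}(L)^2=E_{\bf n}(L)$ holds as such a combination. You should then apply Balmer--Schlichting in $\mc{D}^b(H,d)$, which is idempotent complete. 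With the last paragraph deleted and these steps routed through that theorem, your proof is the paper's.
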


\subsection{Examples}
\begin{example}\label{eg-colunknot}
As an $H$-module, the chain complex of the $n$-colored unknot $\ovoid_n$, for $n<p$
is:
\begin{equation}
\CKh^{\dif_c}(\ovoid_n) \cong 
\bigoplus_{j=0}^{n} q^{n-2j} U_0.
\end{equation}
\end{example}

\begin{example}\label{eg-framed2unknot}
As an $H$-module, the chain complex of the $2$-colored framed unknot $\ovoid_2^{+1}$
is:
\begin{equation}
\CKh^{\dif_c}(\ovoid_2^{+1}) \cong 
t^2 q^{-4} \left(q^{-4} U_1 \oplus t^{-2} U_0 \right) .
\end{equation}
This follows from Example \ref{ex:+1framed2unlink}.
\end{example}

\appendix

\addcontentsline{toc}{section}{References}

\bibliographystyle{alphaurl}
\bibliography{extracted}

\begin{thebibliography}{BHPW23}

\bibitem[BHPW19]{BHPW}
A.~Beliakova, M.~Hogancamp, K.~Putyra, and S.~Wehrli.
\newblock On the functoriality of sl(2) tangle homology, 2019.
\newblock \href {https://arxiv.org/abs/1903.12194} {\path{arXiv:1903.12194}}.

\bibitem[BHPW23]{BHPW2}
A.~Beliakova, M.~Hogancamp, K.~Putyra, and S.~Wehrli.
\newblock On unification of colored annular sl(2) knot homology, 2023.
\newblock \href {https://arxiv.org/abs/2305.02977} {\path{arXiv:2305.02977}}.

\bibitem[Bla10]{blan1}
C.~Blanchet.
\newblock An oriented model for {K}hovanov homology.
\newblock {\em J. Knot Theory Ramifications}, 19:291--312, 2010.
\newblock \href {https://arxiv.org/abs/1405.7246} {\path{arXiv:1405.7246}},
  \href {https://doi.org/10.1142/S0218216510007863}
  {\path{doi:10.1142/S0218216510007863}}.

\bibitem[BLS19]{BLS}
G.~Burrull, N.~Libedinsky, and P.~Sentinelli.
\newblock {$p$-Jones--Wenzl idempotents}.
\newblock {\em Adv. Math.}, 352:246--264, 2019.
\newblock \href {https://doi.org/10.1016/j.aim.2019.06.005}
  {\path{doi:10.1016/j.aim.2019.06.005}}.

\bibitem[BS01]{BaSch}
P.~Balmer and M.~Schlichting.
\newblock Idempotent completion of triangulated categories.
\newblock {\em Journal of Algebra}, 236(2):819--834, 2001.
\newblock URL:
  \url{https://www.sciencedirect.com/science/article/pii/S0021869300985296},
  \href {https://doi.org/10.1006/jabr.2000.8529}
  {\path{doi:10.1006/jabr.2000.8529}}.

\bibitem[BW08]{BW}
A.~Beliakova and S.~Wehrli.
\newblock Categorification of the colored {J}ones polynomial and {R}asmussen
  invariant of links.
\newblock {\em The Canadian Journal of Mathematics}, 60:1240--1266, 2008.
\newblock \href {https://arxiv.org/abs/math/0510382}
  {\path{arXiv:math/0510382}}, \href {https://doi.org/10.4153/CJM-2008-053-1}
  {\path{doi:10.4153/CJM-2008-053-1}}.

\bibitem[Cap08]{Cap}
C.~Caprau.
\newblock An sl(2) tangle homology and seamed cobordisms.
\newblock {\em Alg. Geom. Topol.}, 8(2):729--756, 2008.
\newblock \href {https://arxiv.org/abs/math/0707.3051}
  {\path{arXiv:math/0707.3051}}, \href {https://doi.org/10.2140/agt.2008.8.729}
  {\path{doi:10.2140/agt.2008.8.729}}.

\bibitem[CMW09]{CMW}
D.~Clark, S.~Morrison, and K.~Walker.
\newblock Fixing the functoriality of {K}hovanov homology.
\newblock {\em Geom. Topol.}, 13(3):1499--1582, 2009.
\newblock \href {https://arxiv.org/abs/math/0701339}
  {\path{arXiv:math/0701339}}, \href
  {https://doi.org/10.1142/S0218216521500747}
  {\path{doi:10.1142/S0218216521500747}}.

\bibitem[EST16]{EST1}
M.~Ehrig, C.~Stroppel, and D.~Tubbenhauer.
\newblock Generic $\mathfrak{gl}_2$-foams, web and arc algebras, 2016.
\newblock \href {https://arxiv.org/abs/1601.08010} {\path{arXiv:1601.08010}}.

\bibitem[ETW18]{ETW}
M.~Ehrig, D.~Tubbenhauer, and P.~Wedrich.
\newblock Functoriality of colored link homologies.
\newblock {\em Proc. Lond. Math. Soc. (3)}, 117(5):996--1040, 2018.
\newblock \href {https://arxiv.org/abs/1703.06691} {\path{arXiv:1703.06691}},
  \href {https://doi.org/10.1112/plms.12154} {\path{doi:10.1112/plms.12154}}.

\bibitem[GLW18]{GLW}
J.~E. Grigsby, A.~Licata, and S.~Wehrli.
\newblock Annular {K}hovanov homology and knotted {S}chur--{W}eyl
  representations.
\newblock {\em Comp. Math.}, 154:459--502, 2018.
\newblock \href {https://arxiv.org/abs/1505.04386} {\path{arXiv:1505.04386}},
  \href {https://doi.org/10.1112/S0010437X17007540}
  {\path{doi:10.1112/S0010437X17007540}}.

\bibitem[Hap88]{Hap88}
D.~Happel.
\newblock {\em Triangulated categories in the representation of
  finite-dimensional algebras}, volume 119 of {\em London Math. Soc. Lect. Note
  Ser.}
\newblock Cambridge University Press, Cambridge, 1988.
\newblock \href {https://doi.org/10.1090/S0273-0979-1990-15869-0}
  {\path{doi:10.1090/S0273-0979-1990-15869-0}}.

\bibitem[Jac04]{Jac}
M.~Jacobsson.
\newblock An invariant of link cobordisms from {K}hovanov homology.
\newblock {\em Algebr. Geom. Topol.}, 4(2):1211--1251, 2004.
\newblock \href {https://arxiv.org/abs/math/0206303}
  {\path{arXiv:math/0206303}}, \href {https://doi.org/10.2140/agt.2004.4.1211}
  {\path{doi:10.2140/agt.2004.4.1211}}.

\bibitem[Jan03]{Jantzen}
Jens~Carsten Jantzen.
\newblock {\em Representations of algebraic groups}, volume 107 of {\em
  Mathematical Surveys and Monographs}.
\newblock American Mathematical Society, Providence, RI, second edition, 2003.
\newblock \href {https://doi.org/10.1090/surv/107}
  {\path{doi:10.1090/surv/107}}.

\bibitem[Kho00]{KhJones}
M.~Khovanov.
\newblock A categorification of the {J}ones polynomial.
\newblock {\em Duke Math. J.}, 101(3):359--426, 2000.
\newblock \href {https://arxiv.org/abs/math/9908171}
  {\path{arXiv:math/9908171}}, \href
  {https://doi.org/10.1215/S0012-7094-00-10131-7}
  {\path{doi:10.1215/S0012-7094-00-10131-7}}.

\bibitem[Kho05]{Khcolored}
M.~Khovanov.
\newblock Categorifications of the colored {J}ones polynomial.
\newblock {\em J. Knot Theory Ramifications}, 14:111--130, 2005.
\newblock \href {https://arxiv.org/abs/math.QA/0302060}
  {\path{arXiv:math.QA/0302060}}, \href
  {https://doi.org/10.1142/S0218216505003750}
  {\path{doi:10.1142/S0218216505003750}}.

\bibitem[Kho06]{Khcob}
M.~Khovanov.
\newblock An invariant of tangle cobordisms.
\newblock {\em Trans. Amer. Math. Soc.}, 358:315--327, 2006.
\newblock \href {https://arxiv.org/abs/QA/0207264} {\path{arXiv:QA/0207264}},
  \href {https://doi.org/10.1090/S0002-9947-05-03665-2}
  {\path{doi:10.1090/S0002-9947-05-03665-2}}.

\bibitem[Kho16]{Hopforoots}
M.~Khovanov.
\newblock {H}opfological algebra and categorification at a root of unity: {T}he
  first steps.
\newblock {\em J. Knot Theory Ramifications}, 25(3):359--426, 2016.
\newblock \href {https://arxiv.org/abs/math/0509083}
  {\path{arXiv:math/0509083}}, \href
  {https://doi.org/10.1142/S021821651640006X}
  {\path{doi:10.1142/S021821651640006X}}.

\bibitem[KQ15]{KQ}
M.~Khovanov and Y.~Qi.
\newblock An approach to categorification of some small quantum groups.
\newblock {\em Quantum Topol.}, 6(2):185--311, 2015.
\newblock \href {https://arxiv.org/abs/1208.0616} {\path{arXiv:1208.0616}},
  \href {https://doi.org/10.4171/QT/63} {\path{doi:10.4171/QT/63}}.

\bibitem[KR16]{KRWitt}
M.~Khovanov and L.~Rozansky.
\newblock Positive half of the {W}itt algebra acts on triply graded link
  homology.
\newblock {\em Quantum Topol.}, 7(4):737--795, 2016.
\newblock \href {https://arxiv.org/abs/1305.1642} {\path{arXiv:1305.1642}},
  \href {https://doi.org/10.4171/QT/84} {\path{doi:10.4171/QT/84}}.

\bibitem[LS69]{LaSw}
R.~G. Larson and M.~E. Sweedler.
\newblock An associative orthogonal bilinear form for {H}opf algebras.
\newblock {\em Amer. J. Math.}, 91:75--94, 1969.
\newblock \href {https://doi.org/10.2307/2373270} {\path{doi:10.2307/2373270}}.

\bibitem[Qi14]{QYHopf}
Y.~Qi.
\newblock Hopfological algebra.
\newblock {\em Compos. Math.}, 150(01):1--45, 2014.
\newblock \href {https://arxiv.org/abs/1205.1814} {\path{arXiv:1205.1814}},
  \href {https://doi.org/10.1112/S0010437X13007380}
  {\path{doi:10.1112/S0010437X13007380}}.

\bibitem[QRSW21]{QRSW1}
Y.~Qi, L.-H. Robert, J.~Sussan, and E.~Wagner.
\newblock A categorification of the colored {J}ones polynomial at a root of
  unity, 2021.
\newblock \href {https://arxiv.org/abs/2111.13195} {\path{arXiv:2111.13195}}.

\bibitem[QRSW23]{QRSW3}
Y.~Qi, L.-H. Robert, J.~Sussan, and E.~Wagner.
\newblock Symmetries of equivariant {K}hovanov-{R}ozansky homology, 2023.
\newblock \href {https://arxiv.org/abs/2306.10729} {\path{arXiv:2306.10729}}.

\bibitem[QRSW24]{QRSW2}
Y.~Qi, L.-H. Robert, J.~Sussan, and E.~Wagner.
\newblock Symmetries of $\mathfrak{gl}_{N}$-foams.
\newblock {\em Quantum Topol.}, online first, 2024.
\newblock \href {https://arxiv.org/abs/2212.10106} {\path{arXiv:2212.10106}},
  \href {https://doi.org/10.4171/QT/215} {\path{doi:10.4171/QT/215}}.

\bibitem[QRSW25]{QRSW4}
Y.~Qi, L.-H. Robert, J.~Sussan, and E.~Wagner.
\newblock Remarks on some infinitesimal symmetries of {K}hovanov--{R}ozansky
  homologies in finite characteristic.
\newblock {\em Bulletin of the LMS}, online first, 2025.
\newblock \href {https://arxiv.org/abs/2412.08401} {\path{arXiv:2412.08401}},
  \href {https://doi.org/10.1112/blms.70172} {\path{doi:10.1112/blms.70172}}.

\bibitem[QS22]{QiSussanLink}
Y.~Qi and J.~Sussan.
\newblock On some {$p$}-differential graded link homologies.
\newblock {\em Forum Math. Pi}, 10:Paper No. e26, 58, 2022.
\newblock \href {https://arxiv.org/abs/2009.06498} {\path{arXiv:2009.06498}},
  \href {https://doi.org/10.1017/fmp.2022.19} {\path{doi:10.1017/fmp.2022.19}}.

\bibitem[Ric89]{Ri}
J.~Rickard.
\newblock Derived categories and stable equivalence.
\newblock {\em J. Pure Appl. Algebra}, 61(3):303--317, 1989.

\bibitem[Roz23]{Roz}
F.~Roz.
\newblock An $\mathfrak{sl}_2$ action on link homology of ${T}(2,k)$ torus
  links, 2023.
\newblock \href {https://arxiv.org/abs/2307.01910} {\path{arXiv:2307.01910}}.

\bibitem[STWZ23]{SuttonTubbenhauerWedrichZhu}
L.~Sutton, D.~Tubbenhauer, P.~Wedrich, and J.~Zhu.
\newblock {${\rm SL}_2$} tilting modules in the mixed case.
\newblock {\em Selecta Math. (N.S.)}, 29(3):Paper No. 39, 40, 2023.
\newblock \href {https://doi.org/10.1007/s00029-023-00835-0}
  {\path{doi:10.1007/s00029-023-00835-0}}.

\bibitem[vM25]{VM}
K.R. von Merkl.
\newblock Computing colored {K}hovanov homology, 2025.
\newblock \href {https://arxiv.org/abs/2505.03916} {\path{arXiv:2505.03916}}.

\bibitem[Vog20]{Vog}
P.~Vogel.
\newblock Functoriality of {K}hovanov homology.
\newblock {\em J. Knot Theory Ramifications}, 29(4), 2020.
\newblock \href {https://arxiv.org/abs/1505.04545} {\path{arXiv:1505.04545}},
  \href {https://doi.org/10.1142/S0218216520500200}
  {\path{doi:10.1142/S0218216520500200}}.

\end{thebibliography}

\noindent Y.~Q.: { \sl \small Department of Mathematics, University of Virginia, Charlottesville, VA 22904, USA} \newline \noindent {\tt \small email: yq2dw@virginia.edu}

\vspace{0.1in}

\noindent L.-H.~R.:  {\sl \small Université Clermont Auvergne, LMBP,
    Campus des Cézeaux, 3 place Vasarely, TSA 60026, CS 60026, 63178
    Aubière Cedex, France}
\newline \noindent {\tt \small email: louis-hadrien.robert@uca.fr  }

\vspace{0.1in}

\noindent J.~S.:  {\sl \small Department of Mathematics, CUNY Medgar Evers, Brooklyn, NY, 11225, USA}\newline \noindent {\tt \small email: jsussan@mec.cuny.edu \newline 
\sl \small Mathematics Program, The Graduate Center, CUNY, New York, NY, 10016, USA}

\vspace{0.1in}

\noindent E.~W.: { \sl \small Univ Paris Cité, IMJ-PRG, Univ Paris Sorbonne, UMR 7586 CNRS, F-75013, Paris, France} \newline \noindent {\tt \small email: wagner@imj-prg.fr}

\end{document}